\newtheorem{theorem}{Theorem}[section]
\newtheorem{lemma}[theorem]{Lemma}
\newtheorem{proposition}[theorem]{Proposition}
\newtheorem{corollary}[theorem]{Corollary}
\newtheorem{definition}[theorem]{Definition}
\numberwithin{equation}{section}
\newcommand{\N}{{\mathbb N}}
\newcommand{\Q}{\mathbb{Q}}
\newcommand{\R}{{\mathbb R}}
\newcommand{\cK}{{\mathcal K}} 
\newcommand{\cKo}{{\mathcal K}_{\scriptscriptstyle o}} 
\newcommand{\cKoi}{{\mathcal K}_{\scriptscriptstyle(o)}} 
\newcommand{\cP}{{\mathcal P}} 
\newcommand{\cPo}{{\mathcal P}_{\scriptscriptstyle o}} 
\newcommand{\cPoi}{{\mathcal P}_{\scriptscriptstyle(o)}} 
\newcommand{\cE}{{\mathcal E}}
\newcommand{\cN}{{\mathcal N}}
\newcommand{\cS}{{\mathcal S}}
\newcommand{\cT}{{\mathcal T}}
\DeclareMathOperator{\oZ}{\operatorname{Z}}
\DeclareMathOperator{\oY}{\operatorname{Y}}
\DeclareMathOperator{\oD}{\operatorname{D}}
\DeclareMathOperator{\otZ}{\operatorname{\tilde Z}}
\DeclareMathOperator{\obZ}{\operatorname{\bar Z}}
\newcommand{\oZZb}[2]{\operatorname{\overline{V}}_{#1,#2}}
\newcommand{\rn}{\R^n} 
\newcommand{\sn}{{\mathbb S}^{n-1}} 
\newcommand{\hm}{{\mathcal H}^{n-1}} 
\newcommand{\sd}{\delta_{\mathrm{s}}} 
\newcommand{\al}{\Omega} 
\newcommand{\epi}{\operatorname{epi}} 
\newcommand{\ind}{{\rm\bf I}} 
\newcommand{\dom}{\operatorname{dom}} 
\newcommand{\sq}{\mathbin{\vcenter{\hbox{\rule{.3ex}{.3ex}}}}} 
\newcommand{\infconv}{\mathbin{\Box}} 
\newcommand{\Hess}{{\operatorname{D}}^2} 
\newcommand{\MA}{\text{\rm MA}} 
\newcommand{\sln}{\operatorname{SL}(n)}
\newcommand{\slt}{\operatorname{SL}(2)}
\newcommand{\son}{\operatorname{SO}(n)}
\newcommand{\on}{\operatorname{O}(n)}
\newcommand{\SL}{\operatorname{SL}}
\newcommand{\sont}{{\operatorname{SO}(n)}\ltimes \R^n}
\newcommand{\funRn}{F(\R^n;\R)} 
\newcommand{\fconv}{{\mbox{\rm Conv}_{\text{\rm coe}}(\R^n)}} 
\newcommand{\fconvs}{{\mbox{\rm Conv}_{{\rm sc}}(\R^n)}} 
\newcommand{\fconvx}{{\mbox{\rm Conv}(\R^n)}} 
\newcommand{\fconvf}{{\mbox{\rm Conv}(\R^n; \R)}} 
\newcommand{\LC}{{\mbox{\rm LC}_{\text{\rm coe}}(\R^n)}}
\newcommand{\D}{\,\mathrm{d}}
\newcommand{\E}{\mathrm{e}}
\begin{document}
\title{Valuations on Convex Bodies and Functions}

\author{Monika Ludwig}
\address{Institut f\"ur Diskrete Mathematik und Geometrie,
Technische Universit\"at Wien,
Wiedner Hauptstra\ss e 8-10/1046,
1040 Wien, Austria}
\email{monika.ludwig@tuwien.ac.at}

\author{Fabian Mussnig}
\address{Institut f\"ur Diskrete Mathematik und Geometrie,
Technische Universit\"at Wien,
Wiedner Hauptstra\ss e 8-10/1046,
1040 Wien, Austria}
\email{fabian.mussnig@tuwien.ac.at}

\date{}

\begin{abstract} An introduction to geometric valuation theory is given. The focus is on
classification results for $\sln$ invariant and rigid motion invariant valuations on convex bodies and on convex functions.

\bigskip
{\noindent 2020 AMS subject classification: 52B45 (26B25, 52A20, 52A39, 52A41, 53A15)}
\end{abstract}

\maketitle

\section{Introduction}
\label{sec:intro}
In his Third Problem, Hilbert asked whether, given any two polytopes of equal volume in $\R^3$, it is always possible to dissect the first into finitely many polytopes which can be reassembled to yield the second. In 1900, it was known that the answer to the corresponding question in $\R^2$ is \emph{yes}, but the question was open in higher dimensions. 

Let $\cP^n$ be the set of convex polytopes in $\R^n$. We say that $P\in \cP^n$ is \emph{dissected} into $P_1,\dots, P_m\in\cP^n$ and write
$P=P_1 \sqcup \cdots \sqcup P_m$,
if $P=P_1\cup \cdots\cup P_m$ and the polytopes $P_1, \dots, P_m$ have pairwise disjoint interiors. So, Hilbert's Third Problem asks whether for any $P, Q\in\cP^n$ of equal volume there are dissections 
\[P=P_1 \sqcup \cdots \sqcup P_m,\qquad Q=Q_1 \sqcup \cdots \sqcup Q_m,\]
and rigid motions $\phi_1, \dots, \phi_m$  such that 
\[P_i=\phi_i Q_i\] 
for $1\le i\le m$. We write $P\sim Q$ in this case.

We call a function $\oZ: \cP^n\to\R$ a \emph{valuation} if
\begin{equation*}
\oZ(P)+\oZ(Q)= \oZ(P\cup Q)+\oZ(P\cap Q)
\end{equation*}
for all $P,Q\in \cP^n$ with $P\cup Q\in\cP^n$ (and we set $\oZ(\varnothing):=0$). We call $\oZ$
\emph{simple} if $\oZ(P)=0$ for all polytopes that are not full-dimensional. We say that $\oZ$ is \emph{rigid motion invariant}  if 
\[\oZ(\phi P)= \oZ(P)\] 
for all rigid motions $\phi: \R^n\to\R^n$ and  $P\in \cP^n$.  If $\oZ:\cP^n\to \R$ is a simple, rigid motion invariant valuation, it is not difficult to see that $P\sim Q$ implies that $\oZ(P)=\oZ(Q)$.
Dehn \cite{Dehn1901} constructed a simple, rigid motion invariant valuation, now called Dehn invariant (see Section \ref{sec:rigid}), that is not a multiple of volume. He showed that the Dehn invariant of a regular simplex and a cube of the same volume do not coincide. Thereby he solved Hilbert's Third Problem and  showed that the answer to Hilbert's question is \emph{no} for  $n\ge 3$.

\goodbreak
Blaschke \cite{BlaschkeIntegralH2} took the critical next step by asking for  classification results for $G$ invariant valuations on $\cP^n$ and on the space of convex bodies, $\cK^n$, that is, of non-empty, compact, convex sets in $\R^n$, where $G$ is any group acting on $\R^n$. Blaschke's question is motivated by Felix Klein's Erlangen Program. 
We will discuss some of the results obtained in this tradition, in particular, focusing on the special linear group, $\sln$, and the group of rigid motions, $\sont$, where $\son$ is the group of (orientation preserving) rotations. Often additional regularity assumptions are required, and we consider continuous and upper semicontinuous valuations, where we equip $\cK^n$ and its subspaces with the topology induced by the Hausdorff metric.

In addition to classification results and their applications, structural results for spaces of valuations have attracted much attention in recent years. We refer to the books and surveys \cite{Alesker_Kent, AleskerFu_Barcelona, Bernig_AIG}. Valuations were also considered on various additional spaces, particularly on manifolds (see \cite{Alesker07}). 
Valuations with values in linear spaces and Abelian semigroups, including the space of convex bodies, were also studied (see \cite{Ludwig:ECM}).
We will restrict our attention to real-valued valuations defined on subspaces of $\cK^n$ and to recent results on valuations on spaces of real-valued functions. On a space $X$ of (extended) real-valued functions, a functional $\oZ:X\to\R$ is a \emph{valuation} if
\[\oZ(f)+\oZ(g)=\oZ(f\vee g) + \oZ(f \wedge g)\]
for all $f,g\in X$ such that also their pointwise maximum $f\vee g$ and  pointwise minimum $f\wedge g$ belong to $X$. 
Since we can embed spaces of convex bodies in various function spaces in such a way that unions and intersections of convex bodies correspond to pointwise minima and maxima of functions, this notion generalizes the classical notion.  We will discuss the results on valuations on convex functions.

\section{Basic Properties}
\label{sec:basic}
Let $\cS$ be a class of subsets of $\R^n$. We say that $\oZ:\cS\to \R$ is a \emph{valuation} if
\begin{equation*}\label{eq_val_def}
\oZ(P)+\oZ(Q)= \oZ(P\cup Q)+\oZ(P\cap Q)
\end{equation*}
for all $P,Q\in\cS$ such that $P\cap Q, P\cup Q\in\cS$, and $\oZ(\varnothing)=0$. Given a Borel measure on $\R^n$,  its restriction to $\cK^n$ is clearly a valuation. So, in particular, $n$-dimensional Lebesgue measure, $V_n$, induces a valuation on $\cK^n$. As we will see, there are important valuations that are not induced by measures.

\goodbreak
Let $\cS$ be \emph{intersectional}, that is, if $P,Q\in\cS$, then $P\cap Q\in\cS$. We say that $\oZ: \cS\to \R$ satisfies the \emph{inclusion-exclusion principle} on $\cS$   if 
\begin{equation}\label{eq_in_ex}
\oZ(P_1\cup\dots\cup P_m)=\sum_{\varnothing\neq
      J\subset\{1,\ldots,m\}}(-1)^{\vert J\vert -1} \oZ(P_J)
  \end{equation}
for $P_1, \dots, P_m\in \cS$ and $m\ge1$  whenever $P_1\cup  \cdots\cup P_m\in \cS$. Here $P_J:= \bigcap_{j\in J} P_j$ and $\vert J\vert$ is the cardinality of the set $J$. The inclusion-exclusion principle holds for every valuation on $\cP^n$ and every continuous valuation on $\cK^n$ (see \cite{Klain:Rota, Schneider:CB2}). If $\oZ:\cP^n\to \R$ is, in addition, simple, we have
\begin{equation}\label{eq_finite_add}
\oZ(P_1\sqcup\cdots\sqcup P_m)=\oZ(P_1)+\dots+\oZ(P_m)
\end{equation}
for $P_1, \dots, P_m\in\cP^n$.

For $K,L\in\cK^n$, define the \emph{Minkowski sum} by
\[K+L:=\{x+y: x\in K, y\in L\}.\]
The following lemma describes a way to obtain new valuations from a given one.

\begin{lemma} \label{lem_val_mink_add}
Let $\oZ:\cK^n\to \R$ be a valuation. If $C \in \cK^n$ is a fixed convex body and 
\[
\oZ_{C}(K):=\oZ(K+C),
\]
for $K \in \cK^n$, then $\oZ_{C}$ is a valuation on $\cK^n$.
\end{lemma}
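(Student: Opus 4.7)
The plan is to verify the two defining properties of a valuation for $\oZ_C$: the convention $\oZ_C(\varnothing)=0$ and the additivity relation whenever the union and intersection stay inside $\cK^n$. The first is immediate since $\varnothing + C = \varnothing$, so $\oZ_C(\varnothing) = \oZ(\varnothing) = 0$. For the second, take $K,L \in \cK^n$ such that $K \cup L, K \cap L \in \cK^n$. The idea is to reduce the identity for $\oZ_C$ at the pair $(K,L)$ to the valuation identity for $\oZ$ at the pair $(K+C, L+C)$. For this to work I need the two Minkowski algebra identities
\[(K \cup L) + C = (K+C) \cup (L+C) \qquad \text{and} \qquad (K \cap L) + C = (K+C) \cap (L+C).\]

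The first identity is purely set-theoretic and needs no convexity; I would just unfold both sides. The inclusion $(K \cap L)+C \subseteq (K+C)\cap(L+C)$ in the second identity is also trivial. The content is the reverse inclusion, and this is where I use that $K \cup L$ is convex. Given $x \in (K+C)\cap(L+C)$, write $x = k + c_1 = l + c_2$ with $k \in K$, $l \in L$, $c_1,c_2 \in C$. Since $K \cup L$ is convex, the segment $[k,l]$ lies in $K \cup L$; walking along this segment from $k$ to $l$ and using that $K$ and $L$ are closed, there exists $t^\ast \in [0,1]$ with $p := (1-t^\ast)k + t^\ast l \in K \cap L$. Setting $c := (1-t^\ast)c_1 + t^\ast c_2 \in C$ (by convexity of $C$), one checks
\[p + c = (1-t^\ast)(k+c_1) + t^\ast(l+c_2) = (1-t^\ast)x + t^\ast x = x,\]
so $x \in (K\cap L) + C$, as required.

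With these two identities in hand, I apply the valuation property of $\oZ$ to $K+C$ and $L+C$ (whose union and intersection are convex bodies, being the Minkowski sums $(K \cup L)+C$ and $(K \cap L)+C$ of convex bodies with $C$) and obtain
\[\oZ(K+C) + \oZ(L+C) = \oZ\bigl((K\cup L)+C\bigr) + \oZ\bigl((K\cap L)+C\bigr),\]
which is precisely $\oZ_C(K) + \oZ_C(L) = \oZ_C(K \cup L) + \oZ_C(K \cap L)$.

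The only real obstacle is the second Minkowski identity; everything else is bookkeeping. The argument above for that identity is the classical observation that passing from a pair with convex union to its intersection can be done along a segment, combined with the convexity of $C$ to interpolate the shifts $c_1$ and $c_2$.
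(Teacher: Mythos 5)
Your proof is correct and follows essentially the same approach as the paper: both establish the two Minkowski identities $(K\cup L)+C=(K+C)\cup(L+C)$ and $(K\cap L)+C=(K+C)\cap(L+C)$, with the nontrivial inclusion for the intersection proved by interpolating along the segment from $k$ to $l$ inside $K\cup L$ and correspondingly interpolating $c_1,c_2$ inside $C$, then applying the valuation property of $\oZ$ to the bodies $K+C$ and $L+C$. The only cosmetic difference is that you spell out slightly more explicitly why a $t^\ast$ with $(1-t^\ast)k+t^\ast l\in K\cap L$ exists and you also verify $\oZ_C(\varnothing)=0$, which the paper leaves implicit.
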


\begin{proof}
The following statement is easily seen to hold for subsets $C,K,L\subset \R^n$,
\begin{equation}\label{eq_union}
K\cup L+C=(K+C)\cup(L+C).
\end{equation}
Now, let $C, K,L\in\cK^n$ be such that $K\cup L\in\cK^n$. If $x \in(K+C) \cap(L+C)$, then $x=y+c=z+d$ with $y \in K, z \in L$ and $c, d \in C$. Since $K \cup L$ is convex, there is $t \in[0,1]$ such that $(1-t) y+t z \in K \cap L$ and hence
\begin{equation*}
x =(1-t)(y+c)+t(z+d) 
=(1-t) y+t z+(1-t) c+t d. 
\end{equation*}
Thus $(K+C) \cap(L+C) \subset(K \cap L)+C$. 

\goodbreak
Since it is easy to see that
$(K\cap L)+C\subset (K+C)\cap (L+C)$,
it follows that
\begin{equation}\label{eq_intersection}
(K+C) \cap(L+C) =(K \cap L)+C.
\end{equation}
Applying $\oZ$ to \eqref{eq_union} and to \eqref{eq_intersection} for convex bodies $C, K, L$ and adding, we obtain the statement.
\end{proof}

For $p\in\R$, a functional $\oZ:\cK^n\to \R$ is called \emph{homogeneous of degree $p$} (or \emph{$p$-homogeneous}), if 
\[\oZ(t\,K)= t^p\oZ(K)\] 
for $t>0$ and $K\in\cK^n$.  A functional $\oZ:\cK^n\to \R$ is \emph{increasing} if $K\subset L$ implies that $\oZ(K)\le \oZ(L)$. We will also use corresponding definitions for subsets of $\cK^n$.

The $n$-dimensional volume $V_n: \cK^n \to [0,\infty)$ is a valuation. Lemma \ref{lem_val_mink_add} implies that also $K\mapsto V_n(K+ r\,B^n)$ is a valuation on $\cK^n$ for $r\ge 0$, where $B^n$ is the $n$-dimensional unit ball. Therefore, it follows from the Steiner formula,
\begin{equation}
\label{eq_steiner}
V_n({K+r B^n})=\sum_{j=0}^n r^{n-j}\kappa_{n-j} V_j(K),
\end{equation}
where $r\ge 0$ and $\kappa_j$ is the $j$-dimensional volume of the unit ball in $\R^j$ (with the convention that $\kappa_0=1$), that all intrinsic volumes $V_0, \dots, V_n$ are valuations on $\cK^n$. Recall that all intrinsic volumes are continuous and increasing functionals on $\cK^n$ and that $V_0$ is the Euler characteristic and $V_0(K)=1$ for all $K\in\cK^n$. Also, recall that $V_j(K)$ is the $j$-dimensional volume of $K$ if $K$ is contained in a $j$-dimensional plane and that $V_j$ is $j$-homogeneous.

We will use the following notation. Let $e_1, \dots, e_n$ be the vectors of the canonical basis of $\R^n$. For $x,y\in\R^n$, we write $\langle x,y\rangle$ for the inner product and $\vert x\vert$ for the Euclidean norm of $x$. The convex hull of subsets $A_1, \dots, A_m\subset\R^n$ is written as $[A_1, \dots, A_m]$ and the convex hull of $x_1, \dots, x_m\in\R^n$ as $[x_1, \dots, x_m]$.  If $E\subset \R^n$ is an affine plane in $\R^n$, then $\cK(E)$ and $\cP(E)$ are the sets of convex bodies and convex polytopes, respectively, contained in $E$.

\section{$\sln$ Invariant Valuations}

Blaschke \cite{BlaschkeIntegralH2}  obtained the first classification theorem of invariant valuations on $\cK^n$.

\begin{theorem}[Blaschke]
A functional $\oZ:\cK^n\to \R$ is  a continuous, translation and $\sln$ invariant valuation if and only if  there are constants $c_0, c_n\in\R$ such that 
\[\oZ(K) = c_0 V_0(K)+c_n V_n(K)\]
for every $K\in\cK^n$.
\label{thm_Blaschke}
\end{theorem}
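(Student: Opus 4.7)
The plan is to strip off the Euler-characteristic part of $\oZ$ and then classify the remaining simple valuation on $n$-simplices using Cauchy's functional equation. The argument splits into three steps: handling low-dimensional bodies, treating simplices, and extending to all of $\cK^n$ by triangulation and continuity.

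First, I would show that $\oZ(K) = c_0 := \oZ(\{0\})$ for every $K \in \cK^n$ with $k := \dim K < n$. After a translation we may assume $K$ lies in a $k$-dimensional linear subspace $V \subset \R^n$. For $t > 0$, let $A_t \in \sln$ act as multiplication by $t$ on $V$ and by $t^{-k/(n-k)}$ on $V^\perp$; this has determinant one and satisfies $A_t K = tK$. By $\sln$ invariance, $\oZ(tK) = \oZ(K)$ for every $t > 0$, and letting $t \to 0^+$ together with continuity in the Hausdorff metric gives $\oZ(K) = \oZ(\{0\}) = c_0$. Consequently $\oZ' := \oZ - c_0 V_0$ is a continuous, translation and $\sln$ invariant valuation that vanishes on all bodies of dimension less than $n$.

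Next, I would determine $\oZ'$ on $n$-simplices. After translating one vertex to the origin, the remaining $n$ vertices of an $n$-simplex form a basis of $\R^n$; any two bases whose determinants agree up to sign are related by an element of $\sln$, since transposing two basis vectors flips the sign when needed. Hence $\oZ'(S) = g(V_n(S))$ for some continuous function $g : [0,\infty) \to \R$ with $g(0) = 0$. To identify $g$, pick $\lambda \in (0,1)$ and dissect a simplex $T = [v_0, v_1, \dots, v_n]$ by setting $p := (1-\lambda) v_0 + \lambda v_1$, $T_1 := [v_0, p, v_2, \dots, v_n]$ and $T_2 := [p, v_1, v_2, \dots, v_n]$; then $T_1$ and $T_2$ are $n$-simplices of volumes $\lambda V_n(T)$ and $(1-\lambda) V_n(T)$, while $T_1 \cap T_2 = [p, v_2, \dots, v_n]$ is $(n-1)$-dimensional and so sees $\oZ'$ as zero. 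The valuation property yields
\[
g(a+b) = g(a) + g(b)
\]
for all $a,b > 0$ (set $s = a+b$ and $\lambda = a/(a+b)$), which together with continuity forces $g(t) = c_n t$ for some $c_n \in \R$.

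Finally, I would extend from simplices to $\cK^n$. Every convex polytope admits a triangulation into $n$-simplices with pairwise disjoint interiors, so the finite additivity \eqref{eq_finite_add} of the simple valuation $\oZ'$ gives $\oZ'(P) = c_n V_n(P)$ for every $P \in \cP^n$; density of polytopes in $\cK^n$ together with the continuity of $\oZ'$ and $V_n$ extends this to every $K \in \cK^n$, which finishes the proof. The main obstacle I anticipate is the low-dimensional reduction: one has to construct the volume-preserving dilation in $\sln$ via the orthogonal-splitting trick above, and verify Hausdorff continuity as $tK$ collapses to the origin. After that, the simplex-dissection argument giving Cauchy's equation is routine.
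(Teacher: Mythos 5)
Your proof is correct. The core of the argument is the same as the paper's: subtract off $c_0 V_0$ to get a simple valuation, classify it on $n$-simplices by showing it depends only on volume and satisfies a Cauchy functional equation via the one-cut dissection, and then extend to $\cP^n$ by triangulation and to $\cK^n$ by density and continuity. The one genuine difference is how you dispose of lower-dimensional bodies. The paper actually derives Theorem~\ref{thm_Blaschke} from the stronger Theorem~\ref{thm_polytopes}, which classifies translation and $\sln$ invariant valuations on $\cP^n$ \emph{without} any continuity assumption; there, lower-dimensional polytopes are handled by an induction on dimension, and the $\sln$ scaling map $\phi$ (with $\phi e_1 = te_1$, $\phi e_n = \tfrac1t e_n$) is used to show that the resulting Cauchy function on $(n-1)$-dimensional volumes is constant, hence zero. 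You instead exploit continuity from the outset: the block-diagonal map $A_t \in \sln$ scaling a lower-dimensional subspace by $t$ and its orthogonal complement by $t^{-k/(n-k)}$ lets you write $\oZ(tK)=\oZ(K)$ and then let $t\to 0^+$. That is shorter and avoids the induction, at the price of not yielding the discontinuous (Cauchy-function) version of the classification on polytopes that the paper also records. Both arguments rest on the same underlying $\sln$-scaling trick, just deployed differently: the paper uses it to prove a functional identity, you use it to collapse a body to a point.

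One small point worth making explicit in your write-up of Step 2: the function $g$ is a priori only defined on $(0,\infty)$, since an $n$-simplex has positive volume; you should state that $g$ extends continuously to $[0,\infty)$ with $g(0)=0$ by letting a sequence of $n$-simplices degenerate and invoking the continuity of $\oZ'$ together with the fact that $\oZ'$ vanishes on lower-dimensional bodies. You clearly have this in mind (you write $g(0)=0$), but the justification belongs in the argument.
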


\noindent
In the next section, we will obtain a complete classification of translation invariant valuations in the one-dimensional case, and in the following section, a complete classification of translation and $\sln$ invariant valuations on convex polytopes. Here, no assumptions on the continuity of the valuation are needed. Theorem \ref{thm_Blaschke} will be a simple consequence. The situation is different for valuations on convex bodies, where additional (non-continuous) valuations exist that vanish on convex polytopes. We will describe some of these valuations in Section \ref{sec:affine}.

\subsection{The One-dimensional Case}

We call a function $\zeta: [0,\infty)\to \R$ a \emph{Cauchy function} if it is a solution to the \emph{Cauchy functional equation}, that is,
\[\zeta(x+y)=\zeta(x)+\zeta(y)\]
for every $x,y\in[0,\infty)$. Cauchy functions are well understood and can be completely described (if we assume the axiom of choice) by their values on a Hamel basis. 

\goodbreak
\begin{proposition}\label{prop_trans1}
A functional $\oZ:\cP^1\to \R$ is  a translation invariant valuation if and only if  there are a constant $c_0\in\R$  and a Cauchy function $\zeta:[0,\infty)\to \R$ such that 
\[\oZ(P) = c_0 \,V_0(P)+ \zeta\big(V_1(P)\big)\]
for every $P\in\cP^1$. 
\end{proposition}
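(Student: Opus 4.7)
The plan is to prove both directions separately: the forward direction is a brief verification, and the converse reduces to defining $\zeta$ from the values of $\oZ$ on intervals $[0,t]$ and reading off the Cauchy equation from a carefully chosen decomposition.

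For the ``if'' direction, since $V_0$ and $V_1$ are themselves translation invariant valuations on $\cP^1$, I need only check that $P \mapsto \zeta(V_1(P))$ is a valuation whenever $\zeta$ is Cauchy. Note first that $\zeta(0) = 0$ follows automatically from the Cauchy equation with $x = y = 0$. For $P, Q \in \cP^1$ with $P \cup Q \in \cP^1$, observe that if both $P$ and $Q$ are non-empty then $P \cap Q$ must also be non-empty, since otherwise $P \cup Q$ would be a disjoint union of two intervals and hence fail to be convex. The valuation property of $V_1$ (length) then gives $V_1(P) + V_1(Q) = V_1(P \cup Q) + V_1(P \cap Q)$, and applying $\zeta$ and invoking Cauchy additivity on the right yields $\zeta(V_1(P \cup Q)) + \zeta(V_1(P \cap Q)) = \zeta(V_1(P)) + \zeta(V_1(Q))$. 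The cases in which one of $P, Q$ is empty are trivial, and translation invariance of the formula is inherited from $V_0$ and $V_1$.

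For the converse, let $\oZ$ be a translation invariant valuation on $\cP^1$. Every non-empty $P \in \cP^1$ has the form $[a,b]$ with $a \le b$, so translation invariance gives $\oZ([a,b]) = \oZ([0, b-a])$; hence $\oZ$ on non-empty polytopes depends only on length. Set $c_0 := \oZ(\{0\})$ and $\zeta(t) := \oZ([0,t]) - c_0$ for $t \ge 0$, noting $\zeta(0) = 0$. By construction, $\oZ(P) = c_0 V_0(P) + \zeta(V_1(P))$ on every non-empty polytope, and the empty case holds trivially. To see that $\zeta$ is Cauchy, I apply the valuation property to the decomposition $[0, s+t] = [0,s] \cup [s, s+t]$, whose intersection is the singleton $\{s\}$: this yields
\[
\oZ([0,s]) + \oZ([s, s+t]) = \oZ([0, s+t]) + \oZ(\{s\}).
\]
Rewriting each term via translation invariance and the definition of $\zeta$ gives $(\zeta(s) + c_0) + (\zeta(t) + c_0) = (\zeta(s+t) + c_0) + c_0$, and the constants cancel to leave $\zeta(s) + \zeta(t) = \zeta(s+t)$.

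The main substantive step is the choice of decomposition $[0, s+t] = [0,s] \cup [s, s+t]$ which translates the valuation identity directly into the Cauchy equation; everything else is bookkeeping. The only subtlety to watch for is uniform treatment of degenerate polytopes (points and the empty set), which is handled automatically by the convention $\zeta(0) = 0$.
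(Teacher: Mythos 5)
Your proof is correct and follows essentially the same route as the paper: both use the decomposition $[0,s+t]=[0,s]\cup[s,s+t]$ together with translation invariance to read off the Cauchy equation for $\zeta(t)=\oZ([0,t])-c_0$. The only cosmetic difference is that the paper first subtracts off $c_0\,V_0$ to obtain a simple auxiliary valuation $\otZ$ before performing the computation, whereas you carry the constant $c_0$ along explicitly; you also spell out the (straightforward) ``if'' direction, which the paper omits.
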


\begin{proof}
Set $c_0:=\oZ(\{0\})$ and define $\otZ: \cP^1\to \R$ by
\[\otZ(P):=\oZ(P)-c_0 V_0(P).\]
Note that $\otZ$ is a simple, translation invariant valuation on $\cP^1$. Define $\zeta:[0,\infty)\to\R$ by setting
\[\zeta(x):= \otZ([0,x]).\]
Since $\otZ$ is a simple, translation invariant valuation,
\[\zeta(x+y)= \otZ([0,x+y))=\otZ([0,x])+\otZ([x,x+y])=\zeta(x)+\zeta(y)\]
for every $x,y\in[0,\infty)$. Hence $\zeta$ is a Cauchy function. Using that $\otZ$ is translation invariant, we get $\otZ(P)=\zeta(V_1(P))$ for $P\in\cP^1$, which concludes the proof.
\end{proof}

Since every continuous Cauchy function is linear, we obtain the following result.

\begin{corollary}\label{cor_one}
A functional $\oZ:\cP^1\to \R$ is  a continuous and translation invariant valuation if and only if  there are constants $c_0, c_1\in\R$ such that 
\[\oZ(P) = c_0 V_0(P) + c_1 V_1(P)\]
for every $P\in\cP^1$. 
\end{corollary}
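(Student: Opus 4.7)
The plan is to derive the corollary as a direct consequence of Proposition \ref{prop_trans1}, using continuity to rigidify the Cauchy function into a linear one.

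First I would observe that if $\oZ:\cP^1\to\R$ is continuous and translation invariant, then by Proposition \ref{prop_trans1} there exist a constant $c_0\in\R$ and a Cauchy function $\zeta:[0,\infty)\to\R$ such that
\[\oZ(P)=c_0\,V_0(P)+\zeta\bigl(V_1(P)\bigr)\]
for every $P\in\cP^1$. Since $V_0$ is continuous on $\cP^1$ (indeed $V_0\equiv 1$) and $V_1$ is continuous with respect to the Hausdorff metric, continuity of $\oZ$ forces $\zeta$ to be continuous on $[0,\infty)$: for any $x_k\to x$ in $[0,\infty)$, the segments $[0,x_k]$ converge to $[0,x]$ in the Hausdorff metric, so $\zeta(x_k)=\oZ([0,x_k])-c_0$ converges to $\oZ([0,x])-c_0=\zeta(x)$.

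Next I would invoke the classical fact that any Cauchy function $\zeta:[0,\infty)\to\R$ which is continuous (indeed, measurable or bounded on a set of positive measure suffices) is linear, i.e., $\zeta(x)=c_1 x$ for some $c_1\in\R$. The short argument is that from additivity one gets $\zeta(qx)=q\zeta(x)$ for all rational $q\ge 0$, so with $c_1:=\zeta(1)$ one has $\zeta(q)=c_1 q$ on $\Q\cap[0,\infty)$, and continuity then extends this to all of $[0,\infty)$.

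Substituting $\zeta(x)=c_1 x$ back into the representation yields
\[\oZ(P)=c_0\,V_0(P)+c_1\,V_1(P),\]
which is the desired form. Conversely, both $V_0$ and $V_1$ are continuous, translation invariant valuations on $\cP^1$, so any real linear combination is as well. The only subtlety worth flagging is the continuity-implies-linearity step for Cauchy functions, but that is standard; no further obstacle arises.
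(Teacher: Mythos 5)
Your proof is correct and follows exactly the route the paper intends: apply Proposition~\ref{prop_trans1} to get $\oZ(P)=c_0\,V_0(P)+\zeta(V_1(P))$ with $\zeta$ a Cauchy function, then use continuity of $\oZ$ on the segments $[0,x]$ to conclude that $\zeta$ is a continuous Cauchy function and hence linear. The paper compresses this to a single sentence (``Since every continuous Cauchy function is linear, \dots''), so your version is just a spelled-out form of the same argument.
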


\noindent
A corresponding classification result holds for upper semicontinuous and translation invariant valuations on $\cP^1$. Such a result also holds for Borel measurable and translation invariant valuations on $\cP^1$, since every Borel measurable Cauchy function is linear.

\goodbreak
\subsection{$\sln$ Invariant Valuations on Convex Polytopes}

The following result gives a complete classification of translation and $\sln$ invariant valuations on polytopes.

\begin{theorem}
A functional $\oZ\colon\cP^n\to \R$ is  a translation and $\sln$ invariant valuation if and only if  there are a constant $c_0\in\R$  and a Cauchy function $\zeta\colon[0,\infty)\to \R$ such that 
\[\oZ(P) = c_0 V_0(P)+ \zeta\big(V_n(P)\big)\]
for every $P\in\cP^n$. \label{thm_polytopes}
\end{theorem}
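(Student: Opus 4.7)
The plan is to argue by induction on $n$; the base case $n=1$ is Proposition \ref{prop_trans1}, since $\SL(1) = \{1\}$ makes $\sln$-invariance automatic in dimension one. Sufficiency is routine: $V_0$ and $V_n$ are clearly translation and $\sln$ invariant valuations, and since $V_n$ is simple and any Cauchy $\zeta$ satisfies $\zeta(0)=0$ and is additive, $P \mapsto \zeta(V_n(P))$ satisfies the valuation identity (splitting cases on whether $P\cap Q$ is $n$-dimensional).

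For necessity, set $c_0 := \oZ(\{0\})$ and $\otZ(P) := \oZ(P) - c_0 V_0(P)$, so that $\otZ$ is a translation and $\sln$ invariant valuation with $\otZ(\{x\}) = 0$ for every $x$. The key step is to show that $\otZ$ is \emph{simple}, which I would establish by a secondary induction on the dimension $k$ of the polytope, from $k=0$ up to $k=n-1$. For a $(k+1)$-dimensional polytope $P$ with $k+1 \leq n-1$, one translates $P$ into a linear subspace $E$ of dimension $k+1$ and considers the restriction $\otZ|_{\cP(E)}$. This restriction is translation invariant, and it is $\SL(E)$-invariant because any $\psi \in \SL(E)$ extends to an element of $\sln$ by the identity on $E^\perp$. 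Combining the secondary inductive hypothesis (vanishing on dimensions $\leq k$) with the outer inductive hypothesis applied to $E$ yields a Cauchy function $\zeta_E$ with $\otZ|_{\cP(E)}(P) = \zeta_E(V_{k+1}(P))$. Now comes the scaling trick: since $n-k-1 \geq 1$, there exists $\phi \in \sln$ acting as multiplication by $2$ on $E$ and by $2^{-(k+1)/(n-k-1)}$ on $E^\perp$. Invariance under $\phi$ gives $\zeta_E(t) = \zeta_E(2^{k+1} t)$, while the Cauchy property forces $\zeta_E(2^{k+1} t) = 2^{k+1} \zeta_E(t)$, so $\zeta_E \equiv 0$.

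Once simplicity is established, define $\zeta(t) := \otZ(S)$ for any $n$-simplex $S$ with $V_n(S) = t$; this is unambiguous because $\sln \ltimes \R^n$ acts transitively on the set of $n$-simplices of a given volume. To verify the Cauchy property, let $a, b \geq 0$ and choose an $n$-simplex $S$ of volume $a+b$ with vertices $v_0, \ldots, v_n$, then cut it by the hyperplane through $v_2, \ldots, v_n$ and the point $p := \tfrac{b}{a+b}\, v_0 + \tfrac{a}{a+b}\, v_1$. The two resulting pieces $[v_0, p, v_2, \dots, v_n]$ and $[p, v_1, v_2, \dots, v_n]$ are themselves $n$-simplices, of volumes $a$ and $b$, so simplicity gives $\zeta(a+b) = \zeta(a) + \zeta(b)$. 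Finally, any $n$-dimensional $P \in \cP^n$ admits a simplicial dissection $P = S^{(1)} \sqcup \cdots \sqcup S^{(m)}$, so \eqref{eq_finite_add} combined with iterated additivity of $\zeta$ yields $\otZ(P) = \sum_{i=1}^m \zeta(V_n(S^{(i)})) = \zeta(V_n(P))$; simplicity extends this identity to the lower-dimensional polytopes.

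I expect the main obstacle to be the simplicity step, whose success depends crucially on the ability to compensate a scaling on $E$ by a scaling on $E^\perp$ — a manoeuvre that fails precisely when $k = n$, which explains why the final classification retains both $V_0$ and $V_n$ and no intermediate intrinsic volumes appear.
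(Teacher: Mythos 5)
Your argument is correct and matches the paper's proof in structure and in all its key ideas: the outer induction on $n$, the $\sln$ rescaling that trades a dilation of a subspace $E$ for a compensating contraction of $E^\perp$ to kill the Cauchy function on $\cP(E)$, the definition of $\zeta$ on $n$-simplices via equiaffine transitivity, the Cauchy verification by slicing an $n$-simplex along a hyperplane through a facet and a subdividing point, and the final passage to arbitrary polytopes by simplicial dissection and \eqref{eq_finite_add}. The only inefficiency is your secondary induction on the polytope's dimension $k$: the paper applies the outer induction hypothesis once to an $(n-1)$-dimensional hyperplane $E$, obtains $\otZ|_{\cP(E)}=\tilde\zeta(V_{n-1})$ with $\tilde\zeta$ Cauchy, and a single $\sln$ map fixing $E$ (scaling one direction of $E$ by $t$ and the normal by $1/t$) shows $\tilde\zeta\equiv 0$; this annihilates all polytopes of dimension $\le n-1$ at once, since every such polytope sits (after translation) inside some hyperplane.
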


\begin{proof}
Set $c_0:=\oZ(\{0\})$ and define $\otZ: \cP^n\to \R$ by
\[\otZ(P):=\oZ(P)-c_0 V_0(P).\]
Note that $\otZ$ is a translation invariant valuation on $\cP^n$ that vanishes on singletons, that is, sets of the form $\{x\}$ with $x\in\R^n$. We show that there is a Cauchy function $\zeta: [0,\infty)\to\R$  such that
\begin{equation}\label{eq_proof}
\otZ(P)= \zeta(V_n(P))
\end{equation}
for every $P\in\cP^n$.

\goodbreak
We use induction on the dimension $n$. By Proposition \ref{prop_trans1}, the statement \eqref{eq_proof} is true for $n=1$. Let $n\ge2$. Assume that it is true for valuations on $\cP^{n-1}$. Hence it is also true for valuations on $\cP(E)$ with $E$ any hyperplane in $\R^n$. The induction assumption implies that there is a Cauchy function $\tilde\zeta:[0,\infty)\to \R$ such that
\[\otZ(P)=\tilde\zeta(V_{n-1}(P))\]
for every $P\in\cP(E)$.  Note that the invariance properties of $\otZ$ imply that $\tilde\zeta$ does not depend on $E$. Since $\otZ$ vanishes on singletons, we have $\tilde\zeta(0)=0$. Let $E$ be spanned by the first $(n-1)$ basis vectors $e_1, \dots, e_{n-1}$ and define $\phi\in\sln$ by   setting $\phi e_1= t\, e_1$ with $t>0$ and $\phi e_j=e_j$ for $1<j<n$ and $\phi e_n=\frac1t e_n$. Since $\phi E=E$, it follows from the $\sln$ invariance of $\otZ$ that
\[\tilde\zeta(t)=\otZ(\phi [0,1]^{n-1})=\otZ([0,1]^{n-1})= \tilde\zeta(1)\]
for every $t>0$. 
This implies that $\tilde\zeta \equiv0$ and shows that $\otZ$ is simple. Thus it suffices to show that \eqref{eq_proof} holds for every  simple, translation and $\sln$ invariant valuation $\otZ:\cP^n\to \R$.

\goodbreak
Define $\zeta:[0,\infty)\to\R$ by setting
\[\zeta(s):= \otZ(\sqrt[n]{s\,n!}\, [0, e_1, \dots, e_n])\]
and note that 
\[\otZ(S)= \zeta(V_n(S))\]
for every simplex $S\in\cP^n$, as the valuation $\otZ$ is simple, translation and $\sln$ invariant and every $n$-dimensional simplex is a translate  of an $\sln$ image of the simplex $\sqrt[n]{s\,n!}\,[0, e_1, \dots, e_n]$ for some $s>0$. For $0<r<1$, we dissect the $n$-dimensional simplex with vertices $v_0, \dots, v_n\in\R^n$ into the $n$-dimensional simplices $T_1$ with vertices $v_0, r \,v_0+(1-r) v_1, v_2, \dots, v_n$ and $T_2$ with the vertices $(1-r)v_0 +r\,v_1, v_1, v_2, \dots, v_n$. 
Since $\otZ$ is a simple valuation,
\begin{equation}\label{eq_simplex_valu}
\otZ(T_1\cup T_2)=\otZ(T_1)+\otZ(T_2).
\end{equation}

\begin{figure}
	\centering
	\tdplotsetmaincoords{80}{35}
	\begin{tikzpicture}[scale=4.5,line join=bevel,tdplot_main_coords]
		\tikzset{xzplane/.style={canvas is xz plane at y=#1}}
		\tikzset{yzplane/.style={canvas is yz plane at x=#1}}
		\tikzset{xyplane/.style={canvas is xy plane at z=#1}}
		
		\pgfmathsetmacro\arr{0.5}
		
		\draw [thick,densely dashed] (0,0,0) -- (0,2.5,0);
		\draw [thick] (0,0,0) -- (2,0,0) -- (0.2,0,0.9) -- (0,0,0);
		\draw [thick] (2,0,0) -- (0,2.5,0) -- (0.2,0,0.9);
		
		\draw (0,0,0) node[anchor=north]{\fontsize{10}{10}\selectfont $v_0$};
		\draw (2,0,0) node[anchor=north]{\fontsize{10}{10}\selectfont $v_1$};
		\draw (0,2.5,0) node[anchor=south]{\fontsize{10}{10}\selectfont $v_2$};
		\draw (0.2,0,0.9) node[anchor=east]{\fontsize{10}{10}\selectfont $v_3$};
	
		\draw (2*\arr,0,0) -- (0.2,0,0.9);
		\draw [dashed] (2*\arr,0,0) -- (0,2.5,0);
		
		\draw (2*\arr,0,0) node[anchor=north]{\fontsize{10}{10}\selectfont $(1-r)v_0+ r v_1$};
	\end{tikzpicture}
	\caption{Decomposition of $[v_0,\ldots,v_n]$ into $T_1$ and $T_2$.}
\end{figure}

Choosing $v_0:=0$ and $v_j:=\sqrt[n]{(s+t)n!}\,e_j$ for $j=1, \dots, n$ as well as $r:=s/(s+t)$, we obtain from \eqref{eq_simplex_valu} that
\[\zeta(s+t)= \zeta(s)+\zeta(t)\]
for every $s,t\in(0,\infty)$. Hence, $\zeta$ is a Cauchy function. By \eqref{eq_finite_add} and since we can dissect every polytope into simplices, we conclude that \eqref{eq_proof} holds for every $P\in\cP^n$.
\end{proof}

\goodbreak
Properties of Cauchy functions immediately give the following result, which, in turn, implies Theorem~\ref{thm_Blaschke}.

\begin{corollary}
A functional $\oZ:\cP^n\to \R$ is  a continuous, translation and $\sln$ invariant valuation if and only if  there are constants $c_0, c_n\in\R$ such that 
\[\oZ(P) = c_0 V_0(P)+c_n V_n(P)\]
for every $P\in\cP^n$. 
\end{corollary}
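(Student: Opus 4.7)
The plan is to derive this corollary as a direct consequence of Theorem \ref{thm_polytopes}. The sufficiency direction is immediate: both $V_0$ (the Euler characteristic) and $V_n$ (Lebesgue measure) are continuous, translation and $\sln$ invariant valuations on $\cP^n$, so any linear combination $c_0 V_0 + c_n V_n$ is such a valuation as well.

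For the necessity direction, suppose $\oZ\colon\cP^n\to\R$ is a continuous, translation and $\sln$ invariant valuation. By Theorem \ref{thm_polytopes}, there exist $c_0\in\R$ and a Cauchy function $\zeta\colon[0,\infty)\to\R$ such that $\oZ(P) = c_0 V_0(P) + \zeta(V_n(P))$ for every $P\in\cP^n$. Thus it suffices to show that $\zeta$ is linear, that is, $\zeta(s)=c_n s$ for some constant $c_n\in\R$.

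To see that $\zeta$ is continuous, consider the one-parameter family of simplices $S_s := \sqrt[n]{s\,n!}\,[0,e_1,\dots,e_n]$ for $s\ge 0$, which have $V_n(S_s)=s$ and depend continuously on $s$ with respect to the Hausdorff metric. From the representation above one gets
\[
\zeta(s) = \oZ(S_s) - c_0,
\]
and since $\oZ$ is continuous, $\zeta$ is continuous on $[0,\infty)$. A standard fact (already invoked in the proof of Corollary \ref{cor_one}) asserts that every continuous Cauchy function on $[0,\infty)$ is linear, so $\zeta(s)=c_n\,s$ for some $c_n\in\R$, which completes the proof.

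There is essentially no hard step here: the classification is packaged in Theorem \ref{thm_polytopes} and the only additional input is the elementary fact that continuous Cauchy functions are linear. The one point worth being careful about is to exhibit an explicit continuous family of polytopes whose $n$-volume sweeps out $[0,\infty)$, so that the continuity of $\oZ$ genuinely transfers to $\zeta$; the rescaled standard simplex above serves this purpose cleanly.
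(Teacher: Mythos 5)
Your proof is correct and follows the same route the paper intends: the corollary is deduced from Theorem~\ref{thm_polytopes} together with the fact that continuous Cauchy functions are linear. The paper leaves the continuity of $\zeta$ implicit, while you usefully make it explicit via the continuous family $S_s=\sqrt[n]{s\,n!}\,[0,e_1,\dots,e_n]$; this is a correct and clean way to transfer the continuity of $\oZ$ to $\zeta$.
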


\noindent
Corresponding statements  hold for upper semicontinuous valuations and for Borel measurable valuations.

\goodbreak
We remark that classification results for $\sln$ invariant valuations are also known without assuming translation invariance (see \cite{LudwigReitzner_AP}). In particular, the following result holds. Let $\cPo^n$ be the space of convex polytopes containing the origin. 

\begin{theorem}
\label{thm:blaschke_pon}
A functional $\oZ:\cPo^n\to \R$ is  a continuous, $\sln$ invariant valuation if and only if  there are constants $c_0, c_n\in\R$ such that 
\[\oZ(P) = c_0 V_0(P)+c_n V_n(P)\]
for every $P\in\cPo^n$. 
\end{theorem}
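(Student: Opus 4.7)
The sufficiency is immediate since $V_0$ and $V_n$ are both continuous and $\sln$ invariant valuations on $\cPo^n$. For the converse, I would set $c_0 := \oZ(\{0\})$ and $\otZ := \oZ - c_0 V_0$, so that $\otZ$ is a continuous $\sln$ invariant valuation on $\cPo^n$ with $\otZ(\{0\}) = 0$. The task reduces to producing $c_n\in \R$ with $\otZ = c_n V_n$.

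The first step is to show that $\otZ$ is simple, i.e., $\otZ(P) = 0$ whenever $\dim P = k < n$. Since $0 \in P$, the affine hull of $P$ is a linear subspace, and applying a suitable element of $\sln$ I may assume it equals $\operatorname{span}\{e_1, \dots, e_k\}$. For $t\in(0,1]$ the diagonal map $\phi_t\in\sln$ defined by $\phi_t e_j = t\,e_j$ for $j\le k$ and $\phi_t e_j = t^{-k/(n-k)} e_j$ for $j>k$ has determinant $1$, and $\phi_t P \to \{0\}$ in the Hausdorff metric as $t\to 0^+$. By $\sln$ invariance and continuity, $\otZ(P) = \otZ(\phi_t P) \to \otZ(\{0\}) = 0$.

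Since $\sln$ acts transitively on the set of $n$-simplices with $0$ as a vertex and a given positive volume, the function $\zeta:[0,\infty)\to\R$ defined by $\zeta(s) := \otZ(\sqrt[n]{s\,n!}\,[0, e_1, \dots, e_n])$ satisfies $\otZ(S) = \zeta(V_n(S))$ for every such simplex $S$. To show $\zeta$ is Cauchy, I would mimic the dissection from the proof of Theorem \ref{thm_polytopes}: for $v_j := \sqrt[n]{(s+t)\,n!}\,e_j$, splitting $[0, v_1,\ldots,v_n]$ along $w := \tfrac{t}{s+t} v_1 + \tfrac{s}{s+t} v_2$ produces two $n$-simplices with vertex $0$ of volumes $s$ and $t$ respectively. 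Simplicity and equation \eqref{eq_finite_add} give $\zeta(s+t) = \zeta(s) + \zeta(t)$, and continuity of $\otZ$ forces $\zeta$ continuous, hence linear: $\zeta(s) = c_n\, s$.

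For the final step, any full-dimensional $P\in\cPo^n$ can be dissected into $n$-simplices $T_1,\ldots, T_m$ each having $0$ as a vertex: triangulate $\partial P$ into $(n{-}1)$-simplices and take cones from $0$ over those whose affine hull avoids $0$. Applying \eqref{eq_finite_add} and the previous step yields $\otZ(P) = \sum_i \zeta(V_n(T_i)) = c_n V_n(P)$, and combined with simplicity this shows $\otZ = c_n V_n$ on all of $\cPo^n$. The main obstacle is the simplicity argument: without translation invariance one cannot push a lower-dimensional polytope out of its affine hull, so everything hinges on exploiting that $0\in P$ to construct a volume-preserving $\sln$ contraction of $\operatorname{aff}(P)$ to the origin.
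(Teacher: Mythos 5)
Your proposal is essentially correct, and it is worth noting that the survey itself does not prove Theorem~\ref{thm:blaschke_pon}: the result is attributed to \cite{LudwigReitzner_AP} with no argument supplied. What you have done is adapt the paper's proof of Theorem~\ref{thm_polytopes} (the translation-and-$\sln$-invariant case on $\cP^n$) to the origin-anchored setting, and the two modifications you make are exactly the ones the situation demands: (i) simplicity is forced by an $\sln$ contraction toward $0$ rather than by translation invariance, which works precisely because $0\in P$ makes $\operatorname{aff}(P)$ a linear subspace so that the volume-compensating diagonal map $\phi_t$ sends $\cPo^n$ to itself and collapses $P$ to $\{0\}$; and (ii) the simplex dissection is performed along the edge $[v_1,v_2]$ rather than $[v_0,v_1]$, so that both pieces $[0,v_1,w,v_3,\ldots,v_n]$ and $[0,w,v_2,\ldots,v_n]$ retain $0$ as a vertex (a quick determinant computation confirms their volumes are $s$ and $t$ as you claim). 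This step requires $n\ge2$, which should be stated explicitly since for $n=1$ the $\SL(1)$-invariance is vacuous and the statement fails.

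The one place you should be more careful is the invocation of \eqref{eq_finite_add} in the final step. As stated in Section~2, \eqref{eq_finite_add} is derived from the inclusion--exclusion principle \emph{on $\cP^n$}, and it is not automatic that inclusion--exclusion transfers to the sublattice $\cPo^n$: a dissection $P=T_1\sqcup\cdots\sqcup T_m$ into simplices with vertex $0$ does not in general admit an ordering with all partial unions $T_1\cup\cdots\cup T_j$ convex (already for a square split into four cones from the centre, three adjacent cones give a non-convex pentagon). The cleanest repair is to bypass \eqref{eq_finite_add} entirely and cut by hyperplanes through the origin: for each $(n-2)$-simplex $\tau$ in the chosen triangulation of $\partial P$, take the hyperplane $H_\tau$ spanned by $0$ and $\tau$; since $\otZ$ is simple, repeatedly splitting by these hyperplanes gives a telescoping identity
\[
\otZ(Q)=\otZ(Q\cap H_\tau^+)+\otZ(Q\cap H_\tau^-)
\]
for every $Q\in\cPo^n$, and all intermediate pieces contain $0$ because every $H_\tau$ does. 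Applying this to both $P$ and each $T_i$ (the cells of the resulting central arrangement refine every $T_i$) yields $\otZ(P)=\sum_i\otZ(T_i)$ without appealing to inclusion--exclusion. With that substitution, the argument is complete and sound.
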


\noindent
On $\cP^n$, there are additional $\sln$ invariant valuations. In particular, $P\mapsto V_n([0,P])$ is such a valuation (see \cite{LudwigReitzner_AP} for a complete classification). On $\cPoi^n$, the space of convex polytopes containing the origin in their interiors, $P\mapsto V_n(P^\circ)$, the functional that associates with $P$ the volume of its polar body, is an $\sln$ variant valuation. A complete classification of $\sln$ invariant valuations on $\cPoi^n$ was established by Haberl and Parapatits \cite{Haberl:Parapatits_centro}.

\subsection{Affine Surface Area}\label{sec:affine}

While we have established a complete classification of translation and $\sln$ invariant valuations on $\cP^n$, such a result is not known on $\cK^n$, and there are additional valuations on $\cK^n$ that vanish on $\cP^n$. The classical \emph{affine surface area} $\Omega: \cK^n\to \R$ is such a valuation. It is defined by
\begin{equation}\label{eq_affine surface area}
\Omega(K)= \int_{\partial K} \kappa(K,x)^{\frac1{n+1}}\,\D \hm(x),
\end{equation}
where $\kappa(K,x)$ is the generalized Gaussian curvature of $\partial K$ at $x$ and integration is with respect to the $(n-1)$-dimensional Hausdorff measure $\hm$ on the boundary, $\partial K$, of $K$.  By a classical result of Aleksandrov, the boundary of a convex body is twice differentiable almost everywhere and
hence $\kappa(K,x)$ is defined almost everywhere and it can be shown that $x\mapsto \kappa(K,x)$ is measurable. 
We remark that the generalized Gaussian curvature is the density of the absolutely continuous part of the curvature measure $C_0(K,\cdot)$, 
where 
$C_0(K,B):= \hm(\nu_K(B))$ for a Borel set $B\subset\partial K$, and $\nu_K$ is the spherical image map that assigns to $x\in\partial K$ the set  of all unit normal vectors of supporting hyperplanes of $K$ containing $x$ (see \cite[Chapter 4]{Schneider:CB2}).
Hence
\[\int_{\partial K} \kappa(K,x)\, \D\hm (x) \le \hm (\partial B^n)= n\kappa_n\]
for every $K\in\cK^n$, and
by Jensen's inequality,
\begin{equation}\label{eq_upper}
\int_{\partial K}\kappa(K,x)^{\frac1{n+1}}\,\D \hm(x)\le \big(n\kappa_n\big)^{\frac1{n+1}} 
\big(\int_{\partial K}  \D \hm(x)\big)^\frac{n}{n+1}.
\end{equation}
This implies that the integral in \eqref{eq_affine surface area} is finite for every  $K\in\cK^n$. 

\goodbreak
The definition of affine surface area for convex bodies with smooth boundary is classical and goes back to Blaschke and Pick  \cite{Blaschke}. They established that $\Omega$ is \emph{equi-affine invariant}, that is, $\Omega$ is translation and $\sln$ invariant.
The extension to general convex bodies is more recent and due to Leichtwei\ss \ \cite{Leichtweiss86b}, Lutwak \cite{Lutwak91} and Sch\"utt and Werner \cite{Schuett:Werner90}.  Lutwak \cite{Lutwak91} proved that $\Omega$ is upper semicontinuous on $\cK^n$, that is, for every sequence of convex bodies $K_j$ converging to a convex body $K$, we have
\[\Omega(K)\ge \limsup_{j\to\infty} \Omega(K_j).\]
It follows from \eqref{eq_affine surface area} that $\Omega$ vanishes on polytopes and is therefore not continuous. The valuation property of $\Omega$ on $\cK^n$ follows directly from \eqref{eq_affine surface area}.

\goodbreak
Note that $\Omega$ is translation invariant and that $\Omega(K)=0$ if $K$ is lower dimensional. Hence we may assume in the following that the origin is an interior point of $K$.
Clearly, \eqref{eq_affine surface area} can be rewritten as
\begin{equation}\label{eq_affine surface area2}
\Omega(K)= \int_{\partial K} \kappa_0(K,x)^{\frac1{n+1}}\,\D V_K(x),
\end{equation}
where 
\[\kappa_0(K,x):= \frac{\kappa(K,x)}{\langle x, n_K(x)\rangle^{n+1}}\]
and
\[\D V_K(x):= \langle x, n_K(x)\rangle \,\D \hm(x).\]
Here, $n_K(x)$ is the unit outer normal vector of $K$ at $x$, which is uniquely defined almost everywhere on $\partial K$, and $\langle x, n_K(x)\rangle$ is the distance to the origin of the tangent hyperplane to $K$ at such $x$. In \eqref{eq_affine surface area2}, it is easy to see that $\Omega$ is $\sln$ invariant. Indeed, for a Borel set $B\subset \partial K$, using the fact that the volume of a cone is the product of its height divided by $n$ and the $(n-1)$-dimensional volume of its base, we see that  $\tfrac1n V_K(B)$ is just the $n$-dimensional volume of the set $\{ t\, B: t\in [0,1]\}$. Consequently, 
\[V_{\phi K}(\phi B)= V_K(B)\]
for every $\phi\in\sln$ and every Borel set $B\subset \partial K$. Moreover,
\begin{equation*}
\kappa_0(\phi K,\phi x)= \kappa_0(K,x)
\end{equation*}
for every $\phi\in\sln$ and every $x\in \partial K$ where $\kappa_0(K,x)>0$. This is a simple consequence of the following geometric interpretation of $\kappa_0(K,x)$,
\begin{equation*}
\kappa_0(K,x)= \frac{\kappa_n^2}{V_n(E_K(x))^2},
\end{equation*}
where  $E_K(x)$ is the unique centered ellipsoid that osculates $K$ at $x$. We remark that 
\begin{equation}\label{eq_Orlicz}
K \mapsto \int_{\partial K} \zeta(\kappa_0(K,x))\,\D  V_K(x)
\end{equation}
is an $\sln$ invariant valuation on $\cKoi^n$, the set of convex bodies containing the origin in their interiors when $\zeta:[0,\infty)\to [0,\infty)$ is a suitable continuous function. The functionals defined in \eqref{eq_Orlicz} are called \emph{Orlicz affine surface areas}. If $\zeta(t):=t^p$  for $t>0$ with $p>-n$, the so-called \emph{$L_p$ affine surface area} of $K$ is obtained, which was introduced by Lutwak \cite{Lutwak96}. Classification results for $\sln$ invariant valuations on $\cKoi^n$ were established in \cite{Haberl:Parapatits_centro,Ludwig:origin,Ludwig:Reitzner2} and characterizations of $L_p$ and Orlicz affine surface areas in \cite{Ludwig:Reitzner2}.

\goodbreak
The following result  from  \cite{Ludwig:affinelength,Ludwig:Reitzner} strengthens  Theorem \ref{thm_Blaschke} and establishes a characterization of affine surface area.

\goodbreak
\begin{theorem}
A functional $\oZ:\cK^n\to \R$ is  an upper semicontinuous, translation and $\sln$ invariant valuation if and only if  there are constants $c_0, c_n\in\R$  and $c\ge 0$  such that 
\[\oZ(K)= c_0 V_0(K) +c_n V_n(K)+ c\,\Omega(K)\]
for every $K\in\cK^n$. \label{thm_asa}
\end{theorem}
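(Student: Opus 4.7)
The ``if'' direction is immediate from what precedes: $V_0,V_n$ are continuous, translation and $\sln$ invariant valuations; $\Omega$ was verified to be an upper semicontinuous, translation and $\sln$ invariant valuation; and since $\Omega\ge 0$ by \eqref{eq_affine surface area}, the constraint $c\ge 0$ preserves upper semicontinuity. For the ``only if'' direction, I would proceed in three stages: extract the $V_0$- and $V_n$-parts from the restriction to polytopes, check that the residual valuation is nonnegative and polytope-trivial, and finally identify the residual with a multiple of $\Omega$.

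For Stages~1 and~2, the restriction $\oZ|_{\cP^n}$ is a translation and $\sln$ invariant valuation, so by Theorem \ref{thm_polytopes} there exist $c_0\in\R$ and a Cauchy function $\zeta$ with $\oZ(P)=c_0 V_0(P)+\zeta(V_n(P))$ on $\cP^n$. Testing on the scaled cubes $C_s:=[0,s^{1/n}]^n$, the Hausdorff convergence $C_{s_j}\to C_s$ whenever $s_j\to s$, together with upper semicontinuity of $\oZ$, gives $\limsup_j \zeta(s_j)\le \zeta(s)$. Thus $\zeta$ is upper semicontinuous, hence Borel measurable, hence linear: $\zeta(t)=c_n t$. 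Set $\oZ'(K):=\oZ(K)-c_0 V_0(K)-c_n V_n(K)$. Then $\oZ'$ is an upper semicontinuous, translation and $\sln$ invariant valuation on $\cK^n$ that vanishes on $\cP^n$; approximating $K\in\cK^n$ from within by polytopes $P_j\to K$ yields $\oZ'(K)\ge \limsup_j\oZ'(P_j)=0$.

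The substantive step is to show $\oZ'=c\,\Omega$ for some $c\ge 0$. The plan is to establish, on the dense subclass $\cK^n_{C^2_+}$ of convex bodies with $C^2$ boundary and positive Gaussian curvature, an integral representation
\[\oZ'(K)=\int_{\partial K}\psi\bigl(\kappa_0(K,x)\bigr)\,\D V_K(x)\]
for some universal $\psi\colon[0,\infty)\to[0,\infty)$. The integrand should be extracted by a cap decomposition of $\partial K$: using the valuation property to distribute $\oZ'(K)$ additively across small caps cut by supporting hyperplanes, translation and $\sln$ invariance force each cap contribution to depend only on the equi-affine invariants of the osculating ellipsoid, which are precisely those encoded by $\kappa_0$ and $\D V_K$. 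Evaluating the resulting identity on centered ellipsoids $t\phi B^n$ with $\phi\in\sln$ and $t>0$, $\sln$ invariance collapses both $\oZ'$ and $\Omega$ to functions of the single variable $t$, and combining upper semicontinuity with the scaling Cauchy-type identity inherited from the valuation property pins down $\psi(s)=c\,s^{1/(n+1)}$ with $c\ge 0$. Finally, since $\cK^n_{C^2_+}$ is Hausdorff-dense in $\cK^n$ and both $\oZ'$ and $\Omega$ are upper semicontinuous and polytope-trivial, sandwiching an arbitrary $K$ between polytopal approximants from within and smooth approximants from outside extends $\oZ'=c\,\Omega$ to all of $\cK^n$. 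The main obstacle is the cap representation itself: the valuation property alone yields only finite additivity under dissections, and distilling a genuine pointwise density requires a careful limiting argument in which upper semicontinuity is used to pass from piecewise identities on polytopal slabs to an integral formula, with $\sln$ invariance pinning the integrand to a function of $\kappa_0$ alone.
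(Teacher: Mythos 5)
Your Stages~1 and~2 match the paper's opening moves (extract $c_0,c_n$ via Theorem~\ref{thm_polytopes} and upper semicontinuity of the Cauchy function, pass to the residual $\oZ'$, and show $\oZ'\ge 0$ by approximating from within by polytopes). The problem is Stage~3, and there are two genuine gaps.

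First, you acknowledge that establishing the pointwise integral representation $\oZ'(K)=\int_{\partial K}\psi(\kappa_0)\,\D V_K$ on smooth bodies is ``the main obstacle,'' but then you do not overcome it: the valuation property only gives finite additivity over dissections, and it is not at all automatic that the contribution of a small cap depends only on the osculating ellipsoid at the contact point -- a priori it could depend on the whole cap. Deriving a density from finite additivity plus invariance plus upper semicontinuity is precisely the core difficulty, and waving at it does not settle it. The paper's $n=2$ proof deliberately \emph{avoids} constructing any integral density; it instead introduces the bridge class $\cE^2$ of bodies dissectable into polytopes and caps of unit ellipses, on which $\oZ'$ is pinned down by a one-variable Cauchy-function argument (Lemma~\ref{lem_sector}) and simple additivity~\eqref{eq_finite_add}.

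Second, and more seriously, the final ``sandwiching'' step is wrong as stated. Upper semicontinuity of $\oZ'$ only gives you \emph{lower} bounds on $\oZ'(K)$ from approximating sequences: $E_k\to K$ implies $\oZ'(K)\ge\limsup\oZ'(E_k)$. Approximating from within by polytopes gives $\oZ'(K)\ge 0$; approximating by smooth bodies gives (at best) $\oZ'(K)\ge c\,\Omega(K)$. Neither approximation, nor their combination, gives the reverse inequality $\oZ'(K)\le c\,\Omega(K)$ -- agreement of two upper semicontinuous functionals on a dense subclass does not force them to agree everywhere, precisely because a USC functional can jump \emph{up} at a limit point. That upper bound is the substantive content of the proof, and the paper obtains it through Proposition~\ref{prop_unique}: assuming $\oZ'(K)>\limsup\oZ'(E_k)$ for all approximating $E_k\in\cE^2$, one replaces $K$ locally by arcs of unit ellipses, extracts a support triangle where the excess is concentrated, maps it equi-affinely to a support triangle of $B^2$, tiles $B^2$ Archimedean-style with rotated copies, and contradicts the upper semicontinuity of $\oZ'$ at $B^2$. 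Your proposal has no analogue of this step, so the hard direction of the identification $\oZ'=c\,\Omega$ is missing.
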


We present the proof of Theorem \ref{thm_asa} in the case $n=2$ from \cite{Ludwig:affinelength}. We call a closed triangle $T=T(x,y)$ a \emph{support triangle} of $K\in\cK^2$ with \emph{endpoints} $x$ and $y$, if $x,y\in\partial K$ and $T$ is bounded by \emph{support lines} (that is, 1-dimensional support hyperplanes) to $K$ at $x$ and $y$ and the chord connecting $x$ and $y$.
\begin{figure}[htp]
	\centering
	\begin{tikzpicture}[scale=4,rotate=25]
	
	    \draw[thick,rotate=30,fill=lightgray] (0,0) ellipse (0.595 and 0.788375);
	        
	    \draw[black,fill=black]  (0.105,-0.745) circle(0.01);
	    \draw[dashed] (-0.375,-0.6925)--(0.745,-0.815);
	    \draw (0.105,-0.745)--(0.745,-0.815);
	    \draw (0.105,-0.745) node[anchor=north]{\fontsize{10}{10}\selectfont $x$};
	
	    \draw[black,fill=black]  (0.64,-0.05) circle(0.01);
	    \draw[dashed] (0.56125,0.52375)--(0.745,-0.815);
	    \draw (0.64,-0.05)--(0.745,-0.815);
	    \draw (0.64,-0.05) node[anchor=west]{\fontsize{10}{10}\selectfont $y$};
	    
	    \draw (0.105,-0.745)--(0.64,-0.05);
	
    	\end{tikzpicture}
	\caption{Support triangle of a convex body $K\in\cK^2$ with endpoints $x,y\in\partial K$.}
\end{figure}
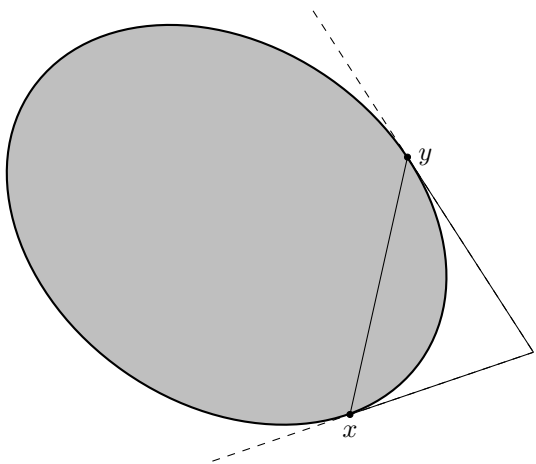

A \emph{cap} of a convex body $K$ is the intersection of a closed half-space and $K$. We set
$\sd(K,L):= V_2(K\triangle L)$ for $K,L\in \cK^2$, where $K\triangle L:=(K\cup L)\backslash(K\cap L)$ is the symmetric difference of $K$ and $L$. Note that the \emph{symmetric difference metric} $\sd$ induces on full-dimensional convex bodies the same topology  as the Hausdorff metric. 

\goodbreak
We require the following lemma, whose proof is omitted as it is very similar to the proof of Proposition \ref{prop_trans1}.

\begin{lemma}
If ${\,\oZ:\cK^2\to\R}$ is an upper semicontinuous,  rotation invariant valuation  that vanishes on polytopes, then  
\[\oZ(C)=c\,\al(C)\]
for every cap $C$ of $B^2$, where $c:= \oZ(B^2)/\al(B^2)$.\label{lem_sector}
\end{lemma}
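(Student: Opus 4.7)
The plan follows the template of Proposition~\ref{prop_trans1}: parameterize the caps of $B^2$ by a single real parameter, read a Cauchy equation off the valuation axiom, and promote it to linearity via the upper semicontinuity hypothesis. For $\alpha\in[0,2\pi]$, let $C_\alpha$ denote any cap of $B^2$ whose circular boundary arc has length $\alpha$; by rotation invariance of $\oZ$, the number $f(\alpha):=\oZ(C_\alpha)$ depends only on $\alpha$, with $f(0)=0$ (a degenerate cap is a single boundary point, hence a polytope) and $f(2\pi)=\oZ(B^2)$.

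The central step is the identity $f(\alpha+\beta)=f(\alpha)+f(\beta)$ for $\alpha,\beta\ge 0$ with $\alpha+\beta\le 2\pi$. For $\alpha+\beta<2\pi$, take $C_{\alpha+\beta}$ with chord endpoints $x,y\in\partial B^2$ and pick a point $z$ on its arc splitting the arc into subarcs of lengths $\alpha$ and $\beta$. Write $T:=[x,y,z]$ and let $C_\alpha,C_\beta$ be the subcaps bounded by the chords $[x,z]$ and $[z,y]$. I first check convexity of $T\cup C_\alpha$: any segment joining a point of $T$ to a point of $C_\alpha$ crosses the line through $x,z$ at a point of $B^2$, which must therefore lie in the shared edge $[x,z]$. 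Then $T\cup C_\alpha$ and $C_\beta$ have convex union $C_{\alpha+\beta}$ and intersect in the convex set $[z,y]$, while $T$ and $C_\alpha$ have convex union and intersect in $[x,z]$. Two applications of the valuation axiom, together with the vanishing of $\oZ$ on the polytopes $T$, $[x,z]$, and $[z,y]$, yield $\oZ(C_{\alpha+\beta})=\oZ(C_\alpha)+\oZ(C_\beta)$. In the boundary case $\alpha+\beta=2\pi$, the caps $C_\alpha,C_\beta$ sit on opposite sides of a common chord with convex union $B^2$ and intersect precisely in that chord, so a single application of the valuation axiom delivers the same identity.

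Since $\alpha\mapsto C_\alpha$ is continuous in the Hausdorff metric, upper semicontinuity of $\oZ$ transfers to $f$; in particular $f$ is Borel measurable and bounded above on $[0,2\pi]$, and the classical fact that an additive function bounded above on a set of positive measure is linear gives $f(\alpha)=c\,\alpha$ for some $c\in\R$. On the other hand, for a cap $C_\alpha$ the generalized Gaussian curvature equals $1$ along the arc and $0$ along the chord, so \eqref{eq_affine surface area} yields $\al(C_\alpha)=\alpha$, and in particular $\al(B^2)=2\pi$. Combining these gives
\[
c=\frac{f(2\pi)}{2\pi}=\frac{\oZ(B^2)}{\al(B^2)} \qquad\text{and}\qquad \oZ(C)=c\,\al(C)
\]
for every cap $C$ of $B^2$. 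The main obstacle is the additivity step: one must verify that the intermediate unions $T\cup C_\alpha$ and (in the degenerate case) $C_\alpha\cup C_\beta$ are genuinely convex so that the valuation axiom applies, and then arrange the two applications carefully so that the triangle and chord contributions drop out — an inclusion–exclusion shortcut is not obviously available, since it is not a priori clear that an upper semicontinuous valuation inherits the full inclusion–exclusion principle used for continuous ones.
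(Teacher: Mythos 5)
Your proof is correct and matches the paper's intended approach: the authors omit the proof as ``very similar to the proof of Proposition~\ref{prop_trans1}'', and you carry out precisely that template—parameterize caps by arc length, extract the Cauchy equation $f(\alpha+\beta)=f(\alpha)+f(\beta)$ by dissecting $C_{\alpha+\beta}$ across two chords through the intermediate arc point $z$ (with the triangle and segment contributions vanishing since $\oZ$ kills polytopes), and promote additivity to $f(\alpha)=c\alpha$ via upper semicontinuity. Your caution in avoiding the general inclusion--exclusion principle (which the paper states only for continuous valuations on $\cK^n$) and instead applying the two-set valuation identity twice with explicit convexity checks for the intermediate union $T\cup C_\alpha$ is well placed.
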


Let $\cE^2$ be the family of all convex bodies in $\R^2$ which may be dissected into finitely many polytopes and caps of unit ellipses. Here, any equi-affine image of the two-dimensional unit ball $B^2$ is called a unit ellipse.  Since planar polytopes  belong to $\cE^2$, the set $\cE^2$ is dense in $\cK^2$.

\begin{proposition}
If $\,\oZ:\cK^2\to[0,\infty)$ is an upper semicontinuous, translation and $\,\slt$ invariant valuation  that vanishes on polytopes, then 
\[\oZ(K)= \sup\{ \limsup_{k\to\infty} \oZ(E_k): E_k\to K, E_k\in\cE^2\}\]
for every $K\in\cK^2$.\label{prop_unique}
\end{proposition}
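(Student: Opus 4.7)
The proof splits naturally into the two inequalities. The upper bound
\[
\sup\big\{\limsup_{k\to\infty}\oZ(E_k):E_k\to K,\,E_k\in\cE^2\big\}\;\le\;\oZ(K)
\]
is immediate from the upper semicontinuity of $\oZ$: any admissible sequence $E_k\to K$ satisfies $\limsup_k\oZ(E_k)\le\oZ(K)$ by definition. The content of the proposition is the opposite inequality.

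For the lower bound my plan is to construct, for each $K\in\cK^2$, an explicit approximating sequence $E_k\in\cE^2$ with $E_k\to K$ and $\liminf_k\oZ(E_k)\ge \oZ(K)$. For each $k$, fix boundary points $x_{k,1},\ldots,x_{k,N_k}\in\partial K$ of mesh tending to zero and let $P_k:=[x_{k,1},\ldots,x_{k,N_k}]$ be the inscribed polygon. Each consecutive chord $[x_{k,i},x_{k,i+1}]$ cuts off from $K$ a small cap region $S_{k,i}:=K\cap H_{k,i}^+$, bounded by the chord and an arc of $\partial K$. Replace each $S_{k,i}$ by a cap $C_{k,i}$ of a unit ellipse sharing the same chord, chosen so that the family matches tangent directions at the $x_{k,i}$ (ensuring convexity of the union) and so that $C_{k,i}$ approximates $S_{k,i}$ as the mesh shrinks. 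Then $E_k:=P_k\cup\bigcup_i C_{k,i}$ is a convex body in $\cE^2$ with $E_k\to K$.

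By iterating the valuation property together with the vanishing of $\oZ$ on the polytopal intersection pieces (the polygon $P_k$ and the chord-and-vertex intersections, each of which is a segment or a point) one obtains both
\[
\oZ(E_k)=\sum_i \oZ(C_{k,i})\qquad\text{and}\qquad \oZ(K)=\sum_i\oZ(S_{k,i}).
\]
Consequently the task reduces to arranging, by a careful choice of the $C_{k,i}$, that $\sum_i\oZ(C_{k,i})\ge\oZ(K)-\varepsilon$ for all $k$ large enough.

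The main obstacle is precisely this last step: termwise application of upper semicontinuity only yields $\limsup_k\oZ(C_{k,i})\le\oZ(S_{k,i})$, which is the wrong direction. My plan to circumvent this is to exploit translation and $\slt$ invariance in order to normalize each small cap region $S_{k,i}$ to a canonical model shape (say with a chord of unit length along $e_1$ and suitably positioned), so that the approximation of $S_{k,i}$ by a unit-ellipse cap becomes a uniform comparison over a compact family of reference caps. The two-parameter freedom in choosing a unit ellipse through a prescribed chord then allows $C_{k,i}$ to be fitted to $S_{k,i}$ from both inside and outside, and combining this two-sided control with positivity $\oZ\ge 0$ and the smallness of each individual cap (which forces uniform approximation error across $i$) is expected to yield the per-cap lower bound $\oZ(C_{k,i})\ge\oZ(S_{k,i})-\varepsilon/N_k$ required to close the argument.
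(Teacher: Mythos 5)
Your reduction of the hard direction to the per--cap inequality $\oZ(C_{k,i})\ge \oZ(S_{k,i})-\varepsilon/N_k$ is where the argument breaks down, and the paper's proof works precisely because it never tries to prove such a bound. You correctly identify that upper semicontinuity points the wrong way termwise, but the circumvention you propose does not supply a valid mechanism. Two-sided containment $C'_{k,i}\subset S_{k,i}\subset C''_{k,i}$ of unit-ellipse caps gives no comparison of values of $\oZ$ because $\oZ$ is not assumed monotone (and the model valuation $\Omega$ is indeed not monotone); positivity of $\oZ$ only gives $\oZ(C_{k,i})\ge 0$, which is useless. The ``uniform comparison over a compact family of normalized caps'' would require some form of lower semicontinuity or equicontinuity of $\oZ$ along the approximation, which is exactly what is missing. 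In addition, the required per-cap error $\varepsilon/N_k$ shrinks faster than any information provided by the smallness of the caps alone, so even heuristically this budget cannot be met without more structure.

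The paper sidesteps the per-cap bound entirely by arguing by contradiction. Assuming the claimed identity fails for some $K$, one gets a uniform slack: there are $\varepsilon>0$ and $\delta>0$ with $\oZ(K)\ge \oZ(E)+\varepsilon\,\al(E)$ for all $E\in\cE^2$ with $\sd(K,E)<\delta$. One then builds $E_k\in\cE^2$ exactly as you do (inscribed polygon plus unit-ellipse caps inscribed in the support triangles of $K$), applies simple additivity, and uses a pigeonhole argument to extract a single support triangle $T^{(k)}_{i_k}$ on which the slack inequality persists. An $\slt$ map sends $T^{(k)}_{i_k}$ to a support triangle of $B^2$ (taking the inscribed ellipse to $B^2$), and one replicates the transformed cap of $K$ in $l_k$ rotated copies around $B^2$. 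The bodies $\tilde K_k$ so constructed converge to $B^2$, yet simple additivity together with the persistent slack gives $\oZ(\tilde K_k)\ge \oZ(B^2)+\tfrac{\varepsilon}{2}\,\al(B^2)$ for large $k$; this contradicts upper semicontinuity of $\oZ$ at $B^2$. The crucial point is that the argument is anchored at the single body $B^2$, on which $\oZ$ and $\al$ can be compared, rather than requiring control at arbitrarily many small caps of an arbitrary $K$. Your approximating construction is essentially the paper's, but the inequality you need to close the proof is unreachable along the route you describe.
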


\begin{proof}
Since $\oZ$ is upper semicontinuous, we have 
\[\oZ(K)\ge  \limsup_{k\to\infty} \oZ(E_k)\]
for every $K\in\cK^2$ and for every sequence $E_k\in \cE^2$ such that $E_k\to K$.
To prove the statement of the proposition, assume on the contrary that there is $K\in\cK^2$  such that
\begin{equation}\label{eq_contra}
\oZ(K)>\limsup_{k\to\infty}\oZ(E_k)
\end{equation}
for all sequences $E_k$ with $E_k\in\cE^2$ and $E_k\to K$. By \eqref{eq_upper},  the affine surface area of $E$ is uniformly bounded for all convex bodies $E$ with $\sd(K, E)< 1$, say.  Therefore, by \eqref{eq_contra}, for every $\varepsilon>0$ small enough, there is $0<\delta<1$ such that
\begin{equation}\label{eq_echt}
\oZ(K)\ge \oZ(E)+\varepsilon\, \al(E)
\end{equation}
for every $E\in\cE^2$ with $\sd(K, E)<\delta$.

\goodbreak
We approximate $B^2$ by a sequence of convex bodies built from 
suitable pieces of $K$ and  show that \eqref{eq_echt} leads to a contradiction.
Without loss of generality, assume that the origin is an interior point of $K$.
Choose $k$ rays starting at the origin such that
\begin{equation}\label{eq_eps}
\sum_{i=1}^{k} V_2(T_i^{(k)})<\delta
\end{equation}
where  $T_i^{\scriptscriptstyle(k)}=T(x_i^{\scriptscriptstyle(k)},x_{i+1}^{\scriptscriptstyle(k)}) $ are support triangles and $x_1^{\scriptscriptstyle(k)}, \dots, x_k^{\scriptscriptstyle(k)}, x_{k+1}^{\scriptscriptstyle(k)}=x_1^{\scriptscriptstyle(k)}$ are the consecutive points where the rays intersect $\partial K$.  
For every $T_i^{\scriptscriptstyle(k)}$ with non-empty interior, there is a unique arc of the unit ellipse which 
touches the two sides of $T_i^{\scriptscriptstyle(k)}$ which are given by the support lines
of $K$. We denote by $E_i^{\scriptscriptstyle(k)}$ the convex body bounded by this arc of an ellipse and the chord  connecting $x_i^{\scriptscriptstyle(k)}$ and $x_{i+1}^{\scriptscriptstyle(k)}$.
In the case that $T_i^{\scriptscriptstyle(k)}$  has empty interior, we set $E_i^{\scriptscriptstyle(k)}:=T_i^{\scriptscriptstyle(k)}$.

\goodbreak
We define 
    \[ E_k:= \bigcup_{i=1}^k E_i^{(k)} \cup \Big (K\backslash \bigcup_{i=1}^{k} T_i^{(k)}\Big)\]
Note that  $E_k\in \cE^2$ and that \eqref{eq_eps} implies that 
$\sd(K, E_k)<\delta$. 

\goodbreak
Since $\oZ$ and $\al$  vanish on polytopes,  \eqref{eq_echt}  implies that
\begin{equation*}
    \sum_{i=1}^{k}\oZ(K\cap T_i^{(k)})= \oZ(K)\ge \oZ(E_k)+\varepsilon\,\al(E_k)=\sum_{i=1}^{k}\big(\oZ(E_k\cap T_i^{(k)})+ \varepsilon\,\al(E_k\cap T_i^{(k)})\big).
\end{equation*}
Consequently, for every $k$, there exists a support triangle  $T_{i_k}^{\scriptscriptstyle(k)}$ with non-empty interior such that 
\begin{equation}\label{eq_ungl}
    \oZ(K\cap T_{i_k}^{(k)})\ge \oZ(E_k\cap T_{i_k}^{(k)}) +\varepsilon\,\al(E_k\cap T_{i_k}^{(k)}).
\end{equation}

We take  an equi-affine transformation  $\phi^{\scriptscriptstyle(k)}$ which 
transforms  $T_{i_k}^{\scriptscriptstyle(k)}$ into a support triangle $\tilde T^{\scriptscriptstyle(k)}$ of  $B^2$, and denote by $\tilde C^{\scriptscriptstyle(k)}$ and $B^{\scriptscriptstyle(k)}$  the images under  
$\phi^{\scriptscriptstyle(k)}$  of the caps $K\cap T_{i_k}^{\scriptscriptstyle(k)}$ and $E_k\cap T_{i_k}^{\scriptscriptstyle(k)}$, respectively. By \eqref{eq_ungl} and the equi-affine invariance of $\oZ$, we have
\begin{equation}\label{eq_gross}
    \oZ(\tilde C^{(k)})\ge \oZ(B^{(k)})+\varepsilon\,\al(B^{(k)}).
\end{equation}
Let $l_k$ be the largest integer such that there are rotations 
$\psi_1,\ldots,\psi_{l_k}$ with the property that 
$\psi_1(\tilde T^{\scriptscriptstyle(k)}), \ldots$, $ \psi_{l_k}(\tilde T^{\scriptscriptstyle(k)})$ are non-overlapping support
triangles of $B^2$. Since for a sector of $B^2$ with an angle $2 \,\alpha$ at the origin, the area of a support triangle to $B^2$ is
$\sin^2\alpha \tan \alpha$,
we have
\begin{equation}\label{eq_number} 
    \sin^2\left( \frac{\pi}{l_k+1}\right) \tan \left( \frac \pi {l_k+1}\right)\le V_2(\tilde T^{(k)})\le \sin^2\left( \frac \pi {l_k} \right)\tan \left( \frac \pi {l_k}\right).
\end{equation}
We construct convex bodies 
    \[ \tilde K_k :=\bigcup_{i=1}^{l_k} \psi_i(\tilde C^{(k)}) \cup \Big(B^2\backslash \bigcup_{i=1}^{l_k} \psi_i(\tilde T^{(k)})\Big).\]
Note that \eqref{eq_gross} implies that 
\begin{equation}\label{eq_CnEn}
    \oZ(\tilde K_k)\ge \oZ(B^2)+\frac \varepsilon2\, \al(B^2)
\end{equation}
for $k$ sufficiently large. Since $\sd(\tilde K_k, B^2)\le l_k \,V_2(\tilde T^{\scriptscriptstyle(k)})$, it follows from \eqref{eq_number} that
\begin{equation}\label{eq_conv} 
\tilde K_k\to B^2 
\end{equation}
as $k\to\infty$.
Thus by the upper semicontinuity of $\oZ$, by \eqref{eq_conv},  \eqref{eq_CnEn} and \eqref{eq_equal_ellipse}, we obtain that
    \[\oZ(B^2)\ge \limsup_{k\to\infty}\oZ(\tilde K_k)\ge \oZ(B^2) +\frac{\varepsilon}2\,\al(B^2).
        \]
This is a contradiction since $\varepsilon>0$ and $\al(B^2)>0$, which concludes the proof of the proposition.
\end{proof}

Note that we can apply Proposition \ref{prop_unique} with $\oZ=\al$ and obtain that
\begin{equation}\label{eq_al_char}
    \al(K)= \sup\{ \limsup_{k\to\infty} \al(E_k): E_k\to K, E_k\in\cE^2\}
\end{equation}
for every $K\in\cK^2$.

\begin{proof}[Proof of Theorem \ref{thm_asa}\, for $n=2$]
Let $\oZ:\cK^2\to \R$ be  an upper semicontinuous, translation and $\slt$ invariant valuation.
By Theorem \ref{thm_polytopes} and since upper semicontinuous Cauchy functions are linear, there are $c_0, c_2\in\R$ such that
\[\oZ(P)=c_0 V_0(P)+ c_2 V_2(P)\]
for every $P\in \cP^2$. Define $\otZ: \cK^2 \to \R$ by
\[\otZ(K):= \oZ(K)- c_0 V_0(K)-c_2V_2(K)\]
and note that $\otZ$ is an upper semicontinuous, translation and $\slt$ invariant valuation that vanishes on polytopes. 
For every $K\in\cK^2$, there is a sequence of polytopes $P_k$ with $P_k\to K$. Hence, the upper semicontinuity of $\otZ$ implies that  
\[\otZ(K)\ge \limsup_{k\to\infty} \otZ(P_k)= 0,\]
which shows that $\otZ$ is non-negative. 
Using Lemma \ref{lem_sector} and the translation and $\slt$ invariance of $\otZ$, we see that 
\[\otZ(C)=c\,\al(C)\]
for every cap $C$ of a unit ellipse, where $c= \oZ(B^2)/\al(B^2)$. 
Since $\oZ$ vanishes on polytopes, it is a simple valuation, and it follows from \eqref{eq_finite_add} that
\begin{equation}\label{eq_equal_ellipse}
   \otZ(E)= c\,\al(E) 
\end{equation}
for $E\in\cE^2$. 
Proposition \ref{prop_unique} and \eqref{eq_al_char} now complete the proof of the theorem.
\end{proof}
\goodbreak

For $K\in\cK^2$, Blaschke \cite{Blaschke} gave the following definition of affine surface area.
Choose  sub\-division points $x_1^{\scriptscriptstyle(k)},\ldots, x_k^{\scriptscriptstyle(k)}, x_{k+1}^{\scriptscriptstyle (k)}=x_1^{\scriptscriptstyle(k)}$ on $\partial K$ and support triangles $T_1^{\scriptscriptstyle (k)},\ldots, T_k^{\scriptscriptstyle (k)}$ such that $T_j^{\scriptscriptstyle (k)}=T(x_j^{\scriptscriptstyle(k)},x_{j+1}^{\scriptscriptstyle(k)})$.  
Define
\begin{equation}\label{eq_limdef}   
\tilde\al(K):=\lim \sum_{j=1}^k \sqrt[3]{8\,V_2(T_j^{(k)})} 
\end{equation}
where the limit is taken over a sequence of subdivisions with 
\[\max_{i=1,\ldots,k} V_2(T_i^{ (k)}) \to\nolinebreak 0\]
as $k\to\infty$.  For smooth convex bodies in 
$\cK^2$, Blaschke showed that this limit always exists and that $\tilde\al(K)=\al(K)$.

If we choose a further subdivision point $y\in\partial K$ in a support triangle $T(x,z)$ of $K\in\cK^2$, we obtain support triangles $T(x,y)$ and $T(y,z)$ and the 
following elementary anti-triangle inequality holds
\begin{equation*}
\sqrt[3]{8\,V_2(T(x,z))}\ge \sqrt[3]{8\,V_2(T(x,y))}+\sqrt[3]{8\,V_2(T(y,z))}
\end{equation*}
(cf.~\cite[p.~38]{Blaschke} or \cite{Calabi:Olver:Tannenbaum}). This implies that
$\sum_{j=1}^k \sqrt[3]{8\,V_2(T_j^{\scriptscriptstyle (k)})}$ decreases as the subdivision is refined.
Consequently, the limit in \eqref{eq_limdef} exists and is independent of the sequence of subdivisions chosen and
\begin{equation*}
\tilde\al(K)=\inf \sum_{j=1}^k \sqrt[3]{8\,V_2(T_j^{(k)})}
\end{equation*}
where the infimum is taken over all subdivisions of $\partial K$. Thus $\tilde\al$ is well defined on $\cK^2$ and 
Leichtwei\ss\ \cite{Leichtweiss98} proved that $\tilde\al(K)=\al(K)$ for every  $K\in\cK^2$. This is also a simple consequence of Theorem \ref{thm_asa} for $n=2$. Indeed, $\tilde\al: \cK^2\to \R$ is equi-affine invariant and vanishes on lower dimensional sets. As an infimum of continuous functionals, $\tilde\al$ is upper semicontinuous. So we have only to show that $\tilde\al$ is a valuation.
Since $\max_{i=1,\ldots,k} V_2(T_i^{\scriptscriptstyle (k)}) \to0$ as $k\to\infty$,  we have  for every line $H$,
\[\tilde\al(K)= \tilde\al(K\cap H^+)+\tilde\al(K\cap H^-)\]
where $H^+$ and $H^-$ are the closed halfspaces bounded by $H$. It is not difficult to see that this implies that
$\tilde\al$ is a valuation. Thus Theorem \ref{thm_asa} for $n=2$ shows that  
\[\tilde\al(K)=c\, \al(K)\]
with a  constant $c\ge 0$ and a simple calculation for $K=B^2$ shows that $c=1$.

\section{Translation Invariant Valuations}

For translation invariant valuations on convex polytopes and on convex bodies, Hadwiger developed the basic theory. Many of the results are even valid in the setting of rational polytopes in $\Q^n$ and polytopes with integer coordinates (see, for example, \cite{BoeroeczkyLudwig_survey}).
Nevertheless, we will restrict our attention to convex polytopes and convex bodies in $\R^n$.

\subsection{The Canonical Simplex Decomposition}

For $0\le k\le n$, a $k$-dimensional simplex $S$ in $\R^n$ is the convex hull of $(k+1)$ affinely independent points $p_0, \dots, p_k\in\R^n$. We set $x_i:= p_i-p_{i-1}$ for $1\le i\le k$ and $x_0:=p_0$ and write $S=\langle x_0;x_1,\dots,x_k \rangle$. For $k=0$, we set $S:=\{x_0\}$.

\begin{lemma} A set $S$ is an $n$-dimensional simplex with vertices  $p_0, \ldots, p_n\in\R^n$ if and only  if
\begin{equation}\label{eq_csimplex}
S=\Big\{x_0+\sum_{i=1}^{n} r_{i} x_i:  1 \geq r_{1} \geq \ldots \geq r_n \geq 0\Big\}.
\end{equation}
Conversely, for  $x_{0}, \ldots, x_n\in \R^n$, the set defined in \eqref{eq_csimplex} is an $n$-dimensional simplex if $x_1, \dots, x_n$ are linearly independent.
\end{lemma}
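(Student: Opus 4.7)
The plan is to rewrite the barycentric description of the simplex $[p_0,\dots,p_n]$ in terms of the telescoped vectors $x_i = p_i - p_{i-1}$, and then exhibit an explicit bijection between barycentric coordinates $(\lambda_0,\dots,\lambda_n)$ and tuples $(r_1,\dots,r_n)$ satisfying $1\ge r_1\ge\cdots\ge r_n\ge 0$. This will handle both implications of the first statement simultaneously, and the converse will follow from a short observation about affine independence.

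First I would fix the identification $p_i = x_0 + x_1 + \cdots + x_i$ for $0\le i\le n$, which just reverses the definition of the $x_i$. For $\lambda_0,\dots,\lambda_n\ge 0$ with $\sum_{i=0}^n \lambda_i = 1$, I would compute
\[
\sum_{i=0}^n \lambda_i\, p_i = x_0 + \sum_{i=1}^n \Big(\sum_{j=i}^n \lambda_j\Big) x_i,
\]
and set $r_i := \sum_{j=i}^n \lambda_j$. The monotonicity $1\ge r_1\ge r_2\ge\cdots\ge r_n\ge 0$ and the bound $r_1 = 1-\lambda_0 \le 1$ are then immediate. Conversely, given such $(r_1,\dots,r_n)$, I would define $\lambda_0 := 1-r_1$, $\lambda_i := r_i - r_{i+1}$ for $1\le i\le n-1$, and $\lambda_n := r_n$; these are non-negative, sum to $1$, and reverse the substitution. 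This shows the right-hand side of \eqref{eq_csimplex} coincides with the convex hull $[p_0,\dots,p_n]$, proving both directions of the first statement.

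For the converse claim, I would start from $x_0,\dots,x_n\in\R^n$ with $x_1,\dots,x_n$ linearly independent and define $p_i := x_0+x_1+\cdots+x_i$. Then $p_i - p_0 = x_1+\cdots+x_i$ for $1\le i\le n$, and since the matrix expressing $(p_1-p_0,\dots,p_n-p_0)$ in terms of $(x_1,\dots,x_n)$ is upper triangular with $1$'s on the diagonal, linear independence of $x_1,\dots,x_n$ is equivalent to affine independence of $p_0,\dots,p_n$. Hence $[p_0,\dots,p_n]$ is an $n$-dimensional simplex, and the first part shows it equals the set defined in \eqref{eq_csimplex}.

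No real obstacle is expected here; the proof is essentially the change of variables $r_i = \lambda_i + \lambda_{i+1} + \cdots + \lambda_n$, and the only care needed is to verify that this map between $(\lambda_0,\dots,\lambda_n)$ in the standard simplex and $(r_1,\dots,r_n)$ in the order simplex is a bijection preserving the corresponding point of $\R^n$.
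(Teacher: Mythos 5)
Your proof is correct and follows essentially the same route as the paper: the change of variables $r_i = \sum_{j\ge i}\lambda_j$ between barycentric coordinates and the order-simplex coordinates, with $p_k = \sum_{j=0}^k x_j$ for the converse. The only difference is that you spell out the inverse substitution and the upper-triangularity argument for affine independence, which the paper leaves implicit.
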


\begin{proof}
Every point $x\in S$ is a convex combination of $p_0, \dots, p_n$, that is,  
\[x=\sum_{i=0}^{n} t_{i} p_{i}\] 
with $t_{i}\ge 0$ and $\sum_{i=0}^{n} t_{i}=1$. Setting $r_{i}=\sum_{j=i}^{n} t_{j}$, we have
\[x=x_0+\sum_{i=1}^{n} r_{i} x_{i}.\]
Hence, every point contained in the right side of \eqref{eq_csimplex} is in $S$.

Conversely, if $S$ is the set defined in \eqref{eq_csimplex}, then, setting  $p_{k}=\sum_{j=0}^{k} x_j$, we have
$S=[p_{0}, \ldots, p_{n}]$.
\end{proof}
\goodbreak

The following result is called the Hadwiger canonical simplex decomposition \cite[Section~1.2.6]{Hadwiger:V}. 

\begin{theorem}
Let $S:=\langle x_0;x_1,\dots,x_n\rangle$ be an $n$-dimensional simplex. Defining $\underline S_0:=\{x_0\}$, $\overline S_{n-k}:=\{x_0+\dots+x_n\}$, 
\[\underline S_k:=\big\langle x_0;x_1,\dots,x_k\big\rangle\quad\text{ and }\quad\overline S_{n-k}:=\big\langle x_0+\sum_{i=1}^k x_i;x_{k+1},\dots,x_n\big\rangle,\]
for $1\le k\le n-1$,
we have
\begin{equation*}
S=\bigsqcup_{k=0}^n \big( (1-t)\, \underline S_k+ t\, \overline S_{n-k}\big)
\end{equation*}
for $0<t<1$.\label{thm_canon}
\end{theorem}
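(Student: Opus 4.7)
The plan is to set up the canonical parametrization from the preceding lemma and then simply compute each piece $(1-t)\underline S_k + t \overline S_{n-k}$ in those coordinates, showing that it corresponds to a slab cut out by a single hyperplane in parameter space.

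First I would use the lemma to parametrize $S=\langle x_0;x_1,\dots,x_n\rangle$ by the ordered simplex
\[
S=\Big\{x_0+\sum_{i=1}^n r_i x_i : 1\geq r_1\geq\cdots\geq r_n\geq 0\Big\}.
\]
The same lemma gives analogous parametrizations of $\underline S_k$ and $\overline S_{n-k}$: a point of $\underline S_k$ is $x_0+\sum_{i=1}^k s_i x_i$ with $1\ge s_1\ge\cdots\ge s_k\ge 0$, and a point of $\overline S_{n-k}$ is $x_0+\sum_{i=1}^k x_i+\sum_{i=k+1}^n s'_i x_i$ with $1\ge s'_{k+1}\ge\cdots\ge s'_n\ge 0$. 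The crucial feature is that the parameter sets of $\underline S_k$ and $\overline S_{n-k}$ live in complementary blocks of coordinates, so the Minkowski sum decouples.

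Next, setting $r_i:=(1-t)s_i+t$ for $i\le k$ and $r_i:=t s'_i$ for $i>k$, a direct substitution gives
\[
(1-t)\underline S_k + t\,\overline S_{n-k}=\Big\{x_0+\sum_{i=1}^n r_i x_i : 1\ge r_1\ge\cdots\ge r_k\ge t\ge r_{k+1}\ge\cdots\ge r_n\ge 0\Big\}.
\]
The inclusion $\subset$ is immediate; the reverse uses $0<t<1$ to invert the affine changes $s_i=(r_i-t)/(1-t)$ and $s'_i=r_i/t$, both of which land in $[0,1]$ precisely when the slab inequalities hold. The inequality $r_k\ge r_{k+1}$ between the two blocks is free because $r_k\ge t\ge r_{k+1}$. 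The degenerate cases $k=0$ and $k=n$ are handled identically, with the degenerate vertex $\{x_0\}$ or $\{x_0+\cdots+x_n\}$ contributing no parameters and removing one half of the chain of inequalities.

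Finally, I would conclude by observing that the $n+1$ slabs (for $k=0,1,\dots,n$) are exactly the pieces one obtains by cutting the ordered simplex $\{1\ge r_1\ge\cdots\ge r_n\ge 0\}$ according to how many of the $r_i$ exceed $t$. Their union is the full ordered simplex, hence equals $S$. Two slabs with distinct indices $k<k'$ force $r_{k+1}=\cdots=r_{k'}=t$ on their overlap, so their intersection lies in a finite union of affine hyperplanes and thus has empty interior in $\R^n$; this is exactly what is needed for the disjointness in the dissection symbol~$\sqcup$. There is no real obstacle in this argument — the only point that requires any care is the bookkeeping of the two blocks of indices and the verification that the ``middle'' inequality $r_k\ge r_{k+1}$ is automatic, which is precisely why the slab description works.
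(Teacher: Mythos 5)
Your proof is correct and follows essentially the same approach as the paper's: both parametrize the factors $\underline S_k$ and $\overline S_{n-k}$ via the ordered-simplex lemma, perform the same affine change of variables to identify $(1-t)\,\underline S_k + t\,\overline S_{n-k}$ with the slab $\{1\ge r_1\ge\cdots\ge r_k\ge t\ge r_{k+1}\ge\cdots\ge r_n\ge 0\}$, and conclude coverage and disjointness of interiors from the slab description.
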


\goodbreak
\begin{proof}
Setting
\[Q_k(t):= (1-t)\, \underline S_k+ t\, \overline S_{n-k},\]
we obtain by Lemma \ref{eq_csimplex} that
\begin{eqnarray*}
Q_k(t)&=&\Big\{(1-t)\big(x_0+\sum_{i=1}^k r_i\, x_i\big)+t\big(x_0+\sum_{i=1}^k x_i +\sum_{i=k+1}^n s_i\, x_i\big):\nonumber\\
&&\hspace{3cm} 1\ge r_1\ge \dots\ge r_k\ge 0, 1\ge s_{k+1}\ge \dots \ge s_n\ge 0\Big\}\nonumber\\
&=& \Big\{x_0+\sum_{i=1}^n t_i\,x_i\,: 1\ge t_1\ge \dots \ge t_k\ge t\ge t_{k+1}\ge\dots\ge t_n\ge0\Big\}.
\end{eqnarray*}
For $x\in S$, this implies that $x\in Q_k(t)$ for a suitable $k$. We have to show that the sets $Q_k(t)$ for $1\leq k \leq n-1$ have pairwise disjoint interiors. If  $x\in Q_i(t)\cap Q_k(t)$ for $i<k$, then $t_{i+1}=\dots=t_k=t$ and therefore $r_j=0$ and $s_j=1$ for $i+1\le j\le k$. It follows that $x\in\partial Q_i(t)$ and $x\in\partial Q_j(t)$. This completes the proof of the statement. \end{proof}

\begin{figure}
	\centering
	\tdplotsetmaincoords{55}{45}
	\begin{tikzpicture}[scale=3,line join=bevel,tdplot_main_coords]
		\tikzset{xzplane/.style={canvas is xz plane at y=#1}}
		\tikzset{yzplane/.style={canvas is yz plane at x=#1}}
		\tikzset{xyplane/.style={canvas is xy plane at z=#1}}
		
		\pgfmathsetmacro\tee{0.7}
		
	\draw [thick] (0,0,0) -- (2,0,0) -- (2,1.5,0);
		\draw [thick,densely dashed] (2,1.5,0) -- (0,0,0);
		\draw [thick] (0,0,0) -- (0.85,0,2) -- (2,0,0) -- cycle;
		\draw [thick] (0.85,0,2) -- (2,1.5,0);
		
		\draw [dashed] (2-2*\tee,0,0) -- (2,\tee*1.5,0);
		\draw (2,\tee*1.5,0) -- (2+0.85*\tee-2*\tee,0,2*\tee) -- (2-2*\tee,0,0);
		
		\draw [dashed] (2-2*\tee,0,0) -- (2-2*\tee,1.5-1.5*\tee,0) -- (2+0.85*\tee-2*\tee,1.5-1.5*\tee,2*\tee);
		\draw (2+0.85*\tee-2*\tee,1.5-1.5*\tee,2*\tee) -- (2+0.85*\tee-2*\tee,0,2*\tee);
				
		\draw [dashed] (0.85*\tee,0,2*\tee) -- (2+0.85*\tee-2*\tee,1.5-1.5*\tee,2*\tee);
		\draw (2+0.85*\tee-2*\tee,0,2*\tee) -- (0.85*\tee,0,2*\tee);
		
	\end{tikzpicture}
	\caption{Canonical simplex decomposition}
\end{figure}

We say that a simplex $\langle x_0;x_1,\dots,x_n\rangle$ is \emph{orthogonal} if the vectors $x_1, \dots, x_n$ are pairwise orthogonal.  The following result is due to Hadwiger  \cite[Section 1.3.4]{Hadwiger:V}. 

\begin{lemma}
Let $z\in\R^n$ be given. 	
If $P\in\cP^n$ is $n$-dimensional,  then there are orthogonal simplices $S_1,\dots, S_{m}$, $S'_1,\dots, S'_{m'}$, each with a vertex at $z$, such that\label{lem_ortho}
	\[
	P\sqcup\bigsqcup_{i=1}^{m} S_i\sim \bigsqcup_{j=1}^{m'} S_j'.
	\]
\end{lemma}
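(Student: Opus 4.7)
The plan is to reduce $P$ to orthogonal simplices at $z$ through three stages: first triangulate $P$ into ordinary $n$-simplices, then dissect each simplex into orthogonal simplices sharing a common distinguished vertex, and finally apply rigid motions to position each orthogonal simplex with its distinguished vertex at $z$ in a prescribed non-overlapping configuration.

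The heart of the argument is the second stage, which I would establish by induction on $n$ in the following strengthened form: for every $n$-simplex $T$ with a distinguished vertex $w$, there are orthogonal simplices $U_1,\dots,U_l$ and $V_1,\dots,V_{l'}$, each based at $w$ in the sense of having the form $\langle w;y_1,\dots,y_n\rangle$, such that $T\sqcup\bigsqcup_i U_i\sim\bigsqcup_j V_j$. The base case $n=1$ is trivial, since any segment is already an orthogonal simplex and can be translated to have $w$ as an endpoint. For the inductive step, drop the perpendicular from $w$ to the affine hull $H$ of the opposite face $F=[v_1,\dots,v_n]$, with foot $q$. In the favourable case $q\in\operatorname{int} F$, subdivide $F$ as $\bigsqcup_i[q,v_1,\dots,\widehat{v_i},\dots,v_n]$ and cone each piece from $w$. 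Since $w-q\perp H$, applying the inductive hypothesis inside $H$ to each $(n-1)$-face $[q,v_1,\dots,\widehat{v_i},\dots,v_n]$ \emph{with distinguished vertex $q$} yields orthogonal $(n-1)$-simplices based at $q$; coning each such simplex from $w$ produces an orthogonal $n$-simplex based at $w$, because $w-q$ is perpendicular to every edge vector coming from $H$. Crucially, the auxiliary orthogonal simplices based at $q$ provided by the induction lift along the cone to auxiliary orthogonal simplices based at $w$, preserving the structure of the dissection relation.

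The main obstacle is the case where the foot $q$ lies outside $F$. Here one must enlarge the configuration by adjoining an auxiliary simplex $\tilde F$ to $F$ so that $F\cup\tilde F$ contains $q$ in its interior, cone both $\tilde F$ and $F\cup\tilde F$ from $w$, handle the enlarged cone by the interior-foot case, and then subtract the correction cone $[w,\tilde F]$, which is itself a simplex of the same dimension but can be chosen so that its perpendicular foot from $w$ lies properly. The excess pieces introduced and removed in this correction are exactly what feed into the $U_i$ and $V_j$ terms on the two sides of the dissection relation, which is the structural reason the statement of the lemma permits adding orthogonal simplices on the left. Some care is needed to ensure the correction process terminates; one way is to use a secondary induction on the number of faces of $F$ through which $q$ is ``on the wrong side'', or alternatively to invoke the canonical simplex decomposition of Theorem~\ref{thm_canon} to reduce the correction simplex to a controlled form.

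Once Stage~2 is complete, Stage~3 is essentially bookkeeping: for each orthogonal simplex produced, compose the translation sending its base vertex to $z$ with a rotation about $z$ placing it into a preassigned open angular sector. Rotations fixing $z$ preserve the orthogonal-simplex property, so the final pieces remain orthogonal simplices based at $z$. By choosing sufficiently many disjoint sectors about $z$ one achieves pairwise disjoint interiors on the right-hand side, with any final mismatches absorbed into $S_i$ and $S_j'$. The hardest step throughout is plainly the outside-foot correction in Stage~2; if that is handled cleanly, the rest of the proof proceeds mechanically.
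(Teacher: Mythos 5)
Your overall inductive skeleton --- induce on dimension through a facet hyperplane $H$, drop a perpendicular to get a foot $q$, apply the $(n-1)$-dimensional hypothesis in $H$, and cone back up --- is the same as the paper's. But you have manufactured the difficulty that you then fail to resolve. You strengthen the induction hypothesis so that the distinguished point $w$ must be a \emph{vertex} of the simplex $T$; as a result, when you descend to $H$ you must fan-triangulate the facet $F$ around the foot $q$ so that $q$ becomes a vertex of each sub-piece, and this is exactly what forces the case distinction on whether $q\in\operatorname{int}F$. Your proposed fix for the exterior case (adjoin $\tilde F$, work with $F\cup\tilde F$, subtract the correction cone $[w,\tilde F]$) is only a gesture: nothing ensures that $[w,\tilde F]$ has \emph{its} perpendicular foot inside $\tilde F$ --- the competing requirements that $F\cup\tilde F$ be convex and contain $q$, and that $\tilde F$ contain its own new foot, need not be simultaneously satisfiable --- and you concede yourself that termination of the correction process is unclear. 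As it stands, this is a genuine gap.

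The paper avoids the problem entirely by stating the induction hypothesis in the form that the descent actually requires: for \emph{every} $(n-1)$-dimensional hyperplane $E$, \emph{every} point $z_E\in E$ (not merely vertices), and every polytope $F\subset E$, there are orthogonal $(n-1)$-simplices with a vertex at $z_E$ such that $F\sqcup\bigsqcup_i F_i\sim\bigsqcup_j F_j'$. With this hypothesis there is no fan-triangulation of $F$ at all: one takes $z_E$ to be the closest point of $E$ to $z$ and applies the hypothesis directly to the pair $(F,z_E)$, regardless of where $z_E$ falls relative to $F$. Coning from $z$ then produces the orthogonal $n$-simplices at $z$, exactly as in your interior-foot case, and the ``outside-foot correction'' never arises. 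The repair for your argument is therefore not a cleverer correction procedure but a relaxation of the induction hypothesis back to the paper's form; once that is done, your Stages~1 and~3 also become unnecessary, since the induction can be run at the fixed point $z$ from the start.
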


\begin{proof}
The statement is easy to prove for $n=1$. Assume that it is true in $\cP(E)$ for every $(n-1)$-dimensional hyperplane $E$ and every $z_E\in E$. 

It suffices to prove the statement for an $n$-dimensional simplex $S$. 
Let $F$ be one of its facets whose affine hull $E$ does not contain $z$. 
Let $z_E$ be the closest point to $z$ in $E$. We  use the induction assumption for polytopes in $E$ with $z_E$ and obtain that  there are $(n-1)$-dimensional simplices $F_1, \dots, F_k,$ $F_1', \dots, F_{k'}'$, each with a vertex at $z_E$ such that
\[	F\sqcup\bigsqcup_{i=1}^{k} F_i\sim \bigsqcup_{j=1}^{k'} F_j'.\]
Setting $S_i:=[z,F_i]$ for $1\le i\le k$ and $S_j':=[z, F_j']$ for $1\le j\le k'$, we obtain the statement for $S$.
\end{proof}

\noindent
The question of whether every polytope in $\cP^n$ can be dissected into finitely many orthogonal simplices is open. Hadwiger conjectured that it is possible, and his conjecture has been proved for $n\le 5$ (see, for example, \cite{BKKS}).

\subsection{Valuations Vanishing on Orthogonal Cylinders}

We say that $P\in\cP^n$ is a convex \emph{orthogonal cylinder} if there are orthogonal, complementary subspaces $E$ and $F$ with $\dim E, \dim F \ge 1$ and  polytopes $P_E\subset E$ and $P_F\subset F$ such that $P=P_E+P_F$. 
Note that this class includes all polytopes that are not full-dimensional.

\begin{proposition}[Hadwiger]
If $\,\oZ\colon\cK^n\to\R$ is a continuous, translation invariant valuation that vanishes on convex orthogonal cylinders,   
then $\oZ$ is homogeneous of degree 1.\label{prop_onehom}
\end{proposition}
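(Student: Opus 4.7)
The plan is to establish $1$-homogeneity first on orthogonal $n$-simplices using Theorem~\ref{thm_canon}, then on all $n$-dimensional polytopes by means of Lemma~\ref{lem_ortho}, and finally on $\cK^n$ by continuity and approximation by polytopes.

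Observe first that every lower-dimensional polytope is an orthogonal cylinder, so $\oZ$ vanishes on such polytopes, and by continuity $\oZ$ vanishes on lower-dimensional convex bodies as well; in particular $\oZ$ is simple. Now fix an orthogonal $n$-simplex $S=\langle x_0;x_1,\dots,x_n\rangle$ (meaning $x_1,\dots,x_n$ are pairwise orthogonal) and apply Theorem~\ref{thm_canon}: for $0<t<1$,
\[
S=\bigsqcup_{k=0}^n\big((1-t)\underline S_k+t\,\overline S_{n-k}\big).
\]
For each $1\le k\le n-1$, the $k$-th piece is the Minkowski sum of a polytope in $\mathrm{span}(x_1,\dots,x_k)$ and one in the orthogonal complement $\mathrm{span}(x_{k+1},\dots,x_n)$, hence an orthogonal cylinder on which $\oZ$ vanishes; the extreme pieces ($k=0$ and $k=n$) are translates of $tS$ and $(1-t)S$. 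Simplicity, translation invariance and \eqref{eq_finite_add} then give $\oZ(S)=\oZ(tS)+\oZ((1-t)S)$. Replacing $S$ by $rS$ for arbitrary $r>0$ and writing $u=tr$, $v=(1-t)r$, this becomes the additive Cauchy equation $\oZ((u+v)S)=\oZ(uS)+\oZ(vS)$ on $(0,\infty)$; together with $\oZ(\{x_0\})=0$ and continuity, it forces $\oZ(tS)=t\,\oZ(S)$ for $t\ge 0$ and every orthogonal $n$-simplex $S$.

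To extend to an arbitrary $n$-dimensional $P\in\cP^n$, I would apply Lemma~\ref{lem_ortho} (with $z=0$, say) to produce orthogonal simplices $S_i, S_j'$ each with a vertex at the origin and with $P\sqcup\bigsqcup_i S_i\sim\bigsqcup_j S_j'$. Inspection of the inductive proof of Lemma~\ref{lem_ortho} shows that the rigid motions implicit in $\sim$ can be taken to be the identity — the one-dimensional base case produces literal disjoint unions, and coning from $z$ preserves this — so by simplicity and \eqref{eq_finite_add} one obtains the numerical identity $\oZ(P)+\sum_i\oZ(S_i)=\sum_j\oZ(S_j')$. The same identity holds after scaling by $t$; using the simplex $1$-homogeneity for the $S_i$'s and $S_j'$'s on the right then yields $\oZ(tP)=t\,\oZ(P)$ for every $n$-dimensional polytope (lower-dimensional polytopes being handled by simplicity). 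Finally, continuity of $\oZ$ and the Hausdorff density of $\cP^n$ in $\cK^n$ extend the identity to all $K\in\cK^n$.

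The principal obstacle, I expect, is the passage from orthogonal simplices to general polytopes: because $\oZ$ is not assumed rotation invariant, we must confirm that Lemma~\ref{lem_ortho} can be applied without any non-trivial rigid motion, so that translation invariance and finite additivity alone yield the desired numerical identity for $\oZ$.
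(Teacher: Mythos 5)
Your proof is correct and follows essentially the same approach as the paper: the canonical simplex decomposition of Theorem~\ref{thm_canon} reduces to the one-variable Cauchy equation on dilates of an orthogonal simplex, Lemma~\ref{lem_ortho} then transfers the resulting $1$-homogeneity to all full-dimensional polytopes via \eqref{eq_finite_add}, and continuity handles general convex bodies. Your concluding observation---that the relation $\sim$ produced by the proof of Lemma~\ref{lem_ortho} uses only translations, which matters because $\oZ$ is not assumed rotation invariant---is a valid point that the paper's proof leaves implicit.
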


\begin{proof}
Let $S$ be an $n$-dimensional orthogonal simplex in $\R^n$ and $0<t<1$. In the canonical simplex decomposition,
\begin{equation}\label{eq_canon}
S=\bigsqcup\limits_{k=0}^n \big((1-t)\underline S_k+t\, \overline S_{n-k}\big),
\end{equation}
the simplices $\underline S_k$ and $\overline S_{n-k}$ are orthogonal and lie in orthogonal subspaces for each $1\le k\le n-1$. 	Hence $(1-t)\underline S_k+t\, \overline S_{n-k}$ is an orthogonal cylinder for $1\le k\le n-1$. Since $\oZ$ vanishes on convex orthogonal cylinders, we obtain 
\[\oZ((1-t)\underline S_k+t\, \overline S_{n-k})=0\]
for $1\le k\le n-1$ and we also see that $\oZ$  is simple. By \eqref{eq_finite_add}, it now follows from \eqref{eq_canon} that
\[
\oZ(S)= \oZ((1-t)S)+ \oZ(t\,S).
\]
Let $r,s>0$. Setting $\alpha(r):=\oZ(r\tilde S)$ with $S= (r+s)\tilde S$ and $t=r/(r+s)$, we obtain 
\[\alpha(r+s)=\alpha(r)+\alpha(s)\]
for all $r,s>0$. Since $\oZ$ is continuous, so is $\alpha:(0,\infty)\to \R$. It follows that $\alpha$ is a continuous Cauchy function. Hence $\alpha$ is linear. Thus,
\begin{equation}\label{eq_simplex_hom}
\oZ(t \,S)=t\oZ(S)
\end{equation}
for every $t>0$ and every orthogonal simplex $S$.

\goodbreak	
For $P\in\cP^n$,  by Lemma \ref{lem_ortho} there are orthogonal simplices such that
\[
P\sqcup\bigsqcup_{i=1}^m S_i\sim \bigsqcup_{j=1}^{m'} S_j'.
\]
Therefore, for every $t>0$, using that $\oZ$ is simple, we obtain
\[\oZ(t\,P)+\oZ\Big(\bigsqcup_{i=1}^m t\,S_i\Big)= \oZ\Big(\bigsqcup_{j=1}^{m'} t\,S_j'\Big)\]
and
\begin{equation*}
\oZ(t\, P) = \sum_{j=1}^{m'}\oZ\big( t\,S_j'\big)-\sum_{i=1}^m\oZ\big( t\,S_i\big)
	=t \sum_{j=1}^{m'}\oZ(S_j')-t\sum_{i=1}^m \oZ( S_i)
	= t\oZ(P),
	\end{equation*}
where \eqref{eq_finite_add} and \eqref{eq_simplex_hom} are used.
Hence, $\oZ$ is homogeneous of degree 1 on polytopes. Since $\oZ$ is continuous, this concludes the proof.
\end{proof}

\goodbreak
For $1\le \ell\le n$, we say that $P\in\cP^n$ is a convex \emph{$\ell$-cylinder} if there are  subspaces $E_1, \dots, E_\ell$ of $\R^n$ which are pairwise orthogonal and at least one-dimensional and convex polytopes $P_1\subset E_1$, \dots, $P_\ell\subset E_\ell$ such that $P=P_1+ \dots+ P_\ell$. We say that $C\subset \R^n$ is an $\ell$-cylinder if it can be dissected into finitely many convex $\ell$-cylinders. The following result was established by Hadwiger \cite[Section 1.3.7]{Hadwiger:V}.

\begin{theorem}
For $P\in\cP^n$ and each $1\le \ell\le n$, there is an $\ell$-cylinder $C_\ell$ such that 
\[m P= \bigsqcup_{\ell=1}^n \bigsqcup_{\tau\in \cT_{\ell,m}}\tau (C_\ell)\]
for every integer $m\ge 1$, 
where $\,\cT_{\ell, m}$ is a set of at most $\binom{m}{\ell}$ translations. \label{thm_icylinder}
\end{theorem}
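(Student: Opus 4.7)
The plan is to argue by induction on $m$, with the canonical simplex decomposition (Theorem~\ref{thm_canon}) as the main tool. The base case $m=1$ is trivial: take $C_1:=P$, which is a $1$-cylinder with $E_1=\R^n$, and $C_\ell:=\varnothing$ for $\ell\ge 2$. Then $\binom{1}{1}=1$ translate of $C_1$ recovers $P$, and $\binom{1}{\ell}=0$ for $\ell\ge 2$, as required.

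For the inductive step, first reduce to the case when $P$ is an orthogonal simplex $S$, since the canonical decomposition produces Minkowski-sum cylinder pieces whose factors naturally lie in the orthogonal subspaces spanned by the edge directions of $S$. This reduction uses Lemma~\ref{lem_ortho} together with the additivity identity~\eqref{eq_finite_add} (the auxiliary orthogonal simplices appearing on both sides of the congruence $\sim$ can be absorbed via dissection additivity); the $\ell$-cylinders $C_\ell$ for $P$ are then assembled from those of the orthogonal simplices. Now for an orthogonal $S=\langle x_0;x_1,\dots,x_n\rangle$, apply Theorem~\ref{thm_canon} to $mS$ with parameter $t=1/m$ to obtain
\[
mS=(\mathrm{translate\ of\ }S)\sqcup(\mathrm{translate\ of\ }(m-1)S)\sqcup\bigsqcup_{k=1}^{n-1}\bigl((m-1)\underline{S}_k+\overline{S}_{n-k}\bigr),
\]
where each cylinder piece $Q_k:=(m-1)\underline{S}_k+\overline{S}_{n-k}$ is a Minkowski sum of orthogonal simplices in orthogonal subspaces of dimensions $k$ and $n-k$.

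Apply the inductive hypothesis twice: to $(m-1)S$ in $\R^n$, yielding $\binom{m-1}{\ell}$ translates of an $\ell$-cylinder $C_\ell^{\mathrm{old}}$ for each $\ell=1,\dots,n$; and to each $(m-1)\underline{S}_k$ in its $k$-dimensional subspace, yielding $\binom{m-1}{j}$ translates of a $j$-cylinder $D_j^{(k)}$ for $j=1,\dots,k$. Since $\overline{S}_{n-k}$ lies in the orthogonal complement, each Minkowski sum $D_j^{(k)}+\overline{S}_{n-k}$ is a $(j+1)$-cylinder in $\R^n$. Collecting contributions at level $\ell$ and applying Pascal's identity gives
\[
\binom{m-1}{\ell}+\binom{m-1}{\ell-1}=\binom{m}{\ell}
\]
translates of an $\ell$-cylinder $C_\ell$ in $mS$: the first summand comes from the recursion on $(m-1)S$, and the second from bundling the convex $\ell$-cylinders $D_{\ell-1}^{(k)}+\overline{S}_{n-k}$ (for $k=\ell-1,\dots,n-1$). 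Concretely, $C_1:=S$ is a single simplex (with the $Q_0$-translate of $S$ absorbed), and for $\ell\ge 2$, $C_\ell$ is the disjoint union of $n-\ell+1$ convex $\ell$-cylinders, one from each stratum $k\in\{\ell-1,\dots,n-1\}$, together with the corresponding components of $C_\ell^{\mathrm{old}}$.

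The main obstacle is to show that the bundle $C_\ell$ can be chosen independently of $m$ and that the translates from the different recursive branches combine coherently, so that each translate of the fixed $C_\ell$ contains exactly one convex piece of each of the $n-\ell+1$ types. This requires careful bookkeeping of the lattice positions of the convex pieces produced by the canonical decomposition: the choice $t=1/m$ is what aligns the stratum pieces with the subdivision of $(m-1)S$ inherited from the inductive hypothesis, but the explicit verification of the alignment across the two recursive branches is the technical heart of the argument.
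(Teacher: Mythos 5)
Your core machinery matches the paper's: canonical simplex decomposition with parameter $t=1/m$, a double induction (on $m$ and on dimension), and Pascal's identity $\binom{m-1}{\ell}+\binom{m-1}{\ell-1}=\binom{m}{\ell}$ to track the count. The paper likewise identifies the two recursive sources of $\ell$-cylinders as the decomposition of $(m-1)S$ and of the middle strata $(m-1)\underline S_k + \overline S_{n-k}$, and you both defer the coordination/bundling of the different types into a single translated $C_\ell$.

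However, your reduction step is flawed. You propose reducing to \emph{orthogonal} simplices via Lemma~\ref{lem_ortho}, and to ``absorb'' the auxiliary simplices using the additivity identity~\eqref{eq_finite_add}. That cannot work here: Lemma~\ref{lem_ortho} only provides a relation $P\sqcup\bigsqcup_i S_i\sim\bigsqcup_j S_j'$, i.e.\ equidecomposability by rigid motions, and~\eqref{eq_finite_add} is an identity between \emph{valuation values}, not a geometric operation. Knowing a translative cylinder decomposition of $\bigsqcup_j mS_j'$ does not yield one of $mP$, because the $\sim$-dissection chops the cylinders into pieces and then moves them by rigid motions (not translations), and there is no geometric ``subtraction'' that removes the contribution of the auxiliary $mS_i$ from a dissection. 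This device would suffice to derive the scalar consequence $\oZ(mP)=\sum_j\binom{m}{j}\oZ(C_j)$ used in the downstream homogeneous-decomposition proposition, but it does not prove Theorem~\ref{thm_icylinder} as stated, which is a genuine decomposition of the set $mP$. The paper's own reduction is simply to dissect $P$ into simplices and prove the statement for a simplex $S=\langle x_0;x_1,\dots,x_n\rangle$; you do not need Lemma~\ref{lem_ortho} at all. (You are right to be uneasy about orthogonality: the paper is terse about whether the factors of the cylinder pieces $T_\ell(j_1,\dots,j_\ell)$ lie in pairwise \emph{orthogonal} subspaces when $S$ is not orthogonal; but the correct fix is not to route through Lemma~\ref{lem_ortho}.)
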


\begin{proof}
It suffices to prove the statement for $P$ an $n$-dimensional simplex $S$. 
Let $S=\langle x_0;x_1,\dots,x_n\rangle$. For $i<j$, define the simplices 
$S_{i j}:=\langle x_i; x_{i+1},\dots,x_j\rangle$.
For $0<t<1$ and $1\le k <n$, set 
\[Q_k(t):=(1-t)\, S_{0k}+ t\, \Big(\sum_{i=0}^{k-1} x_i +S_{kn}\Big).\]
The canonical dissection into simplices for $m\,S$ and $t= \tfrac1m$ from Theorem \ref{thm_canon} gives
\[m S= m\,Q_0(\tfrac 1 m)\sqcup\cdots \sqcup\,m\,Q_n(\tfrac1 m).\]
We have $m\,Q_0(\tfrac 1 m)\approx S$ and  $m\,Q_n(\tfrac 1 m)\approx (m-1)\,S$ while 
\[m\,Q_\ell(\tfrac 1 m)\approx (m-1)\,S_{0\ell}+S_{\ell n}\] 
for $1\le \ell\le n-1$, where $\approx$ stands for equal up to translation. Applying the canonical simplex decomposition from Theorem \ref{thm_canon} to  $(m-1) S_{0 \ell}$, we obtain a decomposition into $\ell$-cylinders of the form
\[T_{\ell}\left(j_{1}, \ldots, j_{\ell}\right):=
S_{0 j_{1}}+S_{j_{1} n}+\dots+S_{j_{\ell-1} n}\]
for $1 \leq j_{1} <\dots < j_{\ell}=n$.
We use induction to show that each $T_{\ell}\left(j_{1}, \ldots, j_{\ell}\right)$ appears  $\binom{m}{\ell}$ times  in the decomposition. The statement is trivial for $m=1$. So, let $m>1$. The polytope $T_{\ell}\left(j_{1}, \ldots, j_{\ell}\right)$ appears when decomposing $(m-1) S$ and when decomposing $(m-1) S_{0 j_{\ell-1}}+S_{j_{\ell-1} n}$. By the induction assumption, it appears $\binom{m-1}{\ell}$ times in the first case and  $\binom{m-1}{\ell-1}$ times in the second case, which proves the claim. The $\ell$-cylinder $C_\ell$ is obtained as union of translates (with pairwise disjoint interiors) of the convex $\ell$-cylinders $T_{\ell}\left(j_{1}, \ldots, j_{\ell}\right)$ for $1 \leq j_{1} < \dots < j_{\ell}=n$.
\end{proof}

\subsection{The Homogeneous Decomposition Theorem}

The following result is fundamental in the theory of translation invariant valuations on convex bodies.

\begin{theorem}[McMullen]
If $\,\oZ:\cK^n\to \R$ is a continuous, translation invariant valuation, then 
\[\oZ=\oZ_0+\dots+\oZ_n\]
where $\,\oZ_j:\cK^n\to \R$  is a continuous, translation invariant and $j$-homogeneous valuation.
\label{thm_hom_decomp}
\end{theorem}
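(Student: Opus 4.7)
The plan is to establish, via the cylinder decomposition of Theorem~\ref{thm_icylinder}, that for each polytope $P$ the function $m \mapsto \oZ(mP)$ agrees on the positive integers with a polynomial in $m$ of degree at most $n$; to promote this to a polynomial identity $\oZ(tK) = \sum_{j=0}^n t^j \oZ_j(K)$ for all $t \ge 0$ and all $K \in \cK^n$ by passing from integer to rational dilates and then to $\cK^n$ via continuity and density; and finally to read off from this identity that the coefficients $\oZ_j$ are themselves continuous, translation invariant, $j$-homogeneous valuations. The main obstacle is the polynomiality step.

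First I would prove integer polynomiality on polytopes. Fixing $P \in \cP^n$ and applying Theorem~\ref{thm_icylinder}, there exist $\ell$-cylinders $C_1,\dots,C_n$ (depending on $P$) and translation sets $\cT_{\ell,m}$ with $|\cT_{\ell,m}| \le \binom{m}{\ell}$ such that $mP = \bigsqcup_{\ell=1}^n \bigsqcup_{\tau \in \cT_{\ell,m}} \tau(C_\ell)$. Since $\oZ$ is continuous it obeys the inclusion-exclusion principle, and since it is translation invariant each translate $\tau(C_\ell)$ contributes the same value. A careful bookkeeping of the lower-dimensional intersections produced by inclusion-exclusion, using translation invariance to absorb the particular translates into finitely many equivalence classes, shows that $\oZ(mP)$ is a combination of terms whose $m$-dependence is concentrated in binomial coefficients $\binom{m}{j}$ with $j \le n$. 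Since each $\binom{m}{j}$ is a polynomial of degree $j$ in $m$, the function $m \mapsto \oZ(mP)$ agrees on $\N$ with a polynomial of degree at most $n$. Applying the same argument to the rescaled polytope $(1/q)P$ extends the polynomial identity to all positive rationals with matching coefficients, and continuity of $\oZ$ forces $\oZ(tP) = \sum_{j=0}^n a_j(P)\,t^j$ for every $t \ge 0$ and every $P \in \cP^n$.

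To extract the components on all of $\cK^n$, I fix $n+1$ distinct positive scalars $t_0,\dots,t_n$ and use Vandermonde inversion to obtain constants $c_{ij}$ with $a_j(P) = \sum_{i=0}^n c_{ij}\,\oZ(t_i P)$ for every polytope $P$. Since the right-hand side is defined on all of $\cK^n$, I set
\[\oZ_j(K) := \sum_{i=0}^n c_{ij}\,\oZ(t_i K).\]
Each $\oZ_j$ is a continuous, translation invariant valuation, being a linear combination of valuations of the form $K \mapsto \oZ(t_i K)$ (the dilation $K \mapsto t_i K$ preserves unions, intersections, and translation invariance, and is continuous in the Hausdorff metric). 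By construction $\oZ(tP) = \sum_{j=0}^n t^j \oZ_j(P)$ for every $P \in \cP^n$, and density of $\cP^n$ in $\cK^n$ together with continuity of both sides in $K$ extends this identity to all $K \in \cK^n$. Setting $t=1$ gives the decomposition $\oZ = \oZ_0 + \dots + \oZ_n$, while comparing coefficients of $t^j$ (for fixed $s>0$) in the polynomial identity $\sum_j (st)^j \oZ_j(K) = \sum_j t^j \oZ_j(sK)$ yields $\oZ_j(sK) = s^j \oZ_j(K)$, the required $j$-homogeneity.
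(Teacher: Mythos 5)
Your later steps (the Vandermonde inversion defining $\oZ_j(K)$ as a fixed linear combination of $\oZ(t_iK)$, the passage from integer to rational dilates, the extension from $\cP^n$ to $\cK^n$ by continuity and density, and reading off $j$-homogeneity from the polynomial identity) match the paper's treatment essentially verbatim. The discrepancy is in the polynomiality step, and it is a genuine gap.

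The paper proves Theorem~\ref{thm_hom_decomp} only under the \emph{additional hypothesis that $\oZ$ is simple}; it explicitly states that the general case is due to McMullen and is not proved there. The reason is exactly the step you gloss over. For a simple valuation, the dissection $mP=\bigsqcup_{\ell}\bigsqcup_{\tau\in\cT_{\ell,m}}\tau(C_\ell)$ of Theorem~\ref{thm_icylinder} yields the exact identity $\oZ(mP)=\sum_{j=1}^n\binom{m}{j}\oZ(C_j)$ via \eqref{eq_finite_add}, because all the overlaps between pieces are lower-dimensional and contribute nothing. Once $\oZ$ is not simple, those overlaps contribute, and the inclusion-exclusion expansion \eqref{eq_in_ex} over the roughly $\sum_\ell\binom{m}{\ell}$ pieces produces a vast family of intersection polytopes. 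Your claim that ``a careful bookkeeping of the lower-dimensional intersections \dots shows that $\oZ(mP)$ is a combination of terms whose $m$-dependence is concentrated in binomial coefficients $\binom{m}{j}$'' is precisely the nontrivial content of McMullen's theorem: one must establish that the nonempty intersections of subfamilies of pieces fall into finitely many translation classes independent of $m$, and that the count of each class is a polynomial in $m$ of the correct degree. Nothing in Theorem~\ref{thm_icylinder} as stated delivers this, and it is not a routine computation; it is the reason the general result postdates Hadwiger and required McMullen's separate argument (which proceeds via Euler-type relations on polytopes rather than by direct bookkeeping on the cylinder dissection). As written, your proposal reproduces the paper's proof of the simple case and asserts the hard part of the non-simple case without proving it.
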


\noindent
We will prove the result under the additional assumption that $\oZ$ is simple. This version is due to Hadwiger. The general case was stated without proof by Hadwiger and proved by McMullen \cite{McMullen77}. We require the following proposition.

\begin{proposition}
If $\,\oZ:\cP^n\to \R$ is a simple, translation invariant valuation, then 
\[\oZ=\oZ_0+\dots+\oZ_n\]
where $\oZ_j:\cP^n\to \R$ for $0\le j\le n$ is a simple, translation invariant valuation that is homogeneous of degree $j$ with respect to multiplication by positive integers.
\end{proposition}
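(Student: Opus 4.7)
The plan is to exploit Theorem \ref{thm_icylinder} to show that $m\mapsto \oZ(mP)$ is a polynomial in $m$ of degree at most $n$ on positive integers, and then read off the $\oZ_j$ as its coefficients.

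Fix $P\in\cP^n$. Theorem \ref{thm_icylinder} produces convex $\ell$-cylinders $C_\ell=C_\ell(P)$, depending on $P$ but not on $m$, such that for every positive integer $m$ the dilate $mP$ dissects into $\binom{m}{\ell}$ translates of $C_\ell$ for $\ell=1,\dots,n$. Since $\oZ$ is simple, \eqref{eq_finite_add} applies to this dissection, and since $\oZ$ is translation invariant, all translates of each $C_\ell$ contribute the same value. Hence
\[
\oZ(mP) \;=\; \sum_{\ell=1}^{n}\binom{m}{\ell}\,\oZ\bigl(C_\ell(P)\bigr).
\]
As $\binom{m}{\ell}$ is a polynomial in $m$ of degree $\ell$ with zero constant term, the right-hand side is a polynomial in $m$ of degree at most $n$. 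I then define $\oZ_j(P)$ to be the coefficient of $m^j$ in this polynomial; by Lagrange interpolation, $\oZ_j(P)$ is a universal linear combination (with coefficients independent of $P$) of the values $\oZ(P),\oZ(2P),\dots,\oZ((n+1)P)$. Evaluation at $m=1$ then yields $\oZ=\oZ_0+\dots+\oZ_n$, with $\oZ_0\equiv 0$.

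The remaining task is to verify the four required properties of each $\oZ_j$, and each one is extracted from a corresponding identity for $\oZ$ via uniqueness of polynomial coefficients. For the valuation property, if $P\cup Q\in\cP^n$, then $m(P\cup Q)=mP\cup mQ$ and $m(P\cap Q)=mP\cap mQ$, so the valuation identity for $\oZ$ gives a polynomial identity in $m$ from which $\oZ_j(P)+\oZ_j(Q)=\oZ_j(P\cup Q)+\oZ_j(P\cap Q)$ is read off coefficient by coefficient. Translation invariance follows since $m(P+v)=mP+mv$ implies that the polynomials $m\mapsto \oZ(m(P+v))$ and $m\mapsto \oZ(mP)$ coincide. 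Simpleness is immediate: if $P$ is not full-dimensional, neither is $mP$, so $\oZ(mP)=0$ for all $m$ and every coefficient vanishes. Finally, for $k\in\N$, expanding $\oZ((mk)P)=\oZ(m(kP))$ in the two obvious ways and matching $m^j$-coefficients gives $\oZ_j(kP)=k^j\oZ_j(P)$.

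The genuine obstacle is concentrated in Theorem \ref{thm_icylinder}: once we know $\oZ(mP)$ depends polynomially on $m$ with degree at most $n$, every remaining property of the components follows by the same coefficient-matching mechanism. Note that continuity is nowhere used here, which is precisely why we can only obtain homogeneity with respect to positive integer dilations; upgrading to arbitrary $t>0$, as in Theorem \ref{thm_hom_decomp}, will require the continuity hypothesis to extend the resulting Cauchy-type function on $\N$ to a linear function on $(0,\infty)$.
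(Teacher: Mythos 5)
Your proposal is correct and follows essentially the same route as the paper's proof: apply Theorem \ref{thm_icylinder} to get the identity $\oZ(mP)=\sum_{\ell=1}^n\binom{m}{\ell}\oZ(C_\ell)$, read off $\oZ_j$ as the coefficient of $m^j$, and extract the valuation, invariance, simpleness, and integer-homogeneity properties by coefficient matching. The only cosmetic differences are that you spell out translation invariance and simpleness explicitly (the paper leaves them implicit) and you verify the valuation identity directly from $m(P\cup Q)=mP\cup mQ$ and $m(P\cap Q)=mP\cap mQ$ rather than via finite additivity on dissections, but the content is the same.
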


\goodbreak
\begin{proof}
Let $P\in\cP^n$. By Theorem \ref{thm_icylinder}, for $1\le j\le n$, there are $j$-cylinders $C_j$ such that
\begin{equation}\label{eq_mui}
\oZ(m\,P)=\sum_{j=1}^n \binom{m}{j} \oZ(C_j)
\end{equation}
for every $m\ge 1$. Note that $m\mapsto \oZ(m\, P)$ is a polynomial in $m$ of degree at most $n$.  We define $\oZ_j(P)$ as the coefficient of $m^j$ of this polynomial.

For $k,m\ge 1$, we obtain
\[\sum_{j=1}^n \oZ_j(k\,P)\,m^j =\oZ(k\,m\, P)=\sum_{j=1}^n \oZ_j(P)\,(k\,m)^j.\]
Therefore,
\[\oZ_j(k\,P)=k^j\,\oZ_j(P),\]
that is, $\oZ_j$ is homogeneous of degree $j$ with respect to multiplication with positive integers. To show that $\oZ_j$ is a valuation, it suffices to show that
\[\oZ_j(P\sqcup Q)=\oZ_j(P)+\oZ_j(Q).\]
This follows using \eqref{eq_mui} for $P\sqcup Q$, $P$ and $Q$ and comparing coefficients of $m^j$.
\end{proof}
\goodbreak

\begin{proof}[Proof of Theorem \ref{thm_hom_decomp} for simple valuations]
First, we show that for non-negative $\lambda\in\Q$, 
\[\oZ(\lambda P)= \sum_{j=1}^n \oZ_j(P)\,\lambda^j.\]
Indeed, let $\lambda=p/q$ with $p,q\in\N$. We have $q^j\oZ_j(\tfrac{1}{q}\,P)=\oZ_j(P)$ and
\[\oZ(\tfrac{p}{q}\,P)=\sum_{j=1}^n \oZ_j(\tfrac{1}{q}\, P)\,p^j= \sum_{j=1}^n \oZ_j(P) \big(\tfrac{p}{q}\big)^j.\]
So far, the valuations $\oZ_j$ are only defined on $\cP^n$. Note that the system of equations, 
\[\oZ(m\,P)=\sum_{j=1}^n \oZ_j(P)\, m^j\]
for $m=1, \dots, n$ with unknowns $\oZ_1(P), \dots, \oZ_n(P)$ has a unique solution, as the matrix is just the Vandermonde matrix. This gives us explicit representations,
\[\oZ_j(P)=\sum_{i=1}^n \alpha_{ij} \oZ(i\,P)\]
with suitable $\alpha_{ij}\in\R$ independent of $P$, which we use as definition of $\oZ_j$ on $\cK^n$. It is easy to see that the resulting functionals are continuous, translation invariant valuations that are homogeneous of degree~$j$. 
\end{proof}

For fixed $\bar{K}\in\cK^n$ and a given continuous, translation invariant valuation $\oZ$, Lemma \ref{lem_val_mink_add} shows that $K\mapsto \oZ_j(K +\bar{K})$ defines a continuous, translation invariant, $j$-homogeneous valuation on $\cK^n$. We may use this argument repeatedly and obtain the following theorem, where we call a function $\obZ:(\cK^n)^m\to\R$ \emph{symmetric} if
it is not changed when its arguments are permutated.

\begin{theorem}Let $1\le m\le n$. If $\,\oZ\colon\cK^n\to\R$ is  a continuous, translation invariant, $m$-homogeneous valuation, then there is a symmetric function $\obZ:(\cK^n)^m\to\R$ such that 
\[
\oZ(\lambda_1  K_1+\cdots +\lambda_k  K_k) = \sum_{\substack{i_1,\ldots, i_k\in \{0,\ldots,m\}\\i_1+\cdots+ i_k = m}} \binom{m}{i_1 \cdots i_k} \lambda_1^{i_1} \cdots \lambda_k^{i_k} \obZ(K_1 [i_1],\ldots,K_k [i_k])
\]
for every $k\ge 1$, every $K_1,\ldots,K_k\in\cK^n$ and every $\lambda_1,\ldots,\lambda_k\geq 0$.
Moreover,  $\obZ$ is  Minkowski additive in each variable and the map 
\[K\mapsto\obZ (K[j],K_{1},\ldots,K_{m-j})\]
is a continuous, translation invariant, $j$-homogeneous valuation for every $1\le j\le m$ and every $K_{1}, \dots, K_{m-j}\in\cK^n$.\label{thm_poly_sc}
\end{theorem}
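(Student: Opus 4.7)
The plan is to establish that
\[
p(\lambda_1, \dots, \lambda_k) := \oZ(\lambda_1 K_1 + \cdots + \lambda_k K_k)
\]
is a homogeneous polynomial of degree $m$ on $[0,\infty)^k$ for any $K_1,\dots,K_k\in\cK^n$, define $\obZ$ via its coefficients, and then verify symmetry, Minkowski additivity, and the valuation/homogeneity claim by matching polynomial coefficients. Uniqueness of the coefficients makes $\obZ(K_1[i_1],\dots,K_k[i_k])$ well defined, and symmetry follows immediately from commutativity of Minkowski addition.

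The crux is one-variable polynomiality: for fixed $K,L\in\cK^n$, the map $\nu\mapsto \oZ(K+\nu L)$ is a polynomial of degree at most $m$ on $[0,\infty)$. To prove this, set $\Psi_\nu(K):=\oZ(K+\nu L)$; by Lemma \ref{lem_val_mink_add}, $\Psi_\nu$ is a continuous, translation invariant valuation on $\cK^n$ for each $\nu\ge 0$, so Theorem \ref{thm_hom_decomp} decomposes it as $\Psi_\nu=\sum_{j=0}^n \Psi_{\nu,j}$ into $j$-homogeneous pieces. Combining
\[
\Psi_\nu(\mu K) = \sum_{j=0}^n \mu^j \Psi_{\nu,j}(K)
\]
with the identity $\Psi_\nu(\mu K) = \oZ(\mu K+\nu L) = \mu^m \Psi_{\nu/\mu}(K)$ from $m$-homogeneity of $\oZ$ (for $\mu>0$), then setting $\nu=1$ and substituting $t=1/\mu$, gives $\Psi_t(K) = \sum_{j=0}^n t^{m-j}\Psi_{1,j}(K)$ for every $t>0$. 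Continuity of $t\mapsto \Psi_t(K)$ at $t=0^+$ will then force $\Psi_{1,j}(K)=0$ whenever $j>m$, leaving
\[
\oZ(K+tL) = \sum_{j=0}^m t^{m-j}\Psi_{1,j}(K),
\]
a polynomial in $t$ of degree at most $m$ whose coefficients $K\mapsto \Psi_{1,j}(K)$ are continuous, translation invariant, $j$-homogeneous valuations.

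Multivariate polynomiality then follows by induction on $k$. The base case $k=1$ is $\oZ(\lambda K)=\lambda^m\oZ(K)$. For the inductive step, applying the one-variable result with $L=K_k$ and $K=\lambda_1 K_1+\cdots+\lambda_{k-1}K_{k-1}$ yields
\[
\oZ\Big(\sum_{i=1}^k \lambda_i K_i\Big) = \sum_{j=0}^m \lambda_k^{m-j}\,\Psi_{1,j}\Big(\sum_{i=1}^{k-1}\lambda_i K_i\Big),
\]
and since each $\Psi_{1,j}$ is a continuous, translation invariant, $j$-homogeneous valuation, the inductive hypothesis on $k-1$ bodies expands the inner term as a homogeneous polynomial of degree $j$ in $(\lambda_1,\dots,\lambda_{k-1})$. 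The overall $m$-homogeneity of $p$ forces it to be homogeneous of degree $m$, so only multi-indices with $i_1+\cdots+i_k=m$ survive, and reading off coefficients defines $\obZ(K_1[i_1],\dots,K_k[i_k])$.

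For Minkowski additivity, expanding $\oZ(\lambda_0(K+L)+\lambda_1 K_2+\cdots+\lambda_{m-1}K_m) = \oZ(\lambda_0 K+\lambda_0 L+\lambda_1 K_2+\cdots+\lambda_{m-1}K_m)$ in two ways---via the $m$-body expansion on the left in $(\lambda_0,\lambda_1,\dots,\lambda_{m-1})$ and via the $(m+1)$-body expansion on the right---and matching the coefficient of $\lambda_0\lambda_1\cdots\lambda_{m-1}$ will give $\obZ(K+L,K_2,\dots,K_m) = \obZ(K,K_2,\dots,K_m)+\obZ(L,K_2,\dots,K_m)$. For the last claim about $K\mapsto \obZ(K[j],K_1,\dots,K_{m-j})$: when $K,L,K\cup L\in\cK^n$, Lemma \ref{lem_val_mink_add} applied with $C=\lambda_1 K_1+\cdots+\lambda_{m-j}K_{m-j}$, together with $\lambda_0(A\cup B)=\lambda_0 A\cup\lambda_0 B$ and $\lambda_0(A\cap B)=\lambda_0 A\cap\lambda_0 B$, produces a polynomial identity in $(\lambda_0,\lambda_1,\dots,\lambda_{m-j})$ whose $\lambda_0^j\lambda_1\cdots\lambda_{m-j}$ coefficient delivers the valuation property; continuity and translation invariance are inherited analogously, and $j$-homogeneity follows from $\oZ(\lambda_0(tK)+C)=\oZ((t\lambda_0)K+C)$ after the substitution $\lambda_0\mapsto t\lambda_0$. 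The main technical obstacle will be the one-variable polynomiality step, where continuity at $t=0^+$ must be carefully exploited to truncate the Laurent-type expansion produced by the homogeneous decomposition of $\Psi_\nu$.
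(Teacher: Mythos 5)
Your proof is correct and follows the same outline the paper gives (which is only a remark preceding the theorem): combine Lemma~\ref{lem_val_mink_add} with the homogeneous decomposition Theorem~\ref{thm_hom_decomp} and iterate over the bodies. Your one-variable truncation argument at $t=0^+$ is a valid way to rule out the components of homogeneity exceeding $m$, though this also follows more directly: iterated application of Lemma~\ref{lem_val_mink_add} and Theorem~\ref{thm_hom_decomp} already gives polynomiality of $p(\lambda_1,\dots,\lambda_k)=\oZ(\lambda_1 K_1+\cdots+\lambda_k K_k)$, and the relation $p(t\lambda_1,\dots,t\lambda_k)=t^m p(\lambda_1,\dots,\lambda_k)$ coming from $m$-homogeneity of $\oZ$ then forces $p$ to be a homogeneous polynomial of total degree exactly $m$, after which the coefficient bookkeeping is as you describe.
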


\noindent
Here, we write $K[j]$ if $K$ appears $j$ times as an argument in $\obZ$ while a function $\oY: \cK^n\to \R$ is called \emph{Minkowski additive} if
\[\oY(K+L)= \oY(K)+\oY(L)\]
for every $K, L\in\cK^n$.  The special case $m=1$ in Theorem \ref{thm_poly_sc} leads to the following result. 

\begin{corollary}
If $\,\oZ\colon\cK^n\to\R$ is a continuous, translation invariant valuation that is homogeneous of degree 1, then  $\oZ$ is Minkowski additive. \label{cor_add} 
\end{corollary}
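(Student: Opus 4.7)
The plan is to invoke Theorem \ref{thm_poly_sc} with $m=1$, which is the immediate predecessor of the corollary and is tailored exactly for this situation. First I would note that a continuous, translation invariant, $1$-homogeneous valuation $\oZ$ satisfies the hypotheses of Theorem \ref{thm_poly_sc} with $m=1$, so there is a symmetric function $\obZ:\cK^n\to\R$ (here of only one variable) such that
\[
\oZ(\lambda_1 K_1 + \cdots + \lambda_k K_k) \;=\; \sum_{\substack{i_1,\ldots,i_k\in\{0,1\}\\ i_1+\cdots+i_k=1}} \lambda_1^{i_1}\cdots \lambda_k^{i_k}\, \obZ(K_1[i_1],\ldots,K_k[i_k])
\]
for all $k\ge 1$, all $K_1,\ldots,K_k\in\cK^n$, and all $\lambda_1,\ldots,\lambda_k\ge 0$. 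Since exactly one $i_j$ equals $1$ in each surviving term, the right-hand side collapses to $\sum_{j=1}^k \lambda_j\, \obZ(K_j)$.

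Next I would identify $\obZ$ with $\oZ$ itself. Taking $k=1$, $\lambda_1=1$, and an arbitrary $K\in\cK^n$ in the displayed formula gives $\oZ(K)=\obZ(K)$, so $\obZ=\oZ$ on $\cK^n$. Specialising now to $k=2$ and $\lambda_1=\lambda_2=1$, the formula yields
\[
\oZ(K_1+K_2) \;=\; \oZ(K_1)+\oZ(K_2)
\]
for arbitrary $K_1,K_2\in\cK^n$, which is exactly Minkowski additivity.

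There is no real obstacle, since the corollary is designed to be read off from Theorem \ref{thm_poly_sc} in the case $m=1$; the only thing to check is that the expansion degenerates correctly and that the function $\obZ$ supplied by the theorem agrees with $\oZ$, both of which are one-line verifications. Alternatively, one could bypass the symmetric-function machinery and argue directly: by $1$-homogeneity and the valuation property applied to $K+L$ and its translates, one reduces to the additivity statement, but invoking Theorem \ref{thm_poly_sc} is cleaner and matches the structure of the exposition.
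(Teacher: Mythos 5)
Your proof is correct and takes essentially the same route as the paper, which states the corollary as the special case $m=1$ of Theorem \ref{thm_poly_sc}. You spell out the one-line verifications (that $\obZ=\oZ$ via $k=1$, then $k=2$ with $\lambda_1=\lambda_2=1$) that the paper leaves implicit; equivalently, one could also read off Minkowski additivity directly from the ``Moreover'' clause asserting $\obZ$ is Minkowski additive in each variable.
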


Theorem \ref{thm_hom_decomp} allows to reduce questions on continuous and translation invariant valuations to questions on such valuations with a given degree of homogeneity $j\in\{0, \dots, n\}$. It is easy to see that every continuous, translation invariant, and 0-homogeneous valuation is a multiple of the Euler characteristic. For the degrees of homogeneity $j=n$ and $j=n-1$, we mention (without proofs) the following results by Hadwiger \cite{Hadwiger:V} and McMullen \cite{McMullen80}.

\begin{theorem}[Hadwiger]
A functional $\oZ:\cP^n\to \R$ is  a translation invariant and $n$-homogeneous valuation if and only if  there is a constant $c\in\R$ such that 
$$\oZ(P) =c\, V_n(P)$$
for every $P\in\cP^n$. \label{thm_hugo_n}
\end{theorem}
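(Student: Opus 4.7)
I plan to prove the theorem in three stages, with frame independence in the final stage being the main obstacle.

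\textbf{Stage 1 (Simplicity).} I would first show $\oZ$ vanishes on every polytope of dimension less than $n$. Since $\{0\}=2\{0\}$, $n$-homogeneity gives $\oZ(\{0\})=2^n\oZ(\{0\})$, so $\oZ(\{0\})=0$, and translation invariance extends this to $\oZ(\{x\})=0$ for all $x\in\R^n$. For $P$ of dimension $k$ with $1\le k<n$, I induct on $k$: applying the $k$-dimensional analogue of Theorem~\ref{thm_icylinder} inside the affine hull of $P$ and invoking the inductive hypothesis to kill lower-dimensional intersection terms yields $\oZ(mP)=\sum_{\ell=1}^{k}\binom{m}{\ell}\oZ(C_\ell)$, a polynomial in $m$ of degree at most $k$. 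Matching this with $\oZ(mP)=m^n\oZ(P)$ and using $k<n$ forces $\oZ(P)=0$.

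\textbf{Stage 2 (Reduction to convex $n$-cylinders).} For an $n$-dimensional $P\in\cP^n$, Stage~1 and translation invariance let me apply $\oZ$ to the decomposition from Theorem~\ref{thm_icylinder} and obtain
\[
m^n\oZ(P)=\sum_{\ell=1}^{n}\binom{m}{\ell}\oZ(C_\ell).
\]
Reading off the coefficient of $m^n$ gives $\oZ(P)=\oZ(C_n)/n!$; running the same identity with $V_n$ in place of $\oZ$ (itself a simple, translation invariant, $n$-homogeneous valuation) gives $V_n(P)=V_n(C_n)/n!$. Hence it suffices to prove that $\oZ(D)/V_n(D)$ is a universal constant $c$ for every convex $n$-cylinder $D$.

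\textbf{Stage 3 (Convex $n$-cylinders), main obstacle.} A convex $n$-cylinder is an orthogonal parallelepiped $D=\sum_{i=1}^n[0,w_i]$ whose edges span pairwise orthogonal lines. Stacking $D$ against $D+w_i$ (as in the proof of Proposition~\ref{prop_onehom}) together with simplicity shows that
\[
F(t_1,\dots,t_n):=\oZ\Bigl(\sum_{i=1}^n[0,t_iw_i]\Bigr)
\]
is Cauchy-additive in each $t_i$; combined with diagonal $n$-homogeneity $F(tt_1,\dots,tt_n)=t^n F(t_1,\dots,t_n)$, a functional-equation argument (first carried out for $n=2$ and then iterated) forces these Cauchy functions to be linear, yielding $F(t_1,\dots,t_n)=c(w_1,\dots,w_n)\,t_1\cdots t_n$. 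The main obstacle is to remove the dependence on the orthonormal frame. My plan here is to bootstrap from Stage~2: the ratio $\oZ(P)/V_n(P)=\oZ(C_n)/V_n(C_n)$ is intrinsic to $P$, yet distinct simplicial subdivisions of $P$ realize convex $n$-cylinders with substantively different orthonormal frames, and forcing these ratios to coincide across all subdivisions pins the orientation-dependent constants to a single value $c=\oZ([0,1]^n)$, giving $\oZ=c\,V_n$.
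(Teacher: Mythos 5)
The paper states Theorem~\ref{thm_hugo_n} without proof (``we mention (without proofs) the following results''), so there is no in-paper argument to compare your proposal against; I evaluate it on its own.

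Stages~1 and~2 are essentially sound: simplicity from degree mismatch is correct, and reading off the coefficient of $m^n$ in $\oZ(mP)=\sum_{\ell=1}^n\binom{m}{\ell}\oZ(C_\ell)$ reduces the claim to $n$-cylinders. The genuine gap is in Stage~3, and it lies \emph{before} the point you flag as ``the main obstacle.'' You assert that Cauchy-additivity of $F(t_1,\dots,t_n)=\oZ\bigl(\sum_i[0,t_iw_i]\bigr)$ in each variable, combined with the $n$-homogeneity $F(tt_1,\dots,tt_n)=t^nF(t_1,\dots,t_n)$, forces the single-variable Cauchy functions to be linear and hence $F=c(w_1,\dots,w_n)\,t_1\cdots t_n$. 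This is false. Take $n=2$ and let $\phi\colon\R\to\R$ be a nontrivial derivation, i.e.\ an additive function with $\phi(st)=s\phi(t)+t\phi(s)$ (such $\phi$ exist by a Hamel/transcendence-basis construction, vanish on $\Q$, and are nonzero on some transcendental). Set
\[
F(s,t):=st+s\phi(t)-t\phi(s).
\]
Then $F$ is additive in each variable separately, and the Leibniz identity gives $F(us,ut)=u^2F(s,t)$ for \emph{every} real $u>0$, so full $2$-homogeneity holds; yet $F(s,1)=s-\phi(s)$ is not linear because $\phi\not\equiv0$, so $F\neq cst$. The obstruction is that $F$ need not be \emph{symmetric} in its arguments (indeed $F(s,t)+F(t,s)=2st$, so symmetry alone would save you), and nothing about boxes in a single orthonormal frame enforces symmetry. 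Therefore your ``functional-equation argument (first carried out for $n=2$ and then iterated)'' does not close: eliminating the derivation-type pathology requires inputting constraints from the valuation on polytopes \emph{outside} the fixed frame --- in effect the comparison of different dissections that you relegate to a final ``bootstrap'' --- and this must come before, not after, you conclude linearity. As written, Stage~3 proves only that $F$ is multi-additive and homogeneous, not that $F=c\prod_i t_i$.
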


\goodbreak

\begin{theorem}[McMullen]
A functional $\oZ:\cK^n\to \R$ is  a continuous, trans\-lation invariant, and $(n-1)$-homogeneous valuation  if and only if  there is $\zeta\in C(\sn)$ such that 
$$\oZ(K) =\int_{\sn} \zeta(y)\,\D S_{n-1}(K,y)$$
for every $K\in\cK^n$. The function $\zeta$ is uniquely determined up to addition of the restriction of a linear function.\label{thm_mcmullen_n-1}
\end{theorem}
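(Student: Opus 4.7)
\emph{Proof plan.}  The ``if'' direction is routine: $K\mapsto S_{n-1}(K,\cdot)$ is a translation invariant, $(n-1)$-homogeneous, weakly continuous, measure-valued valuation on $\cK^n$ (in particular $S_{n-1}(K\cup L,\cdot)+S_{n-1}(K\cap L,\cdot)=S_{n-1}(K,\cdot)+S_{n-1}(L,\cdot)$ whenever $K\cup L\in\cK^n$), so integrating against any $\zeta\in C(\sn)$ yields a functional with the required properties.  The claim that restrictions of linear forms lie in the kernel follows from the classical identity $\int_\sn y\,\D S_{n-1}(K,y)=0$, equivalent to the translation invariance of $V_n$.

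For the ``only if'' direction, the strategy is polarization followed by an integral representation of Minkowski additive functionals.  Apply Theorem~\ref{thm_poly_sc} to obtain a symmetric, Minkowski-multi-additive polar form $\obZ:(\cK^n)^{n-1}\to\R$ with $\oZ(K)=\obZ(K[n-1])$.  For fixed $K_2,\dots,K_{n-1}\in\cK^n$, the slice $L\mapsto\obZ(L,K_2,\dots,K_{n-1})$ is a continuous, translation invariant, $1$-homogeneous valuation, hence Minkowski additive by Corollary~\ref{cor_add}.

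The essential technical input is the representation theorem: \emph{every continuous, translation invariant, Minkowski additive functional $\varphi:\cK^n\to\R$ is of the form $\varphi(L)=\int_\sn h_L\,\D\mu$ for a signed Borel measure $\mu$ on $\sn$, unique modulo restrictions of linear forms.}  Its proof extends $\varphi$ by linearity from the convex cone of support functions to their differences (a uniformly dense subspace of $C(\sn)$), verifies continuity in the uniform norm via the Hausdorff-continuity of $K\mapsto h_K$, and invokes Riesz--Markov.  Applied to each slice, it yields signed measures $\sigma_{K_2,\dots,K_{n-1}}$ with $\obZ(L,K_2,\dots,K_{n-1})=\int_\sn h_L\,\D\sigma_{K_2,\dots,K_{n-1}}$, depending Minkowski-multi-additively and symmetrically on $(K_2,\dots,K_{n-1})$.

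To conclude, the symmetry of $\obZ$ combined with the symmetry of mixed volumes, $\int h_{L_1}\,\D S(L_2,\dots,L_n,\cdot)=\int h_{L_n}\,\D S(L_1,\dots,L_{n-1},\cdot)$, identifies $\sigma_{K_2,\dots,K_{n-1}}$ with a signed mixed surface area measure determined by a fixed $\zeta\in C(\sn)$; $\zeta$ is pinned down by the specialization $K_2=\dots=K_{n-1}=B^n$ (where $S(B^n[n-2],\cdot)$ is proportional to the uniform measure on $\sn$), and its continuity is inherited from the continuity of $\oZ$ via structural formulas such as $\zeta(u)+\zeta(-u)=\oZ(D_u)$ for unit $(n-1)$-dimensional bodies $D_u$ with normal $u$.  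Setting $L=K_2=\dots=K_{n-1}=K$ then gives $\oZ(K)=\int_\sn\zeta\,\D S_{n-1}(K,\cdot)$.  The main obstacles are the representation theorem for Minkowski additive functionals (which requires extending beyond support functions to the full space $C(\sn)$) and the subsequent identification of the abstract signed measure with a classical mixed surface area measure, which relies on Minkowski's uniqueness theory.
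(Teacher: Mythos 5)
The paper states Theorem~\ref{thm_mcmullen_n-1} without proof (``we mention (without proofs) the following results by Hadwiger and McMullen''), so there is no proof of record here to compare against; I am therefore judging your attempt on its own merits.

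Your ``essential technical input'' is false. You assert that every continuous, translation invariant, Minkowski additive functional $\varphi\colon\cK^n\to\R$ has the form $\varphi(L)=\int_{\sn}h_L\,\D\mu$ for a signed Borel measure $\mu$, and you sketch the argument as: extend $\varphi$ linearly to differences of support functions, verify uniform-norm continuity ``via the Hausdorff-continuity of $K\mapsto h_K$'', and invoke Riesz--Markov. The middle step does not go through. Hausdorff-continuity of $\varphi$ on $\cK^n$ is uniform-norm continuity on the \emph{cone} of support functions; linearity on the difference space does not inherit boundedness from this, because $h_K-h_L$ can be uniformly tiny while $h_K$ and $h_L$ are individually enormous, and nothing controls $\varphi(K)-\varphi(L)$ in that regime. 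This is exactly the obstruction that the Goodey--Weil theorem (cited in the paper at Theorem~\ref{thm_mcmullen_n-1}) is designed to handle: continuous, translation invariant, $1$-homogeneous valuations are represented by \emph{distributions} of order $\le 2$ on $\sn$, not by measures, and the two classes genuinely differ. One can already see this for $n=2$, where your proposed representation theorem and the theorem being proved are about the same class (namely $1=(n-1)$-homogeneous valuations on $\cK^2$). Take $\zeta\in C(\mathbb{S}^1)$ nowhere differentiable and set $\oZ(K):=\int_{\mathbb{S}^1}\zeta\,\D S_1(K,\cdot)$; this is a perfectly good continuous, translation invariant, Minkowski additive valuation by the easy ``if'' direction. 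If $\oZ(K)=\int h_K\,\D\mu$ for a signed measure $\mu$, then testing against smooth differences of support functions forces $\zeta+\zeta''=\mu$ as distributions, hence $\zeta''$ would be a measure and $\zeta$ would be Lipschitz -- a contradiction. So the very theorem you are trying to prove refutes the lemma on which your proof rests.

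The downstream steps inherit the problem: the measures $\sigma_{K_2,\dots,K_{n-1}}$ need not exist, and the final identification step (``the symmetry of $\obZ$ identifies $\sigma$ with a signed mixed surface area measure determined by a fixed $\zeta$'') is asserted rather than argued -- it is essentially the conclusion of the theorem repackaged and would require Minkowski/Aleksandrov uniqueness arguments of its own even if the measures were available. A workable route, and the one in McMullen's original 1980 paper, is to first analyze the valuation on polytopes: translation invariance and $(n-1)$-homogeneity force $\oZ(P)=\sum_{F}f(u_F)\,V_{n-1}(F)$ over facets $F$ of $P$ with outer normals $u_F$, for some function $f$ on $\sn$; continuity of $\oZ$ is then used to show $f$ extends to a continuous $\zeta\in C(\sn)$, and the formula $\oZ(K)=\int_{\sn}\zeta\,\D S_{n-1}(K,\cdot)$ follows by approximation. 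Your polarization via Theorem~\ref{thm_poly_sc} and Corollary~\ref{cor_add} is a reasonable opening move, but the representation lemma you then invoke is not available, and the argument cannot be completed from it.
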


\noindent
Here, $S_{n-1}(K, \cdot)$ is the surface area measure of $K$. Continuous, translation invariant, 1-homogeneous valuations were classified by Goodey and Weil  \cite{GoodeyWeil1984}. 

\goodbreak
While a complete classification of continuous, translation invariant valuations on $\cK^n$ is out of reach, Alesker \cite{Alesker01} proved the following result.

\begin{theorem}[Alesker]
For $\,0\le j \le n$, the space of linear combinations of the valuations
$$\big\{K\mapsto V(K[j], K_1, \dots, K_{n-j}):  K_1, \dots, K_{n-j}\in\cK^n\big\}$$
is dense in the space of continuous, translation invariant, $j$-homogeneous valuations. \label{thm_dense}
\end{theorem}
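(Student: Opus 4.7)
The plan is to deduce Theorem~\ref{thm_dense} from Alesker's Irreducibility Theorem, which asserts that the Banach space $\operatorname{Val}_j$ of continuous, translation invariant, $j$-homogeneous valuations on $\cK^n$, equipped with the natural $\gln$-action $(g\cdot\oZ)(K):=\oZ(g^{-1}K)$, decomposes as $\operatorname{Val}_j = \operatorname{Val}_j^+ \oplus \operatorname{Val}_j^-$ into its even and odd parts under $K\mapsto -K$, each of which is a topologically irreducible $\gln$-representation (with one summand degenerating to zero when $j\in\{0,n\}$). Here $\|\oZ\|:=\sup\{|\oZ(K)|:K\subset B^n\}$ is a complete norm, finite by continuity and Blaschke selection, and the parity involution $\oZ\mapsto\oZ(-\,\cdot)$ is a continuous $\gln$-equivariant projection, so $\operatorname{Val}_j^\pm$ are closed $\gln$-invariant subspaces.

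Granting this, I would let $W_j\subseteq\operatorname{Val}_j$ be the closure of the linear span of the mixed-volume valuations $K\mapsto V(K[j], K_1,\ldots,K_{n-j})$ as the $K_i$ vary over $\cK^n$. The cases $j=0$ and $j=n$ are immediate from Theorem~\ref{thm_hugo_n} and $V(K[n])=n!\,V_n(K)$, so fix $1\le j\le n-1$. The key point is $\gln$-invariance of $W_j$: applying the transformation rule $V(gA_1,\ldots,gA_n) = |\det g|\,V(A_1,\ldots,A_n)$ to the arguments $g^{-1}K,\ldots,g^{-1}K, K_1,\ldots,K_{n-j}$ (after rewriting $K_i=g^{-1}(gK_i)$) yields
\[
\bigl(g\cdot V(\,\cdot\,[j], K_1,\ldots,K_{n-j})\bigr)(K) \;=\; |\det g|^{-1}\,V(K[j], gK_1,\ldots, gK_{n-j}),
\]
which is again a mixed volume up to a scalar. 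Hence $W_j^\pm:=W_j\cap\operatorname{Val}_j^\pm$ are closed $\gln$-invariant subspaces of the irreducible $\operatorname{Val}_j^\pm$, and by the Irreducibility Theorem each is either $\{0\}$ or the entire $\operatorname{Val}_j^\pm$.

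To conclude, I would exhibit one nonzero even and one nonzero odd mixed volume. Taking $K_1=\cdots=K_{n-j}=B^n$ and expanding the Steiner formula \eqref{eq_steiner} identifies $V(K[j], B^n[n-j])$ with a positive multiple of the even intrinsic volume $V_j(K)$. For the odd part, letting one of the $K_i$ be a non-centrally-symmetric body (for instance a triangle in $\R^2$ in the model case $n=2$, $j=1$, with the remaining $K_i$ equal to $B^n$) produces a mixed volume whose integral representation $\tfrac{1}{2}\int h_K\,\mathrm{d} S_{K_1}+\cdots$ is manifestly not even in $K$, since $h_{-K}(u)=h_K(-u)$ while the relevant surface area measure is not invariant under $u\mapsto -u$. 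Hence $W_j^+ = \operatorname{Val}_j^+$ and $W_j^- = \operatorname{Val}_j^-$, so $W_j=\operatorname{Val}_j$, as required.

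The main obstacle is of course not the above reduction but the Irreducibility Theorem itself, which is far from elementary: its proof combines McMullen's polynomial expansion (Theorem~\ref{thm_poly_sc}), the Klain and Schneider embeddings realizing $\operatorname{Val}_j^\pm$ as continuous sections of certain line bundles over the Grassmannian of $j$-planes, and analytic-continuation arguments on $\gln$-characters to rule out proper closed invariant subspaces. In a survey it would be quoted as a black box, and this is the single step at which a self-contained treatment would necessarily stop.
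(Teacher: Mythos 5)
The paper states this theorem without a proof—it is cited as Alesker's result and the text merely remarks that it ``is based on Alesker's so-called irreducibility theorem.'' Your sketch supplies the deduction that remark alludes to, and it is essentially correct: you reduce the density claim to topological irreducibility of $\operatorname{Val}_j^\pm$, verify that the closed span $W_j$ of the mixed-volume valuations is $\GL(n)$-invariant via the transformation rule, note that $-I\in\GL(n)$ forces $W_j$ to respect the even/odd splitting, and then exhibit a nonzero even representative ($V_j$, via Steiner) and a nonzero odd representative (a mixed volume against a nonsymmetric body). Two small points worth tightening: you should say explicitly that $W_j = W_j^+\oplus W_j^-$ \emph{because} $W_j$ is invariant under $\sigma=(-I)\cdot{}$ and the projectors $\tfrac12(\mathrm{Id}\pm\sigma)$ therefore preserve $W_j$; and for the odd part a one-line computation, e.g.\ $V(K,T)-V(K,-T)=\tfrac12\int h_K\,\mathrm{d}\bigl(S_1(T,\cdot)-S_1(-T,\cdot)\bigr)\not\equiv 0$ for a triangle $T$ in $\R^2$, would replace the slightly hand-wavy appeal to ``manifestly not even.'' As you correctly observe, the Irreducibility Theorem itself is the genuine content and must be quoted as a black box in a treatment at this level; given that, your reduction is sound and matches the route the paper indicates without spelling out.
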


\noindent
Here, $V(K[j], K_1, \dots, K_{n-j})$ is the mixed volume of $K\in\cK^n$ taken $j$ times and $K_1, \dots, K_{n-j}\in\cK^n$ while the topology on the space of continuous, translation invariant valuations is induced by the norm 
\[\Vert \oZ \Vert:= \sup\{\vert \oZ(K)\vert: K\in\cK^n,\, K\subseteq B^n\}.\] 
Alesker's result confirms a conjecture by McMullen \cite{McMullen80} and is based on Alesker's so-called irreducibility theorem \cite{Alesker01}, which has further far-reaching consequences.

\goodbreak

For simple valuations,  the following complete classification was established by Klain \cite{Klain95} and Schneider \cite{Schneider:simple}.

\begin{theorem}[Klain \& Schneider]
A functional $\oZ:\cK^n\to \R$ is  a  continuous, translation invariant, simple valuation if and only if  there are $c\in \R$ and an odd function $\zeta\in C(\sn)$ such that 
$$\oZ(K) =\int_{\sn} \zeta(y)\,\D S_{n-1}(K,y)+c\,V_n(K)$$
for every $K\in\cK^n$. The function $\zeta$ is uniquely determined up to addition of the restriction of a linear function.\label{thm_klain_schneider}
\end{theorem}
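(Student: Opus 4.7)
The \emph{if} direction is immediate: $V_n$ is simple, and for $K$ of dimension at most $n-1$ contained in a translate of $v^\perp$, the measure $S_{n-1}(K,\cdot)$ is supported on $\{v,-v\}$ with equal masses, so $\int\zeta\,\D S_{n-1}(K,\cdot)=0$ whenever $\zeta$ is odd. I focus on the converse.

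The first move is to apply McMullen's homogeneous decomposition (Theorem~\ref{thm_hom_decomp}) to write $\oZ=\oZ_0+\oZ_1+\cdots+\oZ_n$ with each $\oZ_j$ continuous, translation invariant, and $j$-homogeneous. The identity $\oZ(tK)=\sum_j t^j\oZ_j(K)=0$ for every lower-dimensional $K$ and every $t>0$ forces each $\oZ_j$ to be simple. The $0$-homogeneous $\oZ_0$ is a constant that annihilates $\{0\}$, hence $\oZ_0\equiv 0$; Hadwiger's Theorem~\ref{thm_hugo_n} gives $\oZ_n=c\,V_n$ for some $c\in\R$; and Theorem~\ref{thm_mcmullen_n-1} supplies $\zeta\in C(\sn)$, unique up to a linear summand, with $\oZ_{n-1}(K)=\int_{\sn}\zeta(y)\,\D S_{n-1}(K,y)$. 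To see that $\zeta$ must be odd, fix $v\in\sn$ and a full-dimensional $K_0\subset v^\perp$, and approximate $K_0$ by the thin cylinders $K_\eps:=K_0+[0,\eps v]$: as $\eps\to 0^+$, $K_\eps\to K_0$ in the Hausdorff metric and $S_{n-1}(K_\eps,\cdot)\to V_{n-1}(K_0)(\delta_v+\delta_{-v})$ weakly; continuity of $\oZ_{n-1}$ together with simplicity gives $\oZ_{n-1}(K_\eps)\to \oZ_{n-1}(K_0)=0$, so $\zeta(v)+\zeta(-v)=0$ for every $v\in\sn$ (the indeterminate linear summand being itself odd).

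The crux---and main obstacle---is to show $\oZ_j\equiv 0$ for $1\le j\le n-2$. My plan starts with a Klain-function reduction: for each $j$-dimensional linear subspace $E\subset\R^n$, the restriction $\oZ_j|_{\cK(E)}$ is a continuous, translation invariant, $j$-homogeneous valuation on $\cK(E)$, so Theorem~\ref{thm_hugo_n} applied inside $E$ identifies it with $f(E)\,V_j|_{\cK(E)}$ for some constant $f(E)\in\R$; since $j<n$, every $K\in\cK(E)$ is lower-dimensional in $\R^n$, and simplicity of $\oZ_j$ forces $f\equiv 0$. Splitting $\oZ_j=\oZ_j^++\oZ_j^-$ into even and odd parts, Klain's (external) uniqueness theorem---that the Klain function determines an even continuous translation invariant valuation---then yields $\oZ_j^+\equiv 0$. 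Disposing of the odd part $\oZ_j^-$ when $1\le j\le n-2$ is the true difficulty and constitutes Schneider's original contribution to the theorem: one polarizes via Theorem~\ref{thm_poly_sc} to obtain a symmetric Minkowski-additive form $\obZ$, tests it on configurations of segments whose Minkowski sums produce zonotopes of controlled rank, and combines the resulting antisymmetry relations with Alesker's density Theorem~\ref{thm_dense} to propagate vanishing to all continuous translation invariant valuations. This odd-part step is where the real work lies; once it is settled, the decomposition collapses to $\oZ=\oZ_{n-1}+c\,V_n$ in exactly the stated form, with uniqueness of $\zeta$ inherited from Theorem~\ref{thm_mcmullen_n-1}.
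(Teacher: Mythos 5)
The paper states this theorem without proof, merely citing Klain, Schneider, and Kusejko--Parapatits, so there is no in-paper argument to compare against. I therefore assess your proposal on its own merits.

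Your framework is sound: applying McMullen's homogeneous decomposition (Theorem~\ref{thm_hom_decomp}), observing that simplicity of $\oZ$ forces each $\oZ_j$ to be simple by homogeneity, killing $\oZ_0$ via $\oZ_0(\{0\})=0$, identifying $\oZ_n=c\,V_n$ and $\oZ_{n-1}=\int\zeta\,\D S_{n-1}(\cdot,\cdot)$, and extracting oddness of $\zeta$ from the thin-cylinder limit are all correct. The Klain-function reduction for $1\le j\le n-2$ is also the right idea for the even parts, modulo the fact that Klain's injectivity theorem for even valuations is itself a substantial result not proved or even stated in this paper.

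The genuine gap is that you do not prove the step you yourself identify as ``the crux,'' namely $\oZ_j^-\equiv 0$ for $1\le j\le n-2$. What you offer instead is a loose itinerary: polarize via Theorem~\ref{thm_poly_sc}, test on zonotopes of controlled rank, combine antisymmetry relations with Alesker's density Theorem~\ref{thm_dense}. This is not an argument; it is a list of tools, and the combination you describe is not how any known proof actually goes. Schneider's 1996 proof is a self-contained geometric dissection argument that predates Alesker's density theorem by five years and does not split into even and odd parts at all; conversely, if you want to invoke Alesker's irreducibility machinery, the conclusion that simple valuations of degree $j$ with $1\le j\le n-2$ vanish follows from representation-theoretic considerations, not from testing polarizations on segment configurations. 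Either route requires serious work beyond what you have written, so the proof as it stands is incomplete precisely at the point where, as you say, ``the real work lies.''
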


\noindent
Klain \cite{Klain95} used his classification of simple valuations in his proof of the Hadwiger theorem. For an alternate proof of Theorem \ref{thm_klain_schneider}, see \cite{KusejkoParapatits}. 

A classification of \emph{weakly continuous} and translation invariant valuations on $\cP^n$ was obtained by McMullen \cite{McMullen1983}. Here,  a valuation is weakly continuous if it is continuous under parallel displacements
of the facets of polytopes.

\section{Rigid Motion Invariant Valuations}\label{sec:rigid}

The following rigid motion invariant, simple valuations are called \emph{Dehn invariants}. For $P\in\cP^3$ and $\zeta:[0,\infty)\to[0,\infty)$ a Cauchy function with $\zeta(\pi)=0$, set
\[\oD_\zeta(P):= \sum V_1(E) \,\zeta(\alpha_P(E))\]
where the sum is taken over all edges $E$ of $P$ and $\alpha_P(E)$ is the dihedral angle of $P$ at $E$. It is not difficult to see that $\oD_\zeta$ is a rigid motion invariant, simple valuation on $\cP^3$ and  that $\oD_\zeta$ vanishes on cubes. The regular tetrahedron $T$ in $\R^3$ has the dihedral angle $\alpha:=\arccos (1/3)$ at every edge, and the ratio $\alpha/\pi$ is irrational. Hence there are Cauchy functions with $\zeta(\alpha)\ne 0$. Hence $\oD_{\zeta}(T)\ne 0$ for every regular simplex $T$. Since $\oD_\zeta$ is a rigid motion invariant, simple valuation, it follows from \eqref{eq_finite_add} that $T$ is not equi-dissectable to any cube. This shows that Hilbert's Third Problem has a negative answer (for an introduction to Hilbert's Third Problem and the dissection theory of polytopes, see \cite{Boltianskii}). In general, $\oD_\zeta$ is far from being continuous. 

\goodbreak
A complete classification of rigid motion invariant and continuous valuations on $\cK^n$ was obtained by Hadwiger \cite[Section 6.1.10]{Hadwiger:V} in his 
celebrated classification theorem. 

\begin{theorem}[Hadwiger]
A functional $\oZ:\cK^n\to \R$ is  a continuous, translation and rotation invariant valuation if and only if  there are constants $c_0, \ldots, c_n\in\R$ such that 
\[\oZ(K) =  c_0 V_0(K) +\dots+ c_n V_n(K)\]
for every $K\in\cK^n$. \label{thm_hugo}
\end{theorem}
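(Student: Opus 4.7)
The plan is to induct on $n$, using the homogeneous decomposition to handle each degree of homogeneity separately. The base case $n=1$ is precisely Corollary \ref{cor_one}, where rotation invariance is automatic. For the inductive step, assume the theorem in all dimensions less than $n$, and let $\oZ\colon\cK^n\to\R$ be a continuous, translation and rotation invariant valuation. By Theorem \ref{thm_hom_decomp}, $\oZ=\oZ_0+\cdots+\oZ_n$ with each $\oZ_j$ continuous, translation invariant, and $j$-homogeneous. For every rotation $\phi$, the map $K\mapsto\oZ_j(\phi K)$ is still $j$-homogeneous, so uniqueness of the homogeneous decomposition forces each $\oZ_j$ to inherit rotation invariance. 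It therefore suffices to classify continuous, translation and rotation invariant, $j$-homogeneous valuations for each $0\le j\le n$.

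The extreme degrees are immediate. For $j=0$, continuity at $tK\to\{0\}$ as $t\to 0$ combined with translation invariance gives $\oZ_0\equiv\oZ_0(\{0\})=c_0 V_0$. For $j=n$, Theorem \ref{thm_hugo_n} yields $\oZ_n=c_n V_n$. For an intermediate degree $0<j<n$, restrict $\oZ_j$ to $\cK(E)$ for an arbitrary subspace $E\subset\R^n$ of dimension $k$ with $j\le k<n$. The restriction is a continuous, translation and $\oO(E)$-invariant, $j$-homogeneous valuation on $\cK(E)\cong\cK^k$; by the inductive hypothesis in dimension $k$ it must equal a linear combination of $V_0,\dots,V_k$, but $j$-homogeneity collapses this to a single term $c(E)V_j$. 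Rotation invariance on $\R^n$ forces $c(E)=c_j$ independent of $E$. The analogous restriction argument with $\dim E<j$ shows that $\oZ_j$ vanishes on $\cK(E)$ (all surviving intrinsic volumes have degree strictly less than $j$), and $V_j$ vanishes there as well. Consequently $\oY:=\oZ_j-c_j V_j$ is a continuous, translation and rotation invariant, \emph{simple}, $j$-homogeneous valuation on $\cK^n$.

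By the Klain--Schneider classification (Theorem \ref{thm_klain_schneider}), $\oY(K)=\int_{\sn}\zeta(y)\,\D S_{n-1}(K,y)+c\,V_n(K)$ for some $c\in\R$ and some odd $\zeta\in C(\sn)$. The integral term is $(n-1)$-homogeneous and $V_n$ is $n$-homogeneous, while $\oY$ is $j$-homogeneous with $j<n$. If $j<n-1$, uniqueness of the homogeneous decomposition forces both terms to vanish, so $\oY\equiv 0$. If $j=n-1$, then $c=0$ and $\oY(K)=\int\zeta\,\D S_{n-1}(K,\cdot)$; rotation invariance gives $\int(\zeta\circ\rho-\zeta)\,\D S_{n-1}(K,\cdot)=0$ for every $\rho\in\son$ and every $K$, so by the uniqueness clause in Theorem \ref{thm_mcmullen_n-1} the function $\zeta\circ\rho-\zeta$ is the restriction of a linear functional. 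Averaging $\zeta$ over $\son$ against Haar measure produces a rotation-invariant (hence constant) function $\zeta^*$ differing from $\zeta$ by a linear function; since $\zeta$, and therefore $\zeta^*$, is odd, the constant must be zero, so $\zeta$ itself is the restriction of a linear function, and a second appeal to the uniqueness clause gives $\oY\equiv 0$. In every case $\oZ_j=c_j V_j$, completing the induction.

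I expect the main obstacle to be the step showing that $\oY:=\oZ_j-c_j V_j$ is actually simple, rather than merely vanishing on the $j$-dimensional flats from which $c_j$ is read off. This needs the inductive hypothesis on restrictions to subspaces of \emph{every} intermediate dimension, together with careful bookkeeping of how $j$-homogeneity selects a unique intrinsic volume from the induced classification. Once simplicity is in hand, Theorem \ref{thm_klain_schneider} collapses the remaining freedom, but the delicate sub-case $j=n-1$ still demands the rotation-averaging argument combined with the parity of $\zeta$ to annihilate the surface-integral degree of freedom.
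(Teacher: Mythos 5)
Your proof is correct, and it follows a genuinely different route from the one the paper presents. The paper proves Hadwiger's theorem by reducing it, via restriction to a hyperplane and an induction on dimension, to Proposition~\ref{prop_hugo_simple} (a continuous, translation and rotation invariant, \emph{simple} valuation is a multiple of $V_n$), which is then established by an elementary chain through orthogonal cylinders, the canonical simplex decomposition, Proposition~\ref{prop_onehom}, Corollary~\ref{cor_add}, and the mean-width characterization Theorem~\ref{thm_mean width}. You instead invoke McMullen's homogeneous decomposition (Theorem~\ref{thm_hom_decomp}) up front, reduce to a single degree $j$, use the dimensional induction to show that $\oY:=\oZ_j-c_jV_j$ is simple, and then collapse $\oY$ via the Klain--Schneider theorem (Theorem~\ref{thm_klain_schneider}) together with a rotation-averaging argument and the parity of $\zeta$ for the degree $j=n-1$. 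Your route is closer in spirit to Klain's 1995 proof (which the paper explicitly mentions as an alternate proof), and it is more modular; the trade-off is that it imports the Klain--Schneider theorem, which is considerably deeper than, and is not proved in, the paper, whereas the paper's chain through Proposition~\ref{prop_hugo_simple} is self-contained.

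Two small points. First, when you read off $c_j$ from restrictions to $k$-dimensional subspaces with $j\le k<n$, rotation invariance alone gives that $c(E)$ depends only on $\dim E$; the constancy across the intermediate dimensions then follows by restricting a $k$-dimensional $E'$ to a $j$-dimensional $E\subset E'$ and comparing the two inductive descriptions of $\oZ_j|_{\cK(E)}$. You already flag this ("subspaces of every intermediate dimension"), but the equality $c^{(j)}=\dots=c^{(n-1)}$ deserves an explicit line. Second, once you know $\oY$ is simple, continuous, translation and rotation invariant, you could bypass Klain--Schneider entirely by applying Proposition~\ref{prop_hugo_simple} directly to conclude $\oY=cV_n$, whence $c=0$ by $j$-homogeneity; this would shorten your proof and align the heavy machinery with what the paper actually develops. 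The $j=n-1$ rotation-averaging argument is correct as written --- the average of the linear functions $\zeta\circ\rho-\zeta$ over $\son$ is linear (continuity and compactness justify the interchange), $\zeta^*$ is constant hence even, and oddness of $\zeta$ kills the constant.
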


\noindent
Hadwiger \cite[Section 6.1.10]{Hadwiger:V} also obtained a complete classification of monotone increasing, translation and rotation invariant valuations by showing that any such valuation is a linear combination with non-negative coefficients of intrinsic volumes. McMullen \cite{McMullen1990} showed that every monotone increasing, translation invariant valuation is continuous. Hence the monotone version of Hadwiger's theorem is a simple consequence of Theorem \ref{thm_hugo}.

We present a variation of Hadwiger's original proof, which we got to know through lecture notes by Ulrich Betke. The main step is to prove the following result for simple valuations.

\goodbreak
\begin{proposition}
A functional $\oZ:\cK^n\to \R$ is  a continuous, translation and rotation invariant, simple valuation if and only if  there is  a constant $c\in\R$ such that 
\[\oZ(K) =  c \,V_n(K)\]
for every $K\in\cK^n$. \label{prop_hugo_simple}
\end{proposition}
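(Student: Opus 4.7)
I would prove this by induction on $n$. The base case $n = 1$ is immediate from Corollary \ref{cor_one}: any continuous, translation invariant valuation on $\cP^1$ has the form $c_0 V_0 + c_1 V_1$, and simplicity forces $c_0 = 0$, giving $\oZ = c_1 V_1 = c_1 V_n$.

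For the inductive step, I assume the proposition in dimension $n-1$ and work in dimension $n \ge 2$. The idea is to identify the candidate constant $c$ and reduce to a valuation vanishing on orthogonal cylinders. Fix the hyperplane $H = e_n^\perp$. For $P \in \cK(H)$, the map $I \mapsto \oZ(P+I)$ from compact intervals in $\R e_n$ is a continuous, translation invariant valuation on $\cK^1$ by Lemma \ref{lem_val_mink_add}; since $\oZ$ is simple and $P \subset H$, one has $\oZ(P+\{0\}) = \oZ(P) = 0$, so Corollary \ref{cor_one} yields $\oZ(P+I) = c_1(P) V_1(I)$ with $c_1(P) := \oZ(P+[0,e_n])$. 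The functional $c_1$ is itself a continuous, translation and (within $H$) rotation invariant, simple valuation on $\cK(H) \cong \cK^{n-1}$, so by the induction hypothesis $c_1(P) = c\, V_{n-1}(P)$ for some constant $c \in \R$, independent of the choice of $H$ by rotation invariance. Setting $\oZ' := \oZ - c\, V_n$ therefore produces a simple, continuous, translation and rotation invariant valuation with $\oZ'(P+I) = 0$ on every hyperplane-times-segment cylinder. Iterating the same factor-by-factor argument on a general orthogonal cylinder $P_E + P_F$ with $\dim F \ge 2$ (applying the induction hypothesis first to the $F$-factor and then to the $E$-factor, and testing at a unit cube, which is also a hyperplane-times-segment cylinder already treated) shows that $\oZ'$ vanishes on every convex orthogonal cylinder. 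By Proposition \ref{prop_onehom}, $\oZ'$ is $1$-homogeneous; by Corollary \ref{cor_add}, $\oZ'$ is Minkowski additive.

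The main obstacle is the final step: a nontrivial $\oZ'$ of this type must be ruled out when $n \ge 2$. I would use a rotation-averaging argument. For $K \in \cK^n$, form the Minkowski rotation mean $BK = \int_{\son} R K\, dR$, approximated in the Hausdorff metric by finite averages $\tfrac{1}{N} \sum_{i=1}^N R_i K$ with the $R_i$ becoming equidistributed on $\son$. Minkowski additivity together with $1$-homogeneity and rotation invariance give $\oZ'(\tfrac{1}{N} \sum_{i} R_i K) = \oZ'(K)$, so by continuity $\oZ'(BK) = \oZ'(K)$. Since $BK$ is a rotation-invariant convex body it must be a ball, say $BK = \bar r(K) B^n$, and then $\oZ'(K) = \bar r(K)\,\oZ'(B^n)$ by $1$-homogeneity. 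Applied to a segment $I$: simplicity forces $\oZ'(I) = 0$ (as $I$ is not full-dimensional for $n \ge 2$), while $\bar r(I) > 0$, so $\oZ'(B^n) = 0$ and hence $\oZ' \equiv 0$. This yields $\oZ = c\, V_n$, completing the induction.
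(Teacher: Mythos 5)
Your proposal follows essentially the same route as the paper's proof: an induction on $n$, reducing (via Lemma~\ref{lem_val_mink_add} applied in orthogonal complementary subspaces) to a valuation vanishing on orthogonal cylinders, then applying Proposition~\ref{prop_onehom} and Corollary~\ref{cor_add} to get $1$-homogeneity and Minkowski additivity, and finally using rotation averaging to force the remaining piece to vanish. The only cosmetic differences are that you split off the hyperplane-times-segment case first and then iterate, whereas the paper writes the general $E\oplus F$ splitting at once, and that you inline the rotation-averaging argument (which the paper packages as Theorem~\ref{thm_mean width}, relying on Theorem~\ref{thm_drehmittel} for the convergence of rotational Minkowski means that you invoke without reference).
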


\noindent
We first show how to deduce the Hadwiger theorem from this proposition and then describe its proof.
An alternate proof of the Hadwiger theorem is due to Dan Klain \cite{Klain95}. It can also be found in \cite{Klain:Rota} and \cite{Schneider:CB2}. Klain also uses the simple argument in the following subsection.

\goodbreak
\begin{proof}[Proof of the Hadwiger Theorem  using Proposition \ref{prop_hugo_simple}]

We use induction on the dimension $n$ and note that the statement is true for $n=1$ by Proposition~\ref{prop_trans1}. Assume that the statement is true in dimension $(n-1)$ and let $E$ be an $(n-1)$-dimensional linear subspace of $\R^n$. The restriction of $\oZ$ to $\cK(E)$ is a continuous, translation and rotation invariant valuation on $\cK(E)$. By the induction assumption, there are constants $c_0, \dots, c_{n-1}\in\R$ such that
\[\oZ= \sum_{j=0}^{n-1} c_j V_j\]
for every $K\in\cK(E)$. Define $\otZ:\cK^n\to \R$ by
\[\otZ:=\oZ-\sum_{j=0}^{n-1} c_j V_j\]
and note that $\otZ$ is a continuous, translation and rotation invariant valuation on $\cK^n$. Moreover, $\otZ$ is simple, as $\otZ$ vanishes on $\cK(E)$ and hence, because of its translation and rotation invariance, on all convex bodies contained in an affine hyperplane.  Using Proposition  \ref{prop_hugo_simple}, we obtain that there is a constant $c_n\in\R$ such that
\[\otZ(K) =c_n V_n(K)\]
for every $K\in\cK^n$. This concludes the proof. 
\end{proof}
\goodbreak

\subsection{A Characterization of the Mean Width}

For $K\in\cK^n$ and $u\in\sn$, the support function of $K$ in the direction $u$ is
\[h_K(u)=\sup_{x\in K}\langle x,u\rangle\]
and the width of $K$ in direction $u$ is $h_K(u)+h_K(-u)$. For the intrinsic volume $V_1$, which is defined in \eqref{eq_steiner}, we have
\[V_1(K)= \frac1{2\,\kappa_{n-1}} \int_{\sn} (h_K(u)+h_K(-u))\,\D \hm(u),\]
so $V_1(K)$ is proportional to the mean width of $K$.

We say that a convex body $M$ is a rotational Minkowski mean of $K\in\cK^n$ if there are rotations $\vartheta_1,\dots, \vartheta_m\in\son$ such that  
\[M= \frac1m \big( \vartheta_1 K + \dots + \vartheta_m K\big).\]
We require the following result due to Hadwiger \cite[Section 4.5.3]{Hadwiger:V}.

\begin{theorem}
For each $K\in\cK^n$, there exists a sequence of rotational Minkowski means of $K$ that converges to a centered ball. \label{thm_drehmittel}
\end{theorem}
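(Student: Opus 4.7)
The plan is to work with support functions and approximate the Haar integral over $\son$ by finite averages. For a rotational Minkowski mean $M=\tfrac1m(\vartheta_1K+\cdots+\vartheta_mK)$ the support function satisfies $h_M(u)=\tfrac1m\sum_{i=1}^m h_K(\vartheta_i^{-1}u)$ for $u\in\sn$, and Hausdorff convergence of convex bodies is equivalent to uniform convergence of their support functions on $\sn$. Let $\nu$ denote the normalized Haar measure on $\son$. Since the pushforward of $\nu$ under $\vartheta\mapsto\vartheta^{-1}u$ is the unique rotation invariant probability measure on $\sn$, for every $u\in\sn$,
\[
c:=\int_{\son} h_K(\vartheta^{-1}u)\,\D\nu(\vartheta)=\frac{1}{\hm(\sn)}\int_{\sn} h_K(v)\,\D\hm(v)
\]
is independent of $u$, and $c\ge 0$ since $h_K(v)+h_K(-v)$ is the width of $K$ in direction $v$.

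The key step is to show that, given $\eps>0$, there exist rotations $\vartheta_1,\dots,\vartheta_m\in\son$ (with possible repetitions) such that
\[
\Bigl|\tfrac1m\sum_{i=1}^m h_K(\vartheta_i^{-1}u)-c\Bigr|<\eps \qquad\text{for every }u\in\sn.
\]
To produce them, I would use that $(\vartheta,u)\mapsto h_K(\vartheta^{-1}u)$ is continuous on the compact set $\son\times\sn$, hence uniformly continuous, and partition $\son$ into finitely many measurable sets $A_1,\dots,A_N$ subordinate to an open cover by sufficiently small neighborhoods, so that $|h_K(\vartheta^{-1}u)-h_K(\eta^{-1}u)|<\eps/2$ for all $\vartheta,\eta\in A_i$ and all $u\in\sn$. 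Choosing representatives $\vartheta_i\in A_i$, the weighted sum $\sum_i \nu(A_i)\,h_K(\vartheta_i^{-1}u)$ lies within $\eps/2$ of $c$ uniformly in $u$. I would then replace each weight $\nu(A_i)$ by a rational $p_i/m$ with common denominator $m$ and $\sum_i p_i=m$, taking $m$ so large that the remaining error, bounded by $\max_{u\in\sn}|h_K(u)|\cdot\sum_i|p_i/m-\nu(A_i)|$, is less than $\eps/2$. Listing each $\vartheta_i$ with multiplicity $p_i$ produces a rotational Minkowski mean $M_\eps$ with $\sup_{u\in\sn}|h_{M_\eps}(u)-c|<\eps$, which places $M_\eps$ within Hausdorff distance $\eps$ of the centered ball $cB^n$.

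Taking $\eps=1/k$ yields the required sequence of rotational Minkowski means of $K$ converging to $cB^n$, completing the proof. The main obstacle is the quantitative passage from the Haar weights $\nu(A_i)$ to equal weights $1/m$ via rational approximation with a common denominator, while keeping the resulting error uniform in $u\in\sn$. This is essentially a Riemann-sum argument on the compact group $\son$ and is routine once a sufficiently fine partition is fixed, but it is the only step that demands explicit quantitative care.
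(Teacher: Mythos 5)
Your proposal is correct and follows exactly the route the paper sketches: the paper notes that $y\mapsto\int_{\son}h_{\vartheta K}(y)\,\D\vartheta$ is the support function of a centered ball and that the desired sequence ``can be obtained by a suitable discretization,'' referring to Schneider for details, and your argument is precisely that discretization (uniform continuity of $(\vartheta,u)\mapsto h_K(\vartheta^{-1}u)$ on $\son\times\sn$, a finite partition of $\son$ with representatives, and rational approximation of the Haar weights with a common denominator), carried out correctly.
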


\noindent
We remark that 
\[y\mapsto\int_{\son} h_{\vartheta K}(y)\,\D  \vartheta\]
is the support function of a centered ball associated with $K$, where integration is with respect to the Haar probability measure on $\son$. Hence the sequence from Theorem \ref{thm_drehmittel} can be obtained by a suitable discretization. For a complete proof, see, for example, \cite[Theorem 3.3.5]{Schneider:CB2}.

\goodbreak
\begin{theorem}[Hadwiger]
A functional $\oZ:\cK^n\to \R$ is  continuous, translation and rotation invariant, and Minkowski additive  if and only if  there is a constant $c\in\R$ such that 
\[\oZ(K) =c\, V_1(K)\]
for every $K\in\cK^n$. \label{thm_mean width}
\end{theorem}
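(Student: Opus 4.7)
The forward direction is immediate from the integral representation of the mean width displayed just before the theorem: $V_1$ is continuous and translation/rotation invariant by \eqref{eq_steiner}, and Minkowski additive because support functions satisfy $h_{K+L}=h_K+h_L$.

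For the converse, my plan has three moves. First, I would promote Minkowski additivity to positive $1$-homogeneity: the identity $\{0\}+\{0\}=\{0\}$ forces $\oZ(\{0\})=0$; then induction on Minkowski additivity yields $\oZ(m K)=m\,\oZ(K)$ for $m\in\N$; writing $K=k\cdot(\tfrac1k K)$ gives $\oZ(\tfrac{1}{k}K)=\tfrac1k\oZ(K)$ and hence the identity $\oZ(tK)=t\,\oZ(K)$ for all positive rationals $t$; continuity then extends this to every $t>0$. In particular, $\oZ(r B^n)=r\,\oZ(B^n)$ for every $r\ge 0$.

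Second, I would combine Minkowski additivity, $1$-homogeneity and rotation invariance to show that $\oZ$ is invariant under rotational Minkowski means: for any $\vartheta_1,\dots,\vartheta_m\in\son$,
\[
\oZ\Big(\tfrac1m\big(\vartheta_1 K+\cdots+\vartheta_m K\big)\Big)=\tfrac1m\sum_{i=1}^m\oZ(\vartheta_i K)=\oZ(K).
\]
Invoking Theorem \ref{thm_drehmittel}, I pick a sequence of such rotational Minkowski means of $K$ converging to a centered ball $r_K B^n$, where $r_K\ge 0$ depends only on $K$. Continuity of $\oZ$ then yields $\oZ(K)=\oZ(r_K B^n)=r_K\,\oZ(B^n)$.

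The third move identifies the scalar $r_K$. Since $V_1$ itself satisfies the same four hypotheses imposed on $\oZ$, the very same argument applied to $V_1$ with the \emph{same} approximating sequence produces $V_1(K)=r_K\,V_1(B^n)$ with the same $r_K$. As $V_1(B^n)>0$, the constant $c:=\oZ(B^n)/V_1(B^n)$ is well defined and $\oZ(K)=c\,V_1(K)$ for every $K\in\cK^n$. The main external input is Theorem \ref{thm_drehmittel}, which is available to me; the only delicate conceptual point is that the limiting ball $r_K B^n$ is determined by $K$ alone and not by the valuation used to test it, which is exactly what permits the direct comparison with $V_1$ in the final step.
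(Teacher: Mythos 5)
Your proposal is correct and follows essentially the same route as the paper: rotational Minkowski means converging to a centered ball (Theorem \ref{thm_drehmittel}), continuity, and comparison with $V_1$. The only difference is that you spell out the elementary step that Minkowski additivity together with continuity yields positive $1$-homogeneity, whereas the paper simply invokes $1$-homogeneity as understood.
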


\begin{proof}
For $K\in\cK^n$,  Theorem \ref{thm_drehmittel} implies that there exists a sequence $(K_j)$ of rotational Minkowski means of $K$ with $K_j\to r B^n$, where $r B^n$ is a centered ball of radius $r$, where $r$ depends on $K$. As every $K_j$ is of the form
\[K_j=\frac1m\big(\vartheta_1 K +\dots +\vartheta_m K\big)\]
with suitable rotations $\vartheta_1, \dots, \vartheta_m$,  we have $\oZ(K_j)= \oZ(K)$, as $\oZ$ is rotation invariant, Minkowski additive and homogeneous of degree 1. The continuity of $\oZ$ implies that 
\[\oZ(K)= \lim_{j\to\infty} \oZ(K_j)= \oZ(r B^n)= r\oZ(B^n).\] 
The first intrinsic volume, $V_1$, is continuous, translation and rotation invariant, and Minkowski additive. Hence we also have $V_1(K)= r\,V_1(B^n)$. Combined this gives $\oZ(K)= c\,V_1(K)$ with $c=  \oZ(B^n)/ V_1(B^n)$.
\end{proof}

\subsection{Proof of Proposition \ref{prop_hugo_simple}}

We use induction on the dimension $n$. The statement is true for $n=1$ by Proposition~\ref{prop_trans1}.

Let $n\ge2$ and assume that the statement is true for valuations defined on $\cK^k$ for $1\le k\le n-1$. Let $E$ and $F$ be orthogonal and complementary subspaces with $\dim E=k$ and $\dim F=n-k$. If we fix a convex body $K_F$ in $F$, then the functional 
\[K_E\mapsto \oZ(K_E+K_F)\]
is a valuation on $\cK(E)$ by Lemma \ref{lem_val_mink_add} which is easily seen to be simple and continuous. Moreover, it is invariant with respect to translations and rotations in $E$. Hence, by the induction assumption, there is $c(K_F)\in\R$ such that
\[\oZ(K_E+K_F)=c(K_F)\,V_k(K_E)\]
for every $K_E\in\cK(E)$. It follows from Lemma \ref{lem_val_mink_add} that $c: \cK(F)\to \R$ is a valuation, which is easily seen to be simple, continuous, translation and rotation invariant. Hence, by the induction assumption, there is $c_k\in \R$ such that
\begin{equation}\label{eq_EF}
\oZ(K_E+K_F)=c_k V_{n-k}(K_F)\,V_k(K_E)
\end{equation}
for every $K_E\in\cK(E)$ and $K_F\in\cK(F)$. Since $\oZ$ is translation and rotation invariant,  \eqref{eq_EF} holds for convex bodies $K_E$ and $K_F$ in any orthogonal and complementary subspaces $E$ and $F$ with $\dim E=k$. Evaluating on the unit cube, we obtain that $c_1=\cdots=c_{n-1}=:c$.

\goodbreak
Define $\otZ: \cK^n\to\R$ by
\[\otZ(K)= \oZ(K)- c\,V_n(K).\]
Note that $\otZ$ is a simple, continuous, translation and rotation invariant valuation that vanishes on orthogonal cylinders. By Proposition \ref{prop_onehom}, it is homogeneous of degree~1, and by Corollary~\ref{cor_add}, it is Minkowski additive. Using Theorem  \ref{thm_mean width}, we obtain that there is a constant $d\in\R$ such that
\[\otZ(K)= d\, V_1(K)\]
for every $K\in\cK^n$. Since $\otZ$ is simple and $n\ge 2$, we obtain that $d=0$ which concludes the proof of the theorem. \hfill$\Box$

\subsection{Valuations Invariant under Subgroups of $\on$}

For valuations invariant under the action of  subgroups of the orthogonal group, $\on$, Alesker \cite{Alesker00, Alesker07} obtained the following result.

\begin{theorem}[Alesker]
For a compact subgroup $G$ of \,$\on$, the linear space of continuous, translation and $G$ invariant valuations on $\cK^n$ is finite dimensional if and only if
$G$ acts transitively on $\sn$.
\end{theorem}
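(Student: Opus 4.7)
The plan is to treat the two implications separately. The reverse direction (necessity of transitivity) is elementary and uses only the earlier material in the excerpt; the forward direction (sufficiency of transitivity) reduces via the homogeneous decomposition to each degree of homogeneity, where the extreme cases $j\in\{0,n-1,n\}$ are again handled by results already present in the excerpt, leaving only $1\le j\le n-2$ as the genuine obstacle.

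For the reverse direction, I would argue contrapositively: assume $G$ does not act transitively on $\sn$ and produce an infinite-dimensional family of $G$-invariant valuations of degree $n-1$. By Theorem \ref{thm_mcmullen_n-1}, continuous, translation invariant, $(n-1)$-homogeneous valuations correspond bijectively to elements of $C(\sn)/L$, where $L\subset C(\sn)$ is the finite-dimensional space of restrictions of linear functions. Taking $G$-invariants gives $C(\sn)^G/L^G$. Since $G$ is compact and $\sn$ is connected, the orbit space $\sn/G$ is a connected compact Hausdorff space, and failure of transitivity means it has at least two points, hence (by Urysohn) a continuum of points. Therefore $C(\sn)^G\cong C(\sn/G)$ is infinite-dimensional, and quotienting by the finite-dimensional $L^G$ preserves this, giving the required infinite-dimensional space of $G$-invariant valuations.

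For the forward direction, Theorem \ref{thm_hom_decomp} lets me reduce to showing that for each $j$, the space $\mathrm{Val}_j^G$ of continuous, translation and $G$-invariant $j$-homogeneous valuations is finite-dimensional. The case $j=0$ gives only scalar multiples of the Euler characteristic; the case $j=n$ is Theorem \ref{thm_hugo_n}, giving only multiples of $V_n$; the case $j=n-1$ again uses Theorem \ref{thm_mcmullen_n-1} as above, and transitivity of $G$ on $\sn$ forces $C(\sn)^G$ to consist only of constants, so $\mathrm{Val}_{n-1}^G$ is one-dimensional modulo the contribution from $L^G$.

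The main obstacle is the intermediate range $1\le j\le n-2$, which is not covered by any classification stated in the excerpt. My plan is to invoke two deeper tools: the Klain embedding of the even part $\mathrm{Val}_j^+$ into $C(\mathrm{Gr}_j(\R^n))$ and the Schneider embedding of the odd part $\mathrm{Val}_j^-$ into sections of a natural line bundle over a partial flag manifold. Both embeddings are $\on$-equivariant, so $G$-invariant valuations correspond to $G$-invariant continuous (sections of) functions on homogeneous spaces. When $G\subset\on$ is compact and acts transitively on $\sn$, the Montgomery--Samelson--Borel classification restricts $G$ to a short list, and for each entry on the list one verifies that $G$ acts with finitely many orbits on every real Grassmannian and on the relevant flag manifolds; this forces the spaces of invariant sections to be finite-dimensional. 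A more conceptual route would use Alesker's irreducibility theorem (closely related to Theorem \ref{thm_dense}) to identify the even and odd parts of $\mathrm{Val}_j$ as irreducible $\gln$-modules, and then apply Peter--Weyl/Frobenius reciprocity to conclude finite-dimensionality of the $G$-invariants directly from the transitive action of $G$ on $\sn$. Either route handles the remaining degrees and completes the proof.
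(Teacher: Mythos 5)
The paper does not actually prove this theorem; it is stated as a cited result from Alesker's papers, so there is no paper proof for you to match. Judged on its own merits, your proposal is correct in one direction but has a real flaw in the other.

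Your proof of necessity is sound. The reduction via McMullen's description of $(n-1)$-homogeneous valuations by $C(\sn)/L$, the observation that for compact $G$ the invariants functor is exact (so $(C(\sn)/L)^G\cong C(\sn)^G/L^G$), and the topological fact that a non-degenerate orbit space $\sn/G$ is a connected compact Hausdorff space with more than one point, hence admits a continuous surjection onto $[0,1]$ — all of this works and yields an infinite-dimensional space of $G$-invariant $(n-1)$-homogeneous valuations when $G$ is not transitive.

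The sufficiency direction, however, contains a genuine error in the route you indicate as primary. It is simply false that every compact $G\subset\on$ acting transitively on $\sn$ has only finitely many orbits on every real Grassmannian. The standard counterexample is $G=U(m)\subset\on[2m]$: it acts transitively on $S^{2m-1}$, but on $\mathrm{Gr}_2(\R^{2m})$ the K\"ahler angle is a continuous $U(m)$-invariant taking all values in $[0,\pi/2]$, so there is a continuum of orbits. Yet the space of $U(m)$-invariant, translation invariant, continuous valuations of degree $2$ is finite-dimensional (this is the starting point of Hermitian integral geometry). Thus finite-dimensionality of $(\mathrm{Val}_j)^G$ cannot be explained by orbit-counting on $\mathrm{Gr}_j$; the Klain image is a proper $\GL(n,\R)$-submodule of $C(\mathrm{Gr}_j)$, and the reason is the rigidity of that submodule, not the coarseness of the orbit stratification. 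Your alternative route — irreducibility of $\mathrm{Val}_j^{\pm}$ together with Peter--Weyl/Frobenius reciprocity — is the correct conceptual direction and is indeed what Alesker's proof is built on, but as written it is only a pointer: transitivity of $G$ on $\sn$ does not "directly" force finite-dimensionality of the $G$-invariants of an irreducible admissible $\GL(n,\R)$-module, and the argument requires a substantive additional step (admissibility of $\mathrm{Val}_j$ as an $\on$-module combined with a careful multiplicity analysis tied to the Klain/Schneider embeddings). As it stands, the sufficiency half has a false lemma in the concrete route and a missing argument in the abstract one.
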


\noindent
As the classification of such subgroups $G$ is known, it is a natural task (which was already proposed in \cite{Alesker00}) to find bases for spaces of continuous, translation and $G$ invariant valuations for all such subgroups  (see  \cite{Alesker01, Bernig2009, Bernig:Fu} for some of the contributions).

\subsection{An Application of the Hadwiger Theorem}

The following result is a special case of the principal kinematic formula, which is due to Blaschke, Chern, Federer and Santal\'o (see \cite{Klain:Rota,SchneiderWeil}). We use integration with respect to the Haar measure on $\sont$, and the normalization is chosen so that on $\son$ we have the Haar probability measure, and translations are identified with $\R^n$ with the standard Lebesgue measure.

\goodbreak
\begin{theorem}
For $K, L\in\cK^n$,
\[\int_{\phi\in \sont} V_0(K\cap \phi L)\, \D\phi= \sum_{i=0}^n  \frac{\kappa_i\,\kappa_{n-i}}{\binom{n}{i}\,\kappa_n}\,V_i(K)\,V_{n-i}(L).\]
\end{theorem}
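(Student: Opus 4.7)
The plan is to apply Hadwiger's theorem (Theorem \ref{thm_hugo}) twice, first in $K$ and then in $L$. For fixed $L\in\cK^n$, define
\[
\oZ_L(K):=\int_{\sont}V_0(K\cap\phi L)\,\D\phi.
\]
The first step is to verify that $\oZ_L$ is a continuous, translation and rotation invariant valuation on $\cK^n$. For the valuation property, observe that $K\mapsto V_0(K\cap\phi L)$ is a valuation for each fixed $\phi$, since $V_0\equiv 1$ on nonempty convex bodies and $V_0(\varnothing)=0$, and integrating preserves this. Translation and rotation invariance in $K$ follow from the invariance of Haar measure on $\sont$ under left-multiplication. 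For continuity, note that the integrand is bounded by $1$ and supported on the compact set $\{\phi:\phi L\cap K\ne\varnothing\}$, so if $K_j\to K$ in the Hausdorff metric, dominated convergence applies once one knows that $V_0(K_j\cap\phi L)\to V_0(K\cap\phi L)$ for almost every $\phi$; this holds because the set of $\phi$ for which $K$ and $\phi L$ merely touch without intersecting in their interiors has Haar measure zero. Hadwiger's theorem then yields coefficients $c_i(L)\in\R$ with
\[
\oZ_L(K)=\sum_{i=0}^n c_i(L)\,V_i(K).
\]

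Next, I would show that each $c_i$ is a continuous, translation and rotation invariant valuation on $\cK^n$ in its own right. Choosing $n+1$ convex bodies $K_0,\dots,K_n$ for which the matrix $\bigl(V_i(K_j)\bigr)_{i,j=0}^n$ is invertible (for instance, balls of distinct positive radii), each $c_i(L)$ becomes a fixed linear combination of the functionals $L\mapsto\oZ_L(K_j)$. Each of these is a continuous, translation and rotation invariant valuation in $L$ by the same reasoning as above, now with $K$ and $L$ swapped and using the invariance of Haar measure under the inversion $\phi\mapsto\phi^{-1}$ (which exchanges $K\cap\phi L$ with $\phi^{-1}K\cap L$). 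A second application of Hadwiger's theorem then gives constants $c_{ij}\in\R$ with
\[
\oZ_L(K)=\sum_{i,j=0}^n c_{ij}\,V_i(K)\,V_j(L).
\]
Writing $\phi=(\rho,v)\in\son\ltimes\R^n$, the substitution $v\mapsto tv$ yields $\oZ_{tL}(tK)=t^n\oZ_L(K)$, and comparing with $V_i(tK)V_j(tL)=t^{i+j}V_i(K)V_j(L)$ forces $c_{ij}=0$ unless $i+j=n$.

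To determine the remaining constants $c_i:=c_{i,n-i}$, I would evaluate both sides on $K=R\,B^n$ and $L=r\,B^n$ with $R,r>0$. The integrand is rotation-invariant in $\phi$, so the integral collapses to $V_n((R+r)B^n)=\kappa_n(R+r)^n=\kappa_n\sum_i\binom{n}{i}R^i r^{n-i}$. Combined with the formula $V_k(B^n)=\binom{n}{k}\kappa_n/\kappa_{n-k}$ that follows from the Steiner formula \eqref{eq_steiner}, matching coefficients of $R^i r^{n-i}$ yields $c_i=\kappa_i\kappa_{n-i}/(\binom{n}{i}\kappa_n)$, which completes the proof. The main obstacle is the continuity of $\oZ_L$ (and its analogue in $L$), since $V_0$ itself is not continuous on $\cK^n$; the argument hinges on showing that the set of ``tangential'' configurations $\phi$ forms a Haar null set, which requires a little care but is standard.
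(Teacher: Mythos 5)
Your proposal is correct and follows essentially the same route as the paper: apply Hadwiger's theorem in $K$ for fixed $L$, extract the coefficients $c_i(L)$, show each is itself a continuous rigid motion invariant valuation in $L$, apply Hadwiger again, and pin down the constants by evaluation. You have usefully filled in details the paper leaves implicit --- the dominated-convergence continuity argument (where the precise observation is that, for each rotation $\rho$, the ``touching'' translations form the boundary of the convex body $K - \rho L$, hence a set of measure zero), the scaling substitution showing $c_{ij}=0$ unless $i+j=n$, and the explicit evaluation on balls $R\,B^n$, $r\,B^n$ using $V_k(B^n)=\binom{n}{k}\kappa_n/\kappa_{n-k}$; the Vandermonde-type extraction of $c_i(L)$ is a concrete realization of the paper's appeal to the homogeneity of intrinsic volumes.
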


\begin{proof}
For $K,L\in\cK^n$, set 
\[\oZ(K,L):=\int_{\phi\in \sont} V_0(K\cap \phi L)\, \D\phi.\]
For $L\in\cK^n$, it is easy to see that 
$K\mapsto \oZ(K,L)$
is a continuous, translation and rotation invariant valuation on $\cK^n$. By Theorem \ref{thm_hugo}, there are  $c_0(L),\dots, c_n(L)\in\R$
such that
\[\oZ(K,L)= \sum_{j=0}^n c_j(L)V_j(K)\]
for every $K,L\in \cK^n$. For given $K\in\cK^n$, the functional $L\mapsto \oZ(K,L)$ is also a continuous, translation and rotation invariant valuation on $\cK^n$. Combined with the homogeneity of intrinsic volumes, it follows that also each of the maps $L\mapsto c_j(L)$ for $0\leq j \leq n$ is a continuous, translation and rotation invariant valuation. By Theorem~\ref{thm_hugo}, there are $c_{0j},\dots, c_{nj}\in\R$ such that
\[\oZ(K,L)= \sum_{i,j=0}^n c_{ij} V_i(K) \,V_j(L)\]
for every $K,L\in\cK^n$. The constants $c_{i j}$ can be determined by evaluating this formula for suitable convex bodies $K$ and $L$.
\end{proof}

\section{Valuations on Function Spaces}
We will extend and generalize valuations from (subsets of) the space of convex bodies to function spaces. Let $\funRn$ denote the space of all real-valued functions on $\R^n$.

\subsection{Definition}
One way to represent the set of convex bodies, $\cK^n$, within $\funRn$ is to assign to each body $K\in\cK^n$ its characteristic function $\chi_K\in\funRn$, which is given by
$$\chi_K(x)=\begin{cases}
1\quad &\text{for }x\in K\\
0\quad &\text{for }x\notin K.
\end{cases}$$
Using this embedding we assign to each functional $\oZ:\funRn\to\R$ the functional $\otZ:\cK^n\to\R$ by setting
$$\otZ(K):=\oZ(\chi_K)$$
for every $K\in\cK^n$.

\goodbreak
We now ask which conditions $\oZ$ needs to satisfy so that $\otZ$ is a valuation. By the definition of $\otZ$ we have
\begin{align*}
\oZ(\chi_K) + \oZ(\chi_L) &= \otZ(K)+\otZ(L)\\
&= \otZ(K\cap L) + \otZ(K\cup L)\\
&= \oZ(\chi_{K\cap L})+\oZ(\chi_{K\cup L})\\
&= \oZ(\chi_K \wedge \chi_L) + \oZ(\chi_K \vee \chi_L)
\end{align*}
for every $K,L\in\cK^n$ such that also $K\cup L\in\cK^n$. Here, $f\vee g$ and $f\wedge g$ denote the pointwise maximum and minimum of $f,g\in\funRn$, respectively. This motivates the following definition.

\goodbreak

\begin{definition}
\label{def:val_fun_space}
Let $X\subseteq \funRn$. A map $\oZ:X\to\R$ is a \emph{valuation} if
$$\oZ(f)+\oZ(g)=\oZ(f\wedge g)+\oZ(f\vee g)$$
for every $f,g\in X$ such that also $f\wedge g,f\vee g\in X$.
\end{definition}

\goodbreak

\noindent
Similarly, one may define valuations with values in any Abelian semigroup. Examples include vector-valued, matrix-valued, measure-valued valuations, and even Minkowski valuations, which are valuations with values in the space of convex bodies equipped with Minkowski addition.

We remark that there are various ways to represent convex bodies within the space of real-valued functions on $\R^n$. For many  
such representations, pointwise maxima and minima of functions correspond to 
unions and intersections of bodies.  
Repeating the above steps  for other 
embeddings of $\cK^n$ into $\funRn$   
leads to the same definition  
of valuations on (subsets of) $\funRn$.

\subsection{First Examples}

As for valuations on convex bodies, the simplest valuation $\oZ:X\to\R$ for any $X\subseteq \funRn$ is of the form $\oZ(f)=c$ with $c\in\R$. It is straightforward to check that this defines a valuation, although it may not be the most interesting one.

Next, let $X\subseteq\{f\in\funRn\colon \vert \int_{\R^n} f(x)\,\D x \vert < \infty\}$, where we consider Lebesgue integrals. Define $\oZ:X\to\R$ as
$$\oZ(f):=\int_{\R^n} f(x)\,\D x.$$
We claim that $\oZ$ is a valuation. Let $f,g\in X$.
Since $\R^n$ can be represented as the disjoint union
$$\R^n = \{f\geq g\} \sqcup  \{f<g\},$$
where $\{f\geq g\}:=\{x\in\R^n\colon f(x)\geq g(x)\}$ and $\{f<g\}$ is defined accordingly, we have
\begin{align}
\label{eq:int_is_a_val}
\begin{split}
\oZ(f) + \oZ(g)
&= \int_{\R^n} f(x) \,\D x + \int_{\R^n} g(x) \,\D x\\
&= \int_{\{f\geq g\}} f(x) \,\D x + \int_{\{f< g\}} f(x) \,\D x + \int_{\{f\geq g\}} g(x)\,\D x + \int_{\{f < g\}} g(x) \,\D x\\
&= \int_{\{f\geq g\}} (f\vee g)(x) \,\D x + \int_{\{f < g\}} (f\wedge g)(x) \,\D x\\
&\quad + \int_{\{f\geq g\}} (f\wedge g)(x) \,\D x + \int_{\{f < g\}} (f\vee g)(x) \,\D x\\
&= \int_{\R^n} (f\vee g)(x)\,\D x + \int_{\R^n} (f\wedge g)(x) \,\D x\\[4pt]
&= \oZ(f\vee g) + \oZ(f\wedge g).
\end{split}
\end{align}
Note that this valuation often plays the role of volume. For example, the Pr\'ekopa--Leindler inequality is a functional version of the Brunn--Minkowski inequality, where the usual $n$-dimensional volume on subsets of $\R^n$ is replaced by the integral of a function (see, for example, \cite{Gardner:BM}).

Further valuations on suitable function spaces are given by the map $f\mapsto f(\bar{x})$ for some fixed $\bar{x}\in\R^n$, the $p$th power of the $L_p$ norm, the moment matrix, the Fisher information matrix or the LYZ body. We refer to \cite{Baryshnikov_Ghrist_Wright_AiM_2013,Ludwig:SobVal, Ludwig:Fisher, Tsang:Lp, Tsang:Mink, Wang:SemiVal} for more details.

\subsection{A Short Overview of Results}

Defining analytic analogs of geometric concepts is, of course, not a new problem (see, for example, \cite[Chapter~9]{artstein_gianonopoulos_milman_II} and  \cite[Sections 9.5 and 10.15]{Schneider:CB2}). 
This section focuses on results where analogs of important valuations in geometry were found on function spaces.

The survey \cite{Ludwig:FunVal} describes some of the first results on valuations on function spaces. Among them are Tsang's characterization of valuations on $L_p$ spaces and $L_p$ stars \cite{Tsang:Lp,Tsang:Mink}, as well as the characterization of the moment matrix \cite{Ludwig:Covariance}, Fisher information matrix \cite{Ludwig:Fisher}, the LYZ body and its projection body \cite{Ludwig:SobVal}.
In the following, we will give a brief overview of some of the results not included in \cite{Ludwig:FunVal}.

\subsubsection{Quasi-concave Functions}
A real-valued function $f$ on $\R^n$ is called \emph{quasi-concave} if it is non-negative and if its superlevel sets,
$$\{f\geq t\}:=\{x\in\R^n\colon f(x)\geq t\},$$
are either empty or convex bodies for every $t>0$.  A natural approach to extending intrinsic volumes from convex bodies to quasi-concave functions is to integrate intrinsic volumes of the level sets of a given function with respect to suitable measures.  The following result was proved in \cite{Colesanti-Lombardi}.

\begin{theorem}
A map $\oZ$ is a rigid motion invariant, continuous and increasing valuation on the space of quasi-concave functions on $\R^n$ if and only if there are measures $\nu_i\in \cN_i$ for $0\leq i \leq n$ such that
$$\oZ(f)=\sum_{i=0}^n \int_{[0,\infty)} V_i(\{f\geq t\}) \,\D \nu_i(t)$$
for every quasi-concave $f:\R^n\to\R$.
\label{thm:class_qc}
\end{theorem}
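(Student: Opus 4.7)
The overall plan is to exploit the structure of quasi-concave functions as ``stacks'' of convex body level sets and reduce everything to Hadwiger's theorem on $\cK^n$. The if-direction is easy: for every $t > 0$,
\[
\{f \vee g \geq t\} = \{f \geq t\} \cup \{g \geq t\} \quad\text{and}\quad \{f \wedge g \geq t\} = \{f \geq t\} \cap \{g \geq t\}
\]
are convex bodies whenever $f, g, f \vee g, f \wedge g$ are quasi-concave, so the valuation identity for each intrinsic volume $V_i$ transfers pointwise in $t$ to $f \mapsto V_i(\{f \geq t\})$, and integrating against $\nu_i$ preserves it. Monotonicity and rigid motion invariance follow from the corresponding properties of $V_i$, while continuity is the very condition that dictates the integrability class $\cN_i$ (via monotone or dominated convergence applied to $t \mapsto V_i(\{f \geq t\})$).

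For the only-if direction, I would first restrict $\oZ$ to the simplest quasi-concave test functions $t \chi_K$ with $t \geq 0$ and $K \in \cK^n$. Since $t \chi_K \vee t \chi_L = t \chi_{K \cup L}$ and $t \chi_K \wedge t \chi_L = t \chi_{K \cap L}$, the map $\oZ_t(K) := \oZ(t \chi_K)$ is a continuous, rigid motion invariant, increasing valuation on $\cK^n$ for each fixed $t$. Hadwiger's Theorem (Theorem~5.1, in its monotone form) then yields coefficients $c_0(t), \ldots, c_n(t) \geq 0$ with
\[
\oZ(t \chi_K) = \sum_{i=0}^n c_i(t) V_i(K)
\]
for every $K \in \cK^n$. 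Choosing test bodies (e.g.\ balls of varying radius) to isolate each coefficient, together with the monotonicity of $\oZ$ in its function argument, shows that each $c_i$ is non-negative and non-decreasing on $[0, \infty)$, and so gives rise to a Borel measure $\nu_i$ via $\nu_i([0, t]) := c_i(t) - c_i(0)$, with the residual constants absorbed into $c_0(0)$.

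The combinatorial core is to verify the formula on the dense class of \emph{simple} quasi-concave functions, that is, $f = \max_{i=1}^m a_i \chi_{K_i}$ with $a_1 > \dots > a_m > 0$ and $K_1 \subseteq \dots \subseteq K_m$. A direct computation shows $(a_i \chi_{K_i}) \wedge (a_j \chi_{K_j}) = a_{\max(i,j)} \chi_{K_{\min(i,j)}}$, so the inclusion--exclusion principle for valuations on the lattice generated by the $a_i \chi_{K_i}$ collapses, via the identity $\sum_k (-1)^k \binom{r}{k} = 0$ for $r \geq 1$, to the telescoping expression
\[
\oZ(f) = \sum_{i=1}^m \big( \oZ(a_i \chi_{K_i}) - \oZ(a_{i+1} \chi_{K_i}) \big),
\]
with the convention $a_{m+1} := 0$. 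Substituting the Hadwiger representation of the previous step and using $\{f \geq t\} = K_j$ on $(a_{j+1}, a_j]$, the right-hand side rewrites exactly as $\sum_{i=0}^n \int_{[0, \infty)} V_i(\{f \geq t\}) \,\D \nu_i(t)$.

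The final step would be an approximation argument: every quasi-concave $f$ is the pointwise increasing limit of simple quasi-concave functions whose level sets Hausdorff-converge to those of $f$ for almost every $t > 0$, and the formula extends by the continuity of $\oZ$ together with the Hausdorff continuity of the $V_i$ (plus dominated convergence on the right). I expect the main obstacle to lie precisely here: one has to pin down a topology on quasi-concave functions in which the assumed continuity of $\oZ$ and the a.e.\ Hausdorff convergence of level sets are simultaneously compatible, and control the integrability encoded in the classes $\cN_i$ sharply enough to interchange limit and integral uniformly over the approximating sequence.
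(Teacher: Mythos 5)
The paper does not prove Theorem~\ref{thm:class_qc}: it is quoted from \cite{Colesanti-Lombardi}, and even the precise definitions of the measure classes $\cN_i$ and of the topology on quasi-concave functions are delegated to that reference. So there is no ``paper proof'' to compare against line by line. That said, the strategy you outline --- test on multiples of indicators $t\chi_K$, apply the monotone Hadwiger theorem fibrewise to get coefficients $c_i(t)$, read off non-decreasing distribution functions, verify the formula on finite ``stacks'' by a telescoping identity, and then pass to the limit --- is exactly the natural route and is the one used in \cite{Colesanti-Lombardi}, so your overall plan is sound.

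A few points in your sketch are not quite tight, and you should at least be aware of them. First, deducing that \emph{each} $c_i$ is non-decreasing from $\oZ_t(K)\le\oZ_s(K)$ does not follow by testing only on balls: a pointwise inequality of two polynomials in the radius does not yield coefficientwise inequalities. You have to exploit homogeneity more carefully, e.g.\ restrict to $j$-dimensional bodies and look at the leading term after scaling $K\mapsto rK$, working upward in $j$. Second, your telescoping identity with $a_{m+1}:=0$ only matches the inclusion--exclusion expansion if $\oZ(0)=0$; this is where the classes $\cN_i$ (no mass at $t=0$ for $i\ge1$, and appropriate behaviour at $0$ for $i=0$) enter and why the integral is taken on $[0,\infty)$ with level sets that are only guaranteed to be convex bodies for $t>0$. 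Finally, to produce the measure $\nu_i$ from $c_i$ you additionally need right-continuity of $c_i$, and the dominated-convergence step in the final approximation needs a uniform domination of $t\mapsto V_i(\{f_k\ge t\})$, both of which come precisely from the continuity assumption in the topology you correctly flag as the unresolved technical core. With those points handled, your argument would be complete and is essentially the argument of \cite{Colesanti-Lombardi}.
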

Here, $\oZ$ is \emph{rigid motion invariant} if
$$
\oZ(f\circ \phi^{-1})=\oZ(f)
$$
for every quasi-concave $f:\R^n\to\R$ and every rigid motion $\phi: \R^n\to\R^n$. It is \emph{increasing} if $f\leq g$ pointwise implies $\oZ(f)\leq \oZ(g)$. For the precise definition of the classes $\cN_i$ of Radon measures on $[0,\infty)$ and for the topology used in Theorem~\ref{thm:class_qc}, we refer to \cite{Colesanti-Lombardi}.

A homogeneous decomposition theorem for valuations on quasi-concave functions that corresponds to Theorem~\ref{thm_hom_decomp} and a functional analog of Theorem~\ref{thm_hugo_n} were proved in \cite{Colesanti-Lombardi-Parapatits}. In the next section, we will discuss valuations on convex functions defined via superlevel sets.

\subsubsection{Convex Functions}
Recently, the first results on valuations on various spaces of convex functions were obtained.
Valuations on the space of coercive convex functions, $\fconv$, (see Section~\ref{subse:coercive_cvx_fcts} for the definition) were first classified in \cite{Cavallina:Colesanti}. A characterization of analogs of the Euler characteristic and the $n$-dimensional volume as $\sln$ invariant valuations was established in \cite{Colesanti-Ludwig-Mussnig-1}, and we will prove a special case of this result in the next section. See also \cite{Mussnig19,Mussnig21}.

In addition, Minkowski valuations were considered, and characterizations of functional analogs of the difference body and the projection body were obtained in \cite{Colesanti-Ludwig-Mussnig-2}. The former result is the following.
\begin{theorem}
A map $\oZ:\fconv\to\cK^n$ is a continuous, decreasing, translation invariant and $\sln$ covariant Minkowski valuation if and only there exists a continuous, decreasing $\zeta:\R\to[0,\infty)$ with $\int_0^\infty \zeta(t) \,\D t <+\infty$ such that
$$\oZ(u)= D [\zeta\circ u]$$
for every $u\in \fconv$.
\end{theorem}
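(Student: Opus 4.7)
For the sufficiency direction, the plan is to make the operator $D[\zeta\circ u]$ precise as a Minkowski integral over sublevel sets,
\[
D[\zeta\circ u]=\int_0^{\infty} D\{u\le t\}\,\D(-\zeta)(t),
\]
which converges in the Hausdorff metric because $\{u\le t\}$ is a convex body (by coercivity) whose diameter grows at most polynomially with $t$, while $\zeta$ is integrable. The five required properties then follow mechanically: Minkowski additivity of $D$ on $\cK^n$ together with $\{u\vee v\le t\}=\{u\le t\}\cap\{v\le t\}$ and $\{u\wedge v\le t\}=\{u\le t\}\cup\{v\le t\}$ yields the Minkowski-valuation identity after integrating in $t$; translation invariance and $\sln$ covariance come from the same properties of $D$ on $\cK^n$ combined with the behaviour of sublevel sets; the decreasing property is immediate from the inclusion $\{u\le t\}\subset\{v\le t\}$ when $u\ge v$; and continuity under epi-convergence is a dominated convergence argument using the integrability of $\zeta$.

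For the necessity direction, I would follow the standard pattern for classifying $\sln$-covariant Minkowski valuations on function spaces. The key idea is to restrict $\oZ$ to a rich family of test functions and use classification theorems on $\cK^n$ to extract $\zeta$. Concretely, for $K\in\cK^n$ and $t\in\R$ let $\ell_K^t\in\fconv$ be the function equal to $t$ on $K$ and $+\infty$ off $K$. Then for each fixed $t$, the map $K\mapsto\oZ(\ell_K^t)$ is a continuous, decreasing, translation invariant, and $\sln$ covariant Minkowski valuation on $\cK^n$; by the Ludwig--Haberl classification of such valuations, it must be a non-negative scalar multiple of the difference body. That scalar is by definition $\zeta(t)$, and monotonicity, continuity and integrability of $\zeta$ are forced by the corresponding properties of $\oZ$ applied to one-parameter families of such $\ell_K^t$.

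The second step is to extend the formula from these test functions to all of $\fconv$. Given $u\in\fconv$, choose levels $t_0<t_1<\cdots<t_N$ and form the step approximant
\[
u_N(x)=\min\bigl\{t_k:x\in\{u\le t_k\}\bigr\},
\]
which can be written as a pointwise minimum of the functions $\ell_{\{u\le t_k\}}^{t_k}$. Iterated application of the Minkowski valuation identity, in the spirit of the inclusion--exclusion principle \eqref{eq_in_ex}, together with the formula from the previous step, yields
\[
\oZ(u_N)=\sum_{k}\bigl(\zeta(t_{k-1})-\zeta(t_k)\bigr)\,D\{u\le t_k\},
\]
which is a Riemann--Stieltjes approximation of $D[\zeta\circ u]$. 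Passing to the limit via epi-convergence $u_N\to u$, the continuity of $\oZ$, and the continuity of $D$ in the Hausdorff metric finishes the identification.

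The principal obstacle is the passage from the test functions $\ell_K^t$ to the whole space $\fconv$: one must show that $u_N$ epi-converges to $u$, that the Minkowski sums on the right-hand side converge in the Hausdorff metric to the asserted integral, and that $\oZ$ is continuous along this approximation. All three points hinge on the integrability condition $\int_0^\infty\zeta(t)\,\D t<\infty$, which controls the tails of the Minkowski integral and prevents the approximating bodies from ``escaping to infinity.'' A secondary subtlety is justifying the use of the $\sln$-covariant Minkowski valuation classification in the intermediate step: one has to verify that $K\mapsto\oZ(\ell_K^t)$ is Hausdorff-continuous, which reduces to showing that $K\mapsto\ell_K^t$ is epi-continuous, a routine but essential check.
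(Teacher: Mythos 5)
The paper states this result with a citation to its reference list but does not supply a proof, so there is no argument of the paper to compare against line by line; I am assessing your attempt on its own terms.

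Your sufficiency direction is essentially correct in spirit. After a change of variables, $[\zeta\circ u]$ is indeed the Minkowski integral $\int_{\min u}^{\infty}\{u\le t\}\,\D(-\zeta)(t)$ (note that the lower limit should be $\min_{x\in\R^n} u(x)$ rather than $0$), and the listed properties follow by the mechanism you describe; the convergence of the integral uses the linear growth of the sublevel sets (a coercive $u$ satisfies $u(x)\ge a|x|+b$) together with $\int_0^{\infty}\zeta(t)\,\D t<\infty$, and the valuation identity uses $K+L=(K\cup L)+(K\cap L)$ for $K\cup L\in\cK^n$.

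The necessity direction contains a genuine gap at exactly the point you flag as the principal obstacle. Your step approximant $u_N=\bigwedge_k \ell_{\{u\le t_k\}}^{t_k}$ is \emph{not} a convex function: for $u(x)=|x|$ on $\R$ with levels $0<1<2$ it takes the values $0,1,2$ on the nested sets $\{0\},[-1,1],[-2,2]$ and has jump discontinuities, so $u_N\notin\fconv$ and $\oZ(u_N)$ is not defined. More fundamentally, if $K_1\subsetneq K_2$ and $t_1<t_2$, then $\ell_{K_1}^{t_1}\wedge\ell_{K_2}^{t_2}$ is a two-step function outside $\fconv$, so the valuation identity of Definition~\ref{def:val_fun_space} cannot be applied to these pairs, and the inclusion--exclusion expansion you invoke to produce the Riemann--Stieltjes sum $\sum_k\bigl(\zeta(t_{k-1})-\zeta(t_k)\bigr)D\{u\le t_k\}$ is not available. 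Your extraction of $\zeta$ from the test functions $\ell_K^t$ is sound (each $K\mapsto\oZ(\ell_K^t)$ is a continuous, translation invariant, $\sln$ covariant Minkowski valuation on $\cK^n$, hence a non-negative multiple $\zeta(t)$ of $DK$), but the passage from these test functions to all of $\fconv$ has to run through approximants that stay inside $\fconv$. The template the paper uses for the analogous scalar result (Theorem~\ref{thm:functional_blaschke}) is to work with translated cone functions $g_P+t$ via Lemma~\ref{le:oz_on_gauge}, classify the induced $\sln$ covariant valuation on $\cPo^n$, and then invoke a reduction statement in the style of Lemma~\ref{le:reduction_1} showing that a continuous, translation invariant valuation on $\fconv$ is determined by its values on $g_P+t$; the inductive step there only ever forms $u\wedge\bar u$ and $u\vee\bar u$ which both remain piecewise affine convex, precisely avoiding the step-function obstruction. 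Your argument needs a reduction of that kind in place of the step-function approximation.
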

\noindent
Here, $\oZ:\fconv\to\cK^n$ is \emph{decreasing} if $u\leq v$ pointwise implies $\oZ(v)\subseteq \oZ(u)$.  
It is \emph{$\sln$ covariant} if $\oZ(u\circ \phi^{-1})=\phi \oZ(u)$ for every $u\in\fconv$ and $\phi\in\sln$. The body $[\zeta\circ u]\in \cK^n$ is given by its support function for  $y\in \sn$ as
$$h([\zeta\circ u],y):=\int_0^{\infty} h(\{\zeta\circ u \geq t\},y)\,\D t,$$
and $DK:=K+(-K)$ denotes the \emph{difference body} of the convex body $K\in\cK^n$. For the definition of continuity of operators on $\fconv$, we refer to Section~\ref{subse:coercive_cvx_fcts}.

\subsubsection{Functions defined on $\sn$}
There are various results on valuations on spaces of real-valued functions on the unit sphere, $\sn$. Such functions are particularly interesting since they appear as radial functions of star bodies or more general star-shaped sets. Results on valuations on the set, $C(\sn)^+$, of positive, continuous functions on $\sn$ are equivalent to results on valuations on star bodies. The following result was established in \cite{Tradacete_Villanueva_JMAA,Villanueva:AiM} as  a result for valuations on star bodies.

\begin{theorem}
A map $\oZ:C(\sn)^+\to\R$ is a continuous and rotation invariant valuation if and only if there is a continuous function $\zeta:[0,\infty)\to\R$ such that
$$\oZ(f)=\int_{\sn} \zeta(f(x)) \, \D \hm(x)$$
for every $f\in C(\sn)^+$.
\end{theorem}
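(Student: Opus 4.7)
The ``if'' direction is a direct calculation. For continuous $\zeta:[0,\infty)\to\R$, the functional $f\mapsto\int_{\sn}\zeta(f(x))\,\D\hm(x)$ is well-defined on $C(\sn)^+$ since $\zeta\circ f$ is continuous and $\sn$ is compact, it is rotation invariant because $\hm$ is, and it is continuous under uniform convergence because $\zeta$ is uniformly continuous on compact subsets of $[0,\infty)$. The valuation identity follows from the pointwise identity $\zeta(f\vee g)+\zeta(f\wedge g)=\zeta(f)+\zeta(g)$, which holds because at each point $x$ the unordered pair $\{(f\vee g)(x),(f\wedge g)(x)\}$ coincides with $\{f(x),g(x)\}$.

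For the ``only if'' direction, the first step is to extract $\zeta$ from the action of $\oZ$ on constant functions. Writing $a\mathbf{1}$ for the constant function with value $a\ge 0$, rotation invariance forces $\oZ(a\mathbf{1})$ to depend only on $a$, so I set
\[
\zeta(a):=\oZ(a\mathbf{1})/\hm(\sn).
\]
The continuity of $\oZ$ in the sup-norm, together with the continuity of $a\mapsto a\mathbf{1}$ from $[0,\infty)$ into $C(\sn)^+$, gives continuity of $\zeta$.

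The main step is to localize. Fix $a > b\ge 0$, and for a closed spherical cap $C\subset\sn$ and $\delta>0$ let $f_{C,\delta}\in C(\sn)^+$ be any function equal to $a$ on $C$, equal to $b$ outside the $\delta$-neighborhood of $C$, and interpolating continuously in between. The key observation is that if $C_1,C_2\subset\sn$ are disjoint closed caps, then for $\delta$ small the valuation identity applied to $f_{C_1,\delta}$ and $f_{C_2,\delta}$ gives
\[
\oZ(f_{C_1,\delta})+\oZ(f_{C_2,\delta})=\oZ(b\mathbf{1})+\oZ(f_{C_1\cup C_2,\delta})+o(1)\quad\text{as }\delta\to 0,
\]
since $f_{C_1,\delta}\wedge f_{C_2,\delta}$ equals the constant $b$ outside two shrinking transition zones, while $f_{C_1,\delta}\vee f_{C_2,\delta}$ agrees with $f_{C_1\cup C_2,\delta}$ off such a zone. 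Setting $\Phi(C):=\lim_{\delta\to 0}\bigl(\oZ(f_{C,\delta})-\zeta(b)\hm(\sn)\bigr)$, rotation invariance forces $\Phi$ to depend only on $\hm(C)$ and the identity above makes $\Phi$ additive on disjoint caps. Since $\Phi(\sn)=\zeta(a)\hm(\sn)-\zeta(b)\hm(\sn)$ (no transition zone is needed when $C=\sn$), approximating $\sn$ by unions of disjoint small caps and using continuity forces $\Phi(C)=(\zeta(a)-\zeta(b))\hm(C)$, so $\oZ(f_{C,\delta})\to\zeta(a)\hm(C)+\zeta(b)\hm(\sn\setminus C)$. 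Iterating this procedure on a fine covering of $\sn$ by caps on which $f$ has small oscillation, and invoking continuity of both $\oZ$ and $\zeta$ together with uniform continuity of $f$, delivers the integral formula for arbitrary $f\in C(\sn)^+$.

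The main obstacle is that $C(\sn)^+$ contains no step functions, so the valuation identity must always be applied to continuous approximations whose transition zones contribute error terms. The plan therefore hinges on rotation invariance to symmetrize such errors across congruent caps, on the additivity argument for $\Phi$ to reduce the computation of $\oZ$ on a single cap to the (known) case of constants, and on continuity of $\oZ$ to dispose of the transition-zone error in the limit $\delta\to 0$. Making the additivity argument for $\Phi$ robust under the inevitable failure to tile $\sn$ exactly by caps, and controlling the accumulation of $o(1)$ errors through the iteration, is the technical heart of the proof.
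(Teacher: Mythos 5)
The paper itself does not prove this theorem; it is stated and attributed to \cite{Tradacete_Villanueva_JMAA,Villanueva:AiM} without argument, so there is no in-paper proof to compare against and your proposal must be judged on its own. Your ``if'' direction is correct and complete. The ``only if'' direction, however, is an outline with genuine gaps, several of which you flag yourself without resolving.

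The central unresolved problem is the limit defining $\Phi(C)$. Since $f_{C,\delta}$ does \emph{not} converge in $C(\sn)^+$ as $\delta\to 0$ (its pointwise limit is discontinuous), continuity of $\oZ$ gives no information about $\lim_{\delta\to 0}\oZ(f_{C,\delta})$; and since monotonicity of $\oZ$ is not assumed, you have no a priori control on the $\delta$-dependence, nor on the dependence on the chosen interpolant in the transition zone. Even if one grants that $\Phi$ is well-defined, rotation invariant and finitely additive over disjoint caps, this yields only the Cauchy functional equation for $\Phi$ as a function of the cap measure: concluding linearity requires continuity or monotonicity of $\Phi$ in that variable, which again does not follow from continuity of $\oZ$ because $C\mapsto f_{C,\delta}$ is not continuous uniformly in $\delta$. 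Finally, the passage from two-valued test functions to a general $f\in C(\sn)^+$ is precisely the technical core of the cited papers: caps overlap rather than tile $\sn$, so the inclusion--exclusion expansion of the valuation identity over a fine covering produces a rapidly growing family of transition zones, and one must show that their total contribution vanishes as the covering is refined while $\delta\to 0$ simultaneously. Iterating an identity that holds only up to $o(1)$ errors is not a routine $\varepsilon/N$ bound; without an explicit estimate the accumulated error may be unbounded. In short, the plan is a plausible starting point, but the steps you label ``the technical heart'' are exactly where the proof stands or falls, and they are not supplied.
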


\noindent
Here, $\oZ$ is \emph{rotation invariant} if
$$\oZ(f\circ \phi^{-1})=\oZ(f)$$
for every $f\in C(\sn)^+$ and every $\phi\in \son$, and continuity of $\oZ$ is understood with respect to uniform convergence of functions.

Further results in this area include  characterizations of continuous valuations on $C(\sn)^+$ without additional invariance properties and of valuations on the space of Lipschitz functions on $\sn$ (see \cite{Colesanti-Pagnini-Tradacete-Villanueva:AiM, Colesanti-Pagnini-Tradacete-Villanueva:JFA, Tradacete_Villanueva_AiM}).

\subsubsection{$L_p$ spaces}
The Laplace transform was characterized as valuation on subspaces of $L^1(\R^n)$ in \cite{Li-Ma:JFA}.
A classification of Minkowski valuations on $L_p$ spaces for $p\ge 1$ was found in \cite{Ober}. It generalizes previous results by Tsang \cite{Tsang:Mink}.
Classifications of translation and $\sln$ invariant valuations on Sobolev spaces were obtained in \cite{MaSobolev}.

\subsubsection{Definable Functions}
A Hadwiger-type classification of valuations on definable functions was obtained in \cite{Baryshnikov_Ghrist_Wright_AiM_2013}.
Here, diverse and, in general, quite large spaces of bounded real-valued functions on $\R^n$ are called definable. 
For the precise definition, which uses so-called $o$-minimal systems, we refer to \cite{Baryshnikov_Ghrist_Wright_AiM_2013}.

The authors of \cite{Baryshnikov_Ghrist_Wright_AiM_2013} use integral geometry and approximations by step functions to find functional extensions of intrinsic volumes to definable functions. While the construction of these functionals is too technical to reproduce here, we mention that functional analogs of volume are of the form
$$f\mapsto \int_{\R^n} \zeta(f(x))\,\D x,$$
where
$\zeta:\R\to\R$ is a suitable function.

In addition, two different topologies on definable functions are introduced in \cite{Baryshnikov_Ghrist_Wright_AiM_2013}, and it is shown that the new functional versions of the intrinsic volumes are continuous. These functionals are then characterized as the only continuous, rigid motion invariant valuations on the function space.

Applications to sensor networks of the functional analogs of the Euler characteristic are described, for example, in \cite{Baryshnikov_Ghrist}).

\section{A First Classification of Valuations on Convex Functions}
One of the first results on valuations on convex functions \cite{Colesanti-Ludwig-Mussnig-1} 
characterizes functional analogs of the Euler characteristic and $n$-dimensional volume. It is a functional version of Theorem~\ref{thm_Blaschke}. 
We will prove a special case of the result.

\subsection{Functional Setting}
\label{subse:coercive_cvx_fcts}
We denote by
$$\fconvx:=\{u:\R^n\to(-\infty,+\infty]\colon u \text{ is l.s.c. and convex}, u \not\equiv +\infty\}$$
the space of extended real-valued, lower semicontinuous, convex, proper functions on $\R^n$. We will be mostly working on the subspace of coercive functions,
$$\fconv:=\left\{u\in\fconvx\colon \lim\nolimits_{|x|\to+\infty} u(x)=+\infty\right\}.$$
Observe that a function $u\in\fconvx$ is coercive if and only if there exist $a>0$ and $b\in\R$ such that
\begin{equation*}
u(x)\geq a |x| + b
\end{equation*}
for every $x\in\R^n$.

If $u\in\fconv$, then its sublevel sets
$$\{u\leq t\}:=\{x\in\R^n\colon u(x)\leq t\}$$
are compact, convex subsets of $\R^n$ for every $t\in\R$. We have $\{u\leq t\}=\varnothing$ for every $t<\min_{x\in\R^n} u(x)$ and $\{u\leq t\}\in \cK^n$ for every $t\geq \min_{x\in\R^n} u(x)$. Note that $u$ attains its minimum since it is lower semicontinuous and coercive. We will see that many operators can be generalized to $\fconv$ 
using sublevel sets.

It is easy to see that
\begin{equation}
\label{eq:lvl_sets_min_max}
\{u\vee v \leq t\}= \{u\leq t\} \cap \{v\leq t\}\quad \text{and}\quad  \{u\wedge v \leq t\}=\{u\leq t\} \cup \{v\leq t\}
\end{equation}
for every $u,v\in\fconv$ such that $u\wedge v\in\fconv$ and $t\in\R$. 
Also, observe that while the domain of $u$,
$$\dom u:=\{x\in\R^n\colon u(x)< +\infty\},$$
is convex, it need not be bounded or closed.

We equip $\fconvx$ and its subspaces with the topology associated to epi-convergence, also called $\Gamma$-convergence. For standard references on this topic, we refer to the books \cite{braides, dal_maso, RockafellarWets}.

On $\fconv$, we have the following simple description of epi-convergent sequences. Convergence of compact, convex sets is with respect to the Hausdorff metric, and a sequence of sets $K_j$ is convergent to the empty set if $K_j=\varnothing$ for every $j\geq j_0$ with some $j_0\in\N$.

\begin{definition}
\label{def:epi-conv_lvl_sets}
A sequence $u_k \in \fconv$ is epi-convergent to $u\in\fconv$ if  
$\{u_k\leq t\}$ converges to $\{u\leq t\}$ for every $t\not=\min_{x\in\R^n} u(x)$. In this case, we will simply write $u_k\to u$.
\end{definition}

\noindent
Note that in general $\{u_k\leq t_0\}$ does not converge to $\{u\leq t_0\}$ for $t_0=\min_{x\in\R^n} u(x)$. To see this, let $u\in\fconv$ be arbitrary and set $u_k(x):=u(x)+\frac 1k$ for $x\in\R^n$ and $k\in\N$.
It is easy to see that $u_k$ is epi-convergent to $u$, but 
$$\{u_k\leq t_0\}=\varnothing$$
for every $k\in\N$ whereas $\{u\leq t_0\}\not=\varnothing$.

While pointwise convergence is a good choice on many function spaces, it gives undesired results on $\fconv$. To see this, let
$$\ind_K(x):=\begin{cases}
0\,\quad &\text{for }x\in K\\
+\infty\quad &\text{for }x\not\in K,
\end{cases}$$
denote the (convex) indicator function of $K\in\cK^n$ and $B^n:=\{x\in\R^n\colon |x|\leq 1\}$ the unit ball in $\R^n$. The sequence of functions
$$u_k(x):=\ind_{\left(1-\frac 1k\right) B^n}(x)$$
for $x\in\R^n$ and $k\in\N$, is not converging pointwise to $\ind_{B^n}$ on $\{x\in\R^n\colon |x|=1\}$, whereas it is epi-convergent. Furthermore, epi-convergence implies pointwise convergence almost everywhere (see, for example, \cite[Theorem 7.17]{RockafellarWets}).

A simple consequence of Definition~\ref{def:epi-conv_lvl_sets} is the following result \cite[Lemma 8]{Colesanti-Ludwig-Mussnig-1}.

\begin{lemma}
For any sequence $u_k\in \fconv$ that is epi-convergent to some $u\in\fconv$, there exist $a>0$ and $b\in\R$ such that
\begin{equation}
\label{eq:uniform_cone}
u_k(x)\geq a|x|+b\quad \text{and} \quad u(x)\geq a|x|+b
\end{equation}
for every $x\in\R^n$ and $k\in\N$.\label{le:uniform_cone}
\end{lemma}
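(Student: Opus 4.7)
My plan is to combine the level-set characterization of epi-convergence with a convexity argument. I will first use three well-chosen sublevel sets of $u$ to obtain both a uniform lower bound on the infima $\min u_k$ and uniform containment of two sublevel sets of $u_k$ in concentric balls; a convexity estimate will then upgrade these into a single affine cone bound valid for every $k$ beyond some index $k_0$, and the finitely many remaining $u_k$ will be absorbed using their individual coercivity.

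I would set $m:=\min u$ and fix $s<m<t_1<t_2$, for instance $s:=m-1$ and $t_i:=m+i$. Since $u\in\fconv$, the sublevel sets $K_i:=\{u\leq t_i\}$ lie in $\cK^n$, while $\{u\leq s\}=\varnothing$. None of $s, t_1, t_2$ equals $m$, so Definition~\ref{def:epi-conv_lvl_sets} gives $\{u_k\leq s\}\to\varnothing$ and $\{u_k\leq t_i\}\to K_i$ in the Hausdorff metric. I would then choose radii $r_1<r_2$ with $K_i\subseteq r_i B^n$ and with enough room in the first inclusion so that, for $k$ large, $\{u_k\leq t_1\}$ still lies strictly inside $r_1 B^n$. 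Hausdorff convergence then provides $k_0\in\N$ such that
\[
\min u_k>s,\qquad \varnothing\neq\{u_k\leq t_1\}\subseteq r_1 B^n,\qquad \{u_k\leq t_2\}\subseteq r_2 B^n
\]
for every $k\geq k_0$, with $|x|<r_1$ whenever $x\in\{u_k\leq t_1\}$.

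For such $k$, I would pick $x_k\in\{u_k\leq t_1\}$. For $y\in\R^n$ with $|y-x_k|>r_1+r_2$, set $\lambda:=(r_1+r_2)/|y-x_k|\in(0,1)$ and $y^*:=(1-\lambda)x_k+\lambda y$. Then the reverse triangle inequality gives $|y^*|\geq (r_1+r_2)-|x_k|>r_2$, so $u_k(y^*)>t_2$, and convexity of $u_k$ yields
\[
t_2<u_k(y^*)\leq (1-\lambda)u_k(x_k)+\lambda u_k(y)\leq (1-\lambda)t_1+\lambda u_k(y),
\]
which rearranges to $u_k(y)\geq t_1+a_0|y-x_k|$ with $a_0:=(t_2-t_1)/(r_1+r_2)>0$. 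For the remaining range $|y-x_k|\leq r_1+r_2$, the bound $u_k(y)\geq s$ applies. Combining the two ranges via $|y-x_k|\geq |y|-r_1$ and $|y|\leq 2r_1+r_2$, respectively, collapses them into a single affine estimate
\[
u_k(y)\geq a_0|y|+\tilde b\qquad\text{for all } y\in\R^n,
\]
uniform in $k\geq k_0$, with $\tilde b$ depending only on $m, t_1, r_1, r_2$. The identical argument, with $\min u\geq m>s$, yields $u(y)\geq a_0|y|+\tilde b$.

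Each of the finitely many $u_1,\ldots,u_{k_0-1}$ is coercive and therefore admits an individual cone bound $u_k(y)\geq a_k|y|+b_k$ with $a_k>0$. Taking $a:=\min(a_0,a_1,\ldots,a_{k_0-1})>0$ and $b:=\min(\tilde b,b_1,\ldots,b_{k_0-1})$ will then produce a single pair that works for $u$ and every $u_k$ simultaneously. The crux of the argument is the uniform lower bound on $\min u_k$; the level-set definition of epi-convergence applied at a level $s<m$ (legitimate since $s\neq m$) furnishes this at once, as the Hausdorff convergence $\{u_k\leq s\}\to\{u\leq s\}=\varnothing$ forces $\{u_k\leq s\}$ to be empty for $k$ large.
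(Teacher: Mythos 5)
The paper states this lemma without proof (it is cited from an earlier paper in the series), so there is no in-text proof to compare against; on its own merits, your argument is correct. You combine the level-set characterization of epi-convergence (Definition~\ref{def:epi-conv_lvl_sets}) applied at one level $s<\min u$ and two levels $t_2>t_1>\min u$ to extract, for all large $k$, a uniform lower bound on $\min u_k$ and uniform ball-containments of two sublevel sets, and a standard two-point convexity estimate then produces a cone bound with slope $(t_2-t_1)/(r_1+r_2)$, after which the finitely many initial terms are absorbed using the individual coercivity of each $u_k\in\fconv$. This is exactly the natural route given the paper's definitions, and every step (the eventual non-emptiness of $\{u_k\le t_1\}$, the strictness needed so that $|y^*|>r_2$, the merging of the two ranges into one affine bound, and the final taking of minima over finitely many $(a_k,b_k)$ and $(a_0,\tilde b)$) checks out.
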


\subsection{Valuations on $\fconv$}
We define valuations on $\fconvx$ and its subspaces as in Definition~\ref{def:val_fun_space}, using, in addition, that
$$t\vee +\infty = +\infty \quad \text{and} \quad t\wedge +\infty = t$$
for any $t\in(-\infty,+\infty]$.

\goodbreak
Let $\oZ:\fconv\to\R$ be a valuation. We say that $\oZ$ is \emph{translation invariant} if 
$$\oZ(u\circ \tau^{-1})=\oZ(u)$$
for every $u\in\fconv$ and translation $\tau$ on $\R^n$. It is \emph{$\SL(n)$ invariant} if
$$\oZ(u\circ \vartheta^{-1})=\oZ(u)$$
for every $u\in\fconv$ and $\vartheta\in\SL(n)$. Continuity of $\oZ$ is understood with respect to epi-convergence.

Note that we use the inverse of transforms on $\R^n$ in the definitions above. This corresponds to applying the transforms to sublevel sets, that is, 
\begin{equation}
\label{eq:transform_lvl_sets}
\{u\circ \tau^{-1}\leq t\}=\tau \{u\leq t\} \quad \text{and} \quad \{u\circ \vartheta^{-1}\leq t\}=\vartheta \{u\leq t\}
\end{equation}
for every $u\in\fconv$, $t\in\R$, translation $\tau$ on $\R^n$ and $\vartheta\in\SL(n)$.
\goodbreak

In the following, we describe functional analogs of the Euler characteristic and $n$-dimensional volume. We set $\E^{-u(x)}=0$ if $u\in\fconv$ and $x\in\R^n$ are such that $u(x)=+\infty$.

\begin{lemma}
\label{le:affine_inv_vals}
The maps
\begin{equation}
\label{eq:exp_min_u}
u\mapsto \E^{-\min_{x\in\R^n} u(x)}
\end{equation}
and
\begin{equation}
\label{eq:int_exp_u}
u\mapsto \int_{\R^n} \E^{-u(x)}\,\D x
\end{equation}
define continuous, $\SL(n)$ and translation invariant valuations on $\fconv$.
\end{lemma}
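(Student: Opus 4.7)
The plan is to check the four properties—valuation, $\SL(n)$ invariance, translation invariance, and continuity—for each functional, exploiting the sublevel-set identities \eqref{eq:lvl_sets_min_max} and the transformation rules \eqref{eq:transform_lvl_sets}. Invariance is immediate: translations and maps in $\SL(n)$ are bijections of $\R^n$, so they preserve $\min_{x\in\R^n} u(x)$; for the integral, unit Jacobian gives $\int_{\R^n} \E^{-u\circ\phi^{-1}(x)}\,\D x=\int_{\R^n} \E^{-u(x)}\,\D x$. The remaining tasks are the valuation identity and continuity for each map.

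For the minimum functional, write $a=\min u$, $b=\min v$, $c=\min(u\vee v)$, $d=\min(u\wedge v)$. Since $(u\wedge v)(x)=\min(u(x),v(x))$, one checks directly that $d=\min(a,b)$. The nontrivial identity is $c=\max(a,b)$. Assume without loss of generality that $a\le b$; the inequality $c\ge b$ follows from $u\vee v\ge v$. For the reverse, fix $t\ge b$: then $\{u\le t\}$ and $\{v\le t\}$ are nonempty closed convex sets (nonempty since $u$ and $v$ attain their minima thanks to coercivity and lower semicontinuity), and by \eqref{eq:lvl_sets_min_max} and the hypothesis $u\wedge v\in\fconv$, their union is convex. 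Two disjoint nonempty closed convex sets cannot have convex union, since a segment joining a point in one to a point in the other would then split into two nonempty disjoint closed subsets, contradicting its connectedness. Hence $\{u\le t\}\cap\{v\le t\}\ne\varnothing$, giving $x$ with $(u\vee v)(x)\le t$ and thus $c\le t$ for every $t\ge b$. This yields $c\le b=\max(a,b)$, so $\{a,b\}=\{c,d\}$ as multisets and $\E^{-a}+\E^{-b}=\E^{-c}+\E^{-d}$ follows at once. I expect this geometric step to be the main conceptual obstacle, since it is the only place where the assumption $u\wedge v\in\fconv$ is genuinely used.

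For the integral functional, I would apply the layer-cake formula
\[
\int_{\R^n}\E^{-u(x)}\,\D x=\int_\R \E^{-t}\,V_n(\{u\le t\})\,\D t
\]
to each of $u,v,u\vee v,u\wedge v$, and then use the additivity $V_n(A\cup B)+V_n(A\cap B)=V_n(A)+V_n(B)$ of Lebesgue measure together with \eqref{eq:lvl_sets_min_max} pointwise in $t$; integrating against $\E^{-t}\,\D t$ then yields the valuation identity.

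For continuity, let $u_k\to u$ in $\fconv$. Using Definition~\ref{def:epi-conv_lvl_sets}, for any $t>\min u$ the sublevel set $\{u\le t\}$ is a nonempty convex body and $\{u_k\le t\}$ converges to it in Hausdorff distance, so $\min u_k\le t$ eventually; for any $t<\min u$ the set $\{u\le t\}$ is empty and hence $\{u_k\le t\}$ is eventually empty, so $\min u_k>t$ eventually. Together this gives $\min u_k\to\min u$ and therefore continuity of the first functional. For the second, Hausdorff-continuity of $V_n$ yields $V_n(\{u_k\le t\})\to V_n(\{u\le t\})$ for every $t\ne\min u$, and Lemma~\ref{le:uniform_cone} supplies $a>0$ and $b\in\R$ with $u_k(x)\ge a|x|+b$ for all $k$ and $x$. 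Consequently $\{u_k\le t\}\subseteq\{x\in\R^n:|x|\le (t-b)/a\}$ for $t\ge b$ and $\{u_k\le t\}=\varnothing$ for $t<b$, so $\E^{-t}V_n(\{u_k\le t\})$ is dominated by the integrable function $t\mapsto\E^{-t}\kappa_n\big((t-b)/a\big)^n$ on $[b,\infty)$. Dominated convergence then concludes the argument.
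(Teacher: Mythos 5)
Your proof is correct, and it agrees with the paper's treatment for the integral functional \eqref{eq:int_exp_u}: both use the layer-cake rewrite $\int_{\R^n}\E^{-u}\,\D x=\int_\R\E^{-t}V_n(\{u\le t\})\,\D t$, the sublevel-set identities \eqref{eq:lvl_sets_min_max}, the coercivity bound from Lemma~\ref{le:uniform_cone}, and dominated convergence. The noteworthy deviation is your treatment of \eqref{eq:exp_min_u}. The paper handles it by the same layer-cake device, writing $\E^{-\min u}=\int_\R\E^{-t}V_0(\{u\le t\})\,\D t$ (with $V_0(\varnothing)=0$) and then invoking "the same arguments as above" so that the Euler characteristic plays the role that $V_n$ played. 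You instead give a direct combinatorial argument: with $a=\min u$, $b=\min v$, $c=\min(u\vee v)$, $d=\min(u\wedge v)$ you show $d=\min(a,b)$ and $c=\max(a,b)$, and the nontrivial inequality $c\le\max(a,b)$ is extracted from the connectedness fact that two disjoint nonempty closed convex sets cannot have convex union — which is exactly where the hypothesis $u\wedge v\in\fconv$ enters. In effect the connectedness step is the concrete content of "$V_0$ is a valuation on $\cK^n\cup\{\varnothing\}$", so the two proofs encode the same geometric fact; the paper's route is more uniform (one machinery covers both functionals), while yours makes the role of $V_0$ explicit and avoids the extra integral transform for the first functional. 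Your direct proof of $\min u_k\to\min u$ from Definition~\ref{def:epi-conv_lvl_sets} is likewise a sound replacement for the paper's appeal to dominated convergence with $V_0$.
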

\begin{proof}
We will first consider \eqref{eq:int_exp_u} and show that it is well-defined and finite. Observe that by Cavalieri's principle (or the layer cake principle combined with the Fubini--Tonelli theorem), we have
\begin{equation}
\label{eq:rewrite_level_sets}
\int_{\R^n} \E^{-u(x)}\,\D x = \int_0^{+\infty} V_n(\{\E^{-u}\geq s\})\,\D s = \int_{-\infty}^{+\infty} V_n(\{u\leq t\})\, \E^{-t}\,\D t
\end{equation}
for every $u\in\fconv$.

Consider first the case that $u\in\fconv$ is such that $u(x)\geq a|x|$ for $x\in\R^n$ with $a>0$. Note that this implies
$$\{u\leq t\}\subseteq \{x\colon a|x|\leq t\} = \tfrac{t}{a}B^n$$
for every $t\geq 0$. Combined with \eqref{eq:rewrite_level_sets}, this implies that 
\begin{align*}
0 \leq \int_{\R^n} \E^{-u(x)}\,\D x &= \int_{-\infty}^{+\infty}V_n(\{u\leq t\}) \E^{-t}\,\D t\\
&\leq \int_0^{+\infty} V_n\big(\tfrac{t}{a}B^n\big)\E^{-t}\,\D t\\
&= \frac{\kappa_n}{a^n} \int_0^{+\infty} t^{n}\E^{-t}\,\D t\\
&= \frac{\kappa_n n!}{a^n}.
\end{align*}
In the general case there exist $a>0$ and $b\in\R$ such that $u(x)\geq a|x|+b$ for $x\in\R^n$ and thus, since $u(x)-b\geq a|x|$ for $x\in\R^n$, we have
\begin{equation}
\label{eq:int_exp_u_est}
0\leq \int_{\R^n} \E^{-u(x)}\,\D x = \E^{-b} \int_{\R^n} \E^{-(u(x)-b)} \,\D x \leq \kappa_n \E^{-b} \frac{n!}{a^n}.
\end{equation}
The fact that \eqref{eq:int_exp_u} defines a valuation follows from \eqref{eq:rewrite_level_sets} and \eqref{eq:lvl_sets_min_max} combined with the fact that the $n$-dimensional volume, $V_n$, is a valuation on convex bodies. Similarly, using \eqref{eq:transform_lvl_sets}, it is easy to obtain $\SL(n)$ and translation invariance.
The valuation property can be proved analogous to \eqref{eq:int_is_a_val}.

It remains to show continuity. Let $u_k$ be a sequence in $\fconv$ that epi-converges to a function $u\in\fconv$. By Lemma~\ref{le:uniform_cone}, there exist $a>0$ and $b\in\R$ such that \eqref{eq:uniform_cone} holds.
Thus, similar to \eqref{eq:int_exp_u_est}, we obtain
$$0\leq \int_{\R^n} \E^{-u_k(x)}\,\D x \leq  \kappa_n \E^{-b} \frac{n!}{a^n}$$
for every $k\in\N$. Since the $n$-dimensional volume is continuous with respect to Hausdorff convergence, we have $V_n(\{u_k\leq t\})\to V_n(\{u\leq t\})$ as $k\to\infty$ for almost every $t\in\R$. 
Therefore, by the dominated convergence theorem, we obtain that
$$\lim_{k\to\infty} \int_{\R^n} \E^{-u_k(x)}\,\D x = \int_{-\infty}^{+\infty} V_n(\{u\leq t\})\, \E^{-t}\,\D t = \int_{\R^n} \E^{-u(x)}\,\D x,$$
and thus \eqref{eq:int_exp_u} is continuous.

In order to establish the properties of \eqref{eq:exp_min_u}, observe that
$$\E^{-\min_{x\in\R^n} u(x)}=\int_{\min_{x\in\R^n} u(x)}^{+\infty}\E^{-t}\,\D t = \int_{-\infty}^{+\infty} V_0(\{u\leq t\})\,\E^{-t} \,\D t$$
for every $u\in\fconv$. Thus, it is easy to see that the same arguments as above can be applied.
\end{proof}

The proof above demonstrates the simple strategy for finding a functional analog of an operator $\oZ:\cK^n\to\R$ by considering the map
$$u\mapsto \int_{-\infty}^{+\infty} \oZ(\{u\leq t\})\, \E^{-t} \,\D t$$
on $\fconv$, where we set $\oZ(\varnothing):=0$. Indeed, in many cases, this will define an operator on $\fconv$ with similar properties as the original operator $\oZ$. More generally, one can often replace $\E^{-t}\,\D t$ by a suitable measure $\D \mu(t)$. For some examples, see \cite{BobkovColesantiFragala,milman_rotem}.

\subsection{A Functional Analog of Blaschke's Result}
Similar to Blaschke's characterization of the Euler characteristic and the volume, Theorem~\ref{thm_Blaschke}, we can characterize their functional analogs, which we introduced in the last section.

We will prove the following result.
\begin{theorem}
\label{thm:functional_blaschke}
For $n\geq 2$, a map $\oZ:\fconv\to\R$ is a continuous, $\SL(n)$ and translation invariant valuation such that
\begin{equation}
\label{eq:vertical_condition}
\oZ(u+t)=\E^{-t}\oZ(u)
\end{equation}
for every $u\in\fconv$ and $t\in\R$, if and only if there are constants $c_0,c_n\in\R$ such that
\begin{equation}
\label{eq:functional_blaschke}
\oZ(u)=c_0\, \E^{-\min_{x\in\R^n} u(x)} + c_n \int_{\R^n} \E^{-u(x)} \,\D x
\end{equation}
for every $u\in\fconv$.
\end{theorem}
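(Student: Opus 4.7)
The plan is to reduce the characterization to Blaschke's theorem on convex bodies (Theorem~\ref{thm_Blaschke}), by identifying a classical valuation on $\cK^n$ hidden inside $\oZ$ and then using the vertical-translation law \eqref{eq:vertical_condition} to propagate the resulting formula back to all of $\fconv$.

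For the ``if'' direction, Lemma~\ref{le:affine_inv_vals} already establishes that both $u\mapsto\E^{-\min u}$ and $u\mapsto\int_{\R^n}\E^{-u(x)}\D x$ are continuous, $\SL(n)$ and translation invariant valuations on $\fconv$; only the vertical-translation law \eqref{eq:vertical_condition} still has to be checked, and this is immediate from $\min(u+t)=\min u+t$ together with the factorization $\int_{\R^n}\E^{-(u(x)+t)}\D x=\E^{-t}\int_{\R^n}\E^{-u(x)}\D x$.

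For the converse, I would first restrict $\oZ$ to the class of convex indicator functions. Setting $\bar\oZ(K):=\oZ(\ind_K)$ for $K\in\cK^n$, the identities $\ind_K\vee\ind_L=\ind_{K\cap L}$ and $\ind_K\wedge\ind_L=\ind_{K\cup L}$ (the latter belonging to $\fconv$ exactly when $K\cup L\in\cK^n$, which is precisely when the valuation identity for $\oZ$ is applicable to this pair) show that $\bar\oZ$ is a valuation on $\cK^n$. It is continuous because Hausdorff convergence $K_k\to K$ implies epi-convergence $\ind_{K_k}\to\ind_K$, and translation and $\SL(n)$ invariance are inherited from the corresponding invariances of $\oZ$. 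Theorem~\ref{thm_Blaschke} then yields constants $c_0,c_n\in\R$ with $\bar\oZ(K)=c_0+c_n V_n(K)$, and \eqref{eq:vertical_condition} upgrades this to
\[
\oZ(\ind_K+s)=\E^{-s}\big(c_0+c_n V_n(K)\big)
\]
for every $K\in\cK^n$ and $s\in\R$. Introducing the auxiliary functional $\otZ(u):=\oZ(u)-c_0\E^{-\min u}-c_n\int_{\R^n}\E^{-u(x)}\D x$, it remains to prove $\otZ\equiv 0$, knowing that $\otZ$ is a continuous, translation and $\SL(n)$ invariant valuation satisfying \eqref{eq:vertical_condition} and vanishing on all $\ind_K+s$.

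The main obstacle will be this last step: extending the vanishing of $\otZ$ from the very narrow family $\{\ind_K+s\}$ to all of $\fconv$. The natural strategy is epi-convergent approximation of an arbitrary $u\in\fconv$ by convex functions on which $\otZ$ can be expressed, through the valuation identity, from its known values; however, for generic $f,g\in\fconv$ the pointwise minimum $f\wedge g$ need not remain convex, so the valuation identity has only limited reach, and in particular the naive piecewise-constant step-function approximation along a chain of nested sublevel sets leaves $\fconv$. I would therefore work with convex piecewise-affine approximations whose effective domain is a convex polytope (which are dense in $\fconv$ in epi-convergence, after a preliminary truncation $u_R:=u+\ind_{RB^n}\to u$ whose behaviour is controlled by the uniform coercivity bound of Lemma~\ref{le:uniform_cone}) and decompose such an approximation along its polytopal sublevel sets. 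The combinatorial task of organizing the decomposition so that all intermediate maxima and minima stay inside $\fconv$ --- so that the valuation identity is genuinely available --- and then using $\SL(n)$ and translation equivariance together with \eqref{eq:vertical_condition} to match each piece to a shifted indicator function is the technical heart of the argument, and the substantially harder counterpart of the elementary simplex-dissection step that appears in the proof of Theorem~\ref{thm_polytopes}. Continuity of $\otZ$ under the approximation then forces $\otZ\equiv 0$, completing the proof.
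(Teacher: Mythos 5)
Your ``if'' direction and the restriction step at the start of the converse are both fine: $K\mapsto\oZ(\ind_K)$ is indeed a continuous, translation and $\sln$ invariant valuation on $\cK^n$ (here translation invariance is immediate, since $\ind_K(\cdot - x_0)=\ind_{K+x_0}$), so Theorem~\ref{thm_Blaschke} gives $\oZ(\ind_K)=c_0+c_nV_n(K)$ and \eqref{eq:vertical_condition} then determines $\oZ$ on all shifted indicator functions $\ind_K+s$.

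The gap is in the reduction step, and it is not just a technical nuisance but a genuinely wrong choice of generating family. The orbit of $\ind_K+s$ under $\sln\ltimes\R^n$ (together with vertical shifts) consists again of functions of the form $\ind_{K'}+s'$: their sublevel sets are empty below height $s'$ and equal to the fixed body $K'$ above it. A piecewise-affine convex function with a sloped piece is not in this orbit, and its sublevel sets vary with $t$; so there is no way to ``match each piece to a shifted indicator function'' as you propose, and the valuation identity cannot bridge this gap either, because pasting indicator functions together via $\vee$ and $\wedge$ only produces more indicator functions. Consequently, knowing that $\otZ$ vanishes on $\{\ind_K+s\}$ does not determine $\otZ$ on a dense subset of $\fconv$, and your approximation argument cannot close.

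The paper uses a different, essential choice: it restricts $\oZ$ to gauge functions $g_K$ with $K\in\cKo^n$, whose sublevel sets are $\{g_K\le t\}=tK$ and therefore carry the full $t$-dependent scaling of the body. Two points then follow. First, $K\mapsto\oZ(g_K)$ is \emph{not} obviously translation invariant (translating $g_K$ does not yield another gauge function), which is why the paper invokes the $\sln$-only classification Theorem~\ref{thm:blaschke_pon} on $\cPo^n$ rather than Theorem~\ref{thm_Blaschke}; translation invariance is only recovered a posteriori from the resulting formula. Second, and decisively, the family $\{g_P+t:P\in\cPo^n,\ t\in\R\}$, closed up under translations of the argument and the valuation identity, does generate all piecewise-affine convex functions with polytopal domain --- this is exactly the content of the paper's reduction Lemma~\ref{le:reduction_1}, whose base case ($k=1$ vertices of the graph) is precisely a translated, vertically shifted gauge function $g_P(\cdot-x_0)+t$. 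That density-plus-valuation argument is what forces $\otZ\equiv0$. To fix your proof you would have to replace indicator functions by gauge functions throughout, at which point you would be reproducing the paper's Lemmas~\ref{le:oz_on_gauge} and~\ref{le:reduction_1}.
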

We remark that if we omit condition \eqref{eq:vertical_condition}, additional valuations will appear in a classification result \cite{Colesanti-Ludwig-Mussnig-1,Mussnig19,Mussnig21}. However, all known proofs of such results are considerably more involved than the proof of Theorem~\ref{thm:functional_blaschke}, which we present here.

Observe that \eqref{eq:vertical_condition} becomes more natural if we consider the corresponding result on the space of log-concave functions,
$$\LC=\{\E^{-u}\colon u\in\fconv\}.$$
The properties of valuations on $\LC$ are defined analogously to the corresponding properties for valuations on $\fconv$. The following result, which is equivalent to Theorem~\ref{thm:functional_blaschke}, is a consequence of \cite[Theorem 4]{Mussnig:LC}.

\goodbreak
\begin{theorem}
For $n\geq 2$, a map $\oZ:\LC\to\R$ is a continuous, $\SL(n)$ and translation invariant valuation such that
$$\oZ(s f)=s\oZ(f)$$
for every $f\in\LC$ and $s>0$, if and only if there are constants $c_0,c_n\in\R$ such that
$$\oZ(f) = c_0 \max_{x\in\R^n} f(x) + c_n \int_{\R^n} f(x) \,\D x$$
for every $f\in\LC$.
\end{theorem}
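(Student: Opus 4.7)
The ``if'' direction I would read off directly from Lemma~\ref{le:affine_inv_vals} together with the two trivial identities $\min(u+t)=\min u+t$ and $\int\E^{-(u(x)+t)}\,\D x=\E^{-t}\int\E^{-u(x)}\,\D x$, which supply \eqref{eq:vertical_condition}. The real content lies in the converse, and my plan there is to reduce to Blaschke's classification, Theorem~\ref{thm_Blaschke}, and then bootstrap from indicator functions to all of $\fconv$.

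First I would transfer $\oZ$ to a functional on convex bodies by setting $\otZ(K):=\oZ(\ind_K)$, where $\ind_K$ is the convex indicator of $K$. The identities $\ind_K\wedge\ind_L=\ind_{K\cup L}$ and $\ind_K\vee\ind_L=\ind_{K\cap L}$, combined with the fact that $K,L\in\cK^n$ with $K\cup L\in\cK^n$ forces $K\cap L\in\cK^n$ (otherwise strict separation would obstruct convexity of $K\cup L$), make $\otZ$ a valuation on $\cK^n$; translation and $\SL(n)$ invariance are immediate; and Hausdorff convergence $K_j\to K$ implies epi-convergence $\ind_{K_j}\to\ind_K$ since sublevel sets at heights $t\geq 0$ are just $K_j$. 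Theorem~\ref{thm_Blaschke} then supplies $c_0,c_n\in\R$ with $\otZ(K)=c_0 V_0(K)+c_n V_n(K)$, and one application of \eqref{eq:vertical_condition} extends the target formula to all functions of the form $c+\ind_K$.

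Next I would propagate the formula to the dense subfamily of step functions
\[
u_m:=\bigwedge_{i=1}^{m}(t_i+\ind_{K_i}),\qquad t_1<\cdots<t_m,\qquad K_1\subseteq\cdots\subseteq K_m,\ K_i\in\cK^n.
\]
These lie in $\fconv$ and take the value $t_i$ on $K_i\setminus K_{i-1}$ (with $K_0:=\varnothing$). Nestedness of the $K_i$ forces the pleasant identity $u_{m-1}\vee(t_m+\ind_{K_m})=t_m+\ind_{K_{m-1}}$, so the valuation equation collapses into the recursion
\[
\oZ(u_m)=\oZ(u_{m-1})+\E^{-t_m}\bigl(\otZ(K_m)-\otZ(K_{m-1})\bigr),
\]
and a short induction shows $\oZ(u_m)=c_0\E^{-\min u_m}+c_n\int_{\R^n}\E^{-u_m(x)}\,\D x$. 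For an arbitrary $u\in\fconv$, I would then set $t_1:=\min u+1/m$, pick a partition $t_1<\cdots<t_m$ with mesh tending to $0$ and $t_m\to\infty$, and take $K_i:=\{u\leq t_i\}$. The sublevel sets of the associated $u_m$ agree with $\{u\leq t_i\}$ throughout each slab $[t_i,t_{i+1})$ and, since $s\mapsto\{u\leq s\}$ is continuous in the Hausdorff metric for $s>\min u$ (by convexity of $u$ and lower semicontinuity), converge to $\{u\leq t\}$ for every $t\neq\min u$. Hence $u_m\to u$ epi-convergently, and continuity of $\oZ$ combined with continuity of the right-hand side of \eqref{eq:functional_blaschke} (handled exactly as in Lemma~\ref{le:affine_inv_vals}) closes the argument.

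The main obstacle will be this approximation stage. The valuation identity only closes because $u_{m-1}\vee(t_m+\ind_{K_m})$ reduces to a cylinder function $t_m+\ind_{K_{m-1}}$, and this reduction uses nestedness of the $K_i$ in an essential way; and the epi-convergence $u_m\to u$ has to be arranged so as to sidestep the single forbidden height $t=\min u$ in Definition~\ref{def:epi-conv_lvl_sets}, which is why I shrink $t_1$ to $\min u$ from above rather than setting $t_1=\min u$ outright.
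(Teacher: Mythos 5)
Your ``if'' direction is fine, and the first part of the converse is sound: defining $\otZ(K):=\oZ(\ind_K)$ does give a continuous, translation and $\sln$ invariant valuation on $\cK^n$ (your observation that $K\cup L\in\cK^n$ forces $K\cap L\in\cK^n$ is correct, and Hausdorff convergence $K_j\to K$ does imply $\ind_{K_j}\to\ind_K$ under Definition~\ref{def:epi-conv_lvl_sets}). Applying Theorem~\ref{thm_Blaschke} and then the vertical condition \eqref{eq:vertical_condition} does give the target formula on functions $t+\ind_K$.

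The propagation step, however, has a genuine gap. The functions
\[
u_m=\bigwedge_{i=1}^{m}(t_i+\ind_{K_i}),\qquad t_1<\cdots<t_m,\qquad K_1\subseteq\cdots\subseteq K_m,
\]
are quasi-convex (their sublevel sets are the $K_i$) but they are not convex for $m\ge 2$ with $K_1\subsetneq K_2$: such a function is a step function, constant equal to $t_i$ on $K_i\setminus K_{i-1}$, and so has a jump discontinuity across $\partial K_1$ inside $\operatorname{int}\dom u_m$, whereas any convex function is continuous on the interior of its effective domain. (Even in a degenerate case like $K_1=\{0\}$ the epigraph is not convex.) Hence $u_m\notin\fconv$, and since your hypothesis only gives the valuation identity when $f$, $g$, $f\wedge g$, $f\vee g$ all lie in $\fconv$, the recursion
\[
\oZ(u_m)=\oZ(u_{m-1})+\E^{-t_m}\bigl(\otZ(K_m)-\otZ(K_{m-1})\bigr)
\]
cannot be derived. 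The subsequent density argument fails for the same reason: the approximants $u_m$ do not belong to the domain of $\oZ$.

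The paper avoids exactly this pitfall by working with piecewise \emph{affine} convex functions rather than step functions. Instead of $\ind_K$ the basic building blocks are translates $g_P+t$ of gauge functions of polytopes $P\in\cPo^n$ (with the starting classification obtained from Theorem~\ref{thm:blaschke_pon} via Lemma~\ref{le:oz_on_gauge}), and the reduction Lemma~\ref{le:reduction_1} is an induction on the number of vertices of a piecewise affine $u\in\fconv$: at each step one removes a highest vertex by intersecting with $\bar u(x)=g_{\bar P}(x-\bar x)+\bar t$, and the crucial point is that $u\vee\bar u$, $u\wedge\bar u$, and $\bar u$ all remain piecewise affine convex, hence in $\fconv$. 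So the valuation identity is always invoked legitimately. Your scheme is not salvageable by small modifications: any intermediate function whose graph is piecewise constant will fail convexity, so one must replace the step functions by piecewise linear ones and redo the bookkeeping, which is essentially the paper's Lemma~\ref{le:reduction_1}.
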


To prove Theorem~\ref{thm:functional_blaschke}, we will start with the following simple observation.
For $K\in\cKo^n$, where $\cKo^n=\{K\in\cK^n\colon 0 \in K\}$, and $x\in\R^n$, set
$$g_K(x):=\min \{\lambda >0 \colon x\in \lambda K\}.$$
Then $g_K$ is the \emph{gauge function} or \emph{Minkowski functional} of $K$. Since we have $0\in K$, it follows that $g_K\in\fconv$ with
$$\{g_K\leq t\}=tK$$
for every $t\geq 0$ and $\{g_K\leq t\}=\varnothing$ for every $t<0$.
\begin{lemma}
\label{le:oz_on_gauge}
Let $n\geq 2$. If $\oZ:\fconv\to\R$ is a continuous and $\SL(n)$ invariant valuation that satisfies \eqref{eq:vertical_condition}, then there are constants $c_0,c_n\in\R$ such that
$$\oZ(g_K+t)=c_0 \,\E^{-\min_{x\in\R^n} (g_K(x)+t)} + c_n \int_{\R^n} \E^{-(g_K(x)+t)}\,\D x$$
for every $K\in\cKo^n$ and $t\in\R$.
\end{lemma}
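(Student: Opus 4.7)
The plan is first to use the vertical homogeneity condition \eqref{eq:vertical_condition} to eliminate the parameter $t$, and then to transfer $\oZ$ to a continuous, $\sln$ invariant valuation on $\cKo^n$ so that Theorem~\ref{thm:blaschke_pon} can be applied. Since $\oZ(g_K+t)=\E^{-t}\oZ(g_K)$ while $\E^{-\min_{x}(g_K(x)+t)}=\E^{-t}$ and $\int_{\R^n}\E^{-(g_K(x)+t)}\,\D x=\E^{-t}\int_{\R^n}\E^{-g_K(x)}\,\D x$, both sides of the claimed identity carry the same factor $\E^{-t}$, so it suffices to produce $c_0,c_n\in\R$ with
\[
\oZ(g_K)=c_0+c_n\int_{\R^n}\E^{-g_K(x)}\,\D x
\]
for every $K\in\cKo^n$.

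Next I would define $\otZ\colon\cKo^n\to\R$ by $\otZ(K):=\oZ(g_K)$ and verify that $\otZ$ is a continuous, $\sln$ invariant valuation. From $\lambda(K\cup L)=\lambda K\cup \lambda L$ and $\lambda(K\cap L)=\lambda K\cap \lambda L$ for $\lambda>0$, the definition of the Minkowski functional yields
\[
g_{K\cup L}=g_K\wedge g_L,\qquad g_{K\cap L}=g_K\vee g_L,
\]
whenever $K,L\in\cKo^n$ and $K\cup L\in\cKo^n$, so the valuation property of $\oZ$ on $\fconv$ transfers directly to $\otZ$. The $\sln$ invariance of $\otZ$ follows from $g_{\phi K}=g_K\circ\phi^{-1}$ for $\phi\in\sln$. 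For continuity, note that $\{g_K\le t\}=tK$ for $t\ge 0$ and $\min_{x\in\R^n}g_K(x)=0$; hence if $K_j\to K$ in the Hausdorff metric on $\cKo^n$, then $tK_j\to tK$ for every $t>0$, and Definition~\ref{def:epi-conv_lvl_sets} gives $g_{K_j}\to g_K$ in the epi-convergence sense. Continuity of $\oZ$ on $\fconv$ then yields continuity of $\otZ$ on $\cKo^n$.

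Restricting $\otZ$ to $\cPo^n$, Theorem~\ref{thm:blaschke_pon} provides constants $c_0,c_n'\in\R$ such that $\otZ(P)=c_0 V_0(P)+c_n' V_n(P)$ for every $P\in\cPo^n$. Since $\cPo^n$ is dense in $\cKo^n$ and all functionals involved are continuous on $\cKo^n$, the same identity extends to every $K\in\cKo^n$. Finally, by Cavalieri's principle and $\{g_K\le t\}=tK$ for $t\ge 0$,
\[
\int_{\R^n}\E^{-g_K(x)}\,\D x=\int_0^\infty V_n(\{g_K\le t\})\,\E^{-t}\,\D t=V_n(K)\int_0^\infty t^n\E^{-t}\,\D t=n!\,V_n(K),
\]
so setting $c_n:=c_n'/n!$ gives the desired formula.

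The main obstacle is the continuity step: one has to confirm that Hausdorff convergence in $\cKo^n$ produces epi-convergence of the associated gauge functions. This is where the hypothesis $0\in K$ is used in an essential way, since only then is $\{g_K\le t\}=tK$ and the minimum of $g_K$ attained at $0$, placing the ``bad'' level of Definition~\ref{def:epi-conv_lvl_sets} at $t=0$. Beyond this point, the argument is a clean reduction to the body-valued Blaschke-type classification.
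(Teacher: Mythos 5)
Your proposal is correct and follows essentially the same route as the paper: factor out $\E^{-t}$ via \eqref{eq:vertical_condition}, pass to $\otZ(K):=\oZ(g_K)$ on $\cKo^n$ by checking the valuation, $\sln$ invariance and continuity properties via gauge functions, invoke Theorem~\ref{thm:blaschke_pon}, and convert $V_n(K)$ to $\tfrac{1}{n!}\int_{\R^n}\E^{-g_K(x)}\,\D x$. The only point where you are more explicit than the paper is the density step extending the $\cPo^n$ classification to $\cKo^n$, which the paper leaves implicit; this is fine and correct.
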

\begin{proof}
Since
$$g_K\vee g_L = g_{K\cap L}\quad\text{and}\quad g_K \wedge g_L = g_{K\cup L}$$
for every $K,L\in\cKo^n$ such that $K\cup L\in\cKo^n$ and convergence of $K_j$ to $K$ on $\cKo^n$ implies
$$g_{K_j}\to g_K,$$
it is easy to see that
$$\otZ(K):=\oZ(g_K)$$
defines a continuous valuation on $\cKo^n$. Furthermore, since $\oZ$ is $\SL(n)$ invariant, also $\otZ$ has this property.
Thus it follows from Theorem~\ref{thm:blaschke_pon} that there exist $\tilde{c}_0,\tilde{c}_n\in\R$ such that
$$\otZ(K)=\tilde{c}_0 V_0(K) + \tilde{c}_n V_n(K)$$
for every $K\in\cKo^n$. Note that this also implies that $\otZ$ is translation invariant, which is not evident from its definition. Next, observe that
$$\E^{-\min_{x\in\R^n} (g_K(x)+t)} = \E^{-t}=\E^{-t} V_0(K)$$
and
$$\frac{1}{n!} \int_{\R^n} \E^{-(g_K(x)+t)}\,\D x =\frac{\E^{-t}}{n!} \int_0^{\infty} V_n(s K) \E^{-s} \,\D s = \E^{-t} V_n(K)$$
for every $K\in\cKo^n$ and $t\in\R$, where we used a similar computation as in \eqref{eq:rewrite_level_sets}. The result now follows by setting $c_0:=\tilde{c}_0$ and $c_n:={\tilde{c}_n}/{n!}$, since
\begin{align*}
\oZ(g_K+t)&= \E^{-t} \oZ(g_K)\\
&= \E^{-t} \otZ(K)\\
&= \tilde{c}_0\, \E^{-t} V_0(K) + \tilde{c}_n \E^{-t} V_n(K)\\
&= c_0\, \E^{-\min_{x\in\R^n} (g_K(x)+t)} + c_n \int_{\R^n} \E^{-(g_K(x)+t)} \,\D x
\end{align*}
for every $K\in\cKo^n$ and $t\in\R$.
\end{proof}

We will use the following reduction principle, which was first established for valuations on Sobolev spaces in \cite{Ludwig:SobVal}. For simplicity, we will present the proof in dimension one and remark that its extension to higher dimensions is straightforward. See, for example, \cite[Lemma 5.1]{Mussnig19}.
\begin{lemma}
\label{le:reduction_1}
Let $\oZ_1,\oZ_2:\fconv\to\R$ be continuous, translation invariant valuations. If
\begin{equation}
\label{eq:coincide_gauge_function}
\oZ_1(g_P+t)=\oZ_2(g_P+t)
\end{equation}
for every polytope $P\in\cPo^n$ and $t\in\R$, then
$$\oZ_1(u)=\oZ_2(u)$$
for every $u\in\fconv$.
\end{lemma}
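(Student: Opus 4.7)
In the case $n = 1$, let $\oZ := \oZ_1 - \oZ_2$; by hypothesis and the translation invariance of $\oZ_1, \oZ_2$, the difference $\oZ$ vanishes on the class $\cV$ of all functions $g_P(\cdot - y) + t$ with $P \in \cPo^1$ and $y, t \in \R$. Unpacking the gauge computation in dimension one, $\cV$ consists precisely of V-shapes (a vertex with two linear rays of opposite sign slopes), half-V's (a single linear ray, with the complementary half-line sent to $+\infty$), and spikes (finite at a single point, $+\infty$ elsewhere); my task is to propagate the vanishing of $\oZ$ to all of $\fconv$.

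The engine is a \emph{split-at-a-point identity}. Given $p \in \fconv$ and $x_0 \in \R$, set $p_L(x) := p(x)$ for $x \le x_0$ and $+\infty$ otherwise, and similarly $p_R$ on $[x_0, \infty)$. Both $p_L, p_R$ lie in $\fconv$, and a direct check shows $p_L \wedge p_R = p$ while $p_L \vee p_R$ equals the spike $g_{\{0\}}(\cdot - x_0) + p(x_0) \in \cV$. The valuation identity therefore yields
\[
\oZ(p) = \oZ(p_L) + \oZ(p_R).
\]
Since piecewise linear coercive convex functions are epi-dense in $\fconv$ and $\oZ$ is continuous, it suffices to prove $\oZ(p) = 0$ for every such piecewise linear $p$. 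Iterating the split identity at each of the finitely many breakpoints of $p$ decomposes $\oZ(p)$ as a sum of $\oZ$-values on atoms of three types: the left-end half-V (negative slope, on $(-\infty, x_1]$), the right-end half-V (positive slope, on $[x_{k-1}, \infty)$)---both in $\cV$, hence contributing zero---and the interior \emph{bounded-domain linear} atoms $p^{(i)}(x) = s_{i-1} x + d_{i-1}$ on $[x_{i-1}, x_i]$ (set to $+\infty$ outside).

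The main obstacle is to show $\oZ$ vanishes on these bounded-domain linear atoms, which are \emph{not} themselves in $\cV$. To handle them I run the split identity \emph{backwards} starting from a V-shape: fix $v \in \cV$ with vertex $x^*$ and right slope $c > 0$; splitting $v$ at some $x_0 > x^*$ produces a half-V $v_R \in \cV$ and a two-piece one-sided function $v_L$, so that $\oZ(v_L) = \oZ(v) - \oZ(v_R) = 0$. Splitting $v_L$ in turn at $x^*$ yields another half-V (on $(-\infty, x^*]$, in $\cV$) together with the bounded-domain linear function $f(x) = v(x^*) + c(x - x^*)$ on $[x^*, x_0]$, forcing $\oZ(f) = 0$. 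Varying the vertex height, the right slope, and the split point $x_0$ sweeps out every bounded-domain linear atom of positive slope, arbitrary length, and arbitrary left-endpoint value; the negative-slope case is symmetric (split at $x_0 < x^*$), and the slope-zero case follows from continuity of $\oZ$, since $f_c(x) := cx + d$ on $[a, b]$ (with $+\infty$ elsewhere) epi-converges to $f_0 = d + \ind_{[a, b]}$ as $c \to 0$ (sublevel-set convergence holds at every $t \neq d = \min f_0$). Combining these ingredients gives $\oZ(p) = 0$ for every piecewise linear $p \in \fconv$, whence by density and continuity, $\oZ \equiv 0$ on $\fconv$.
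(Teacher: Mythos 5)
Your proof is correct, but the decomposition differs genuinely from the one in the paper. The paper's proof (also carried out only for $n=1$) approximates $u\in\fconv$ by finite-valued piecewise affine functions and inducts on the number of vertices of the graph: at each step it picks a highest vertex $(\bar x,\bar t)$ and a V-shape $\bar u(x)=g_{\bar P}(x-\bar x)+\bar t$ whose arms are chosen so that $u\wedge\bar u$ has one fewer vertex while $u\vee\bar u$ has only one; the valuation identity then reduces the vertex count in a single stroke, with every auxiliary function already of the form $g_P(\cdot-y)+t$. You instead use a cut-at-a-point identity --- with $p_L$, $p_R$ the restrictions of $p$ to the two half-lines at $x_0$ (extended by $+\infty$), one has $p_L\wedge p_R=p$ and $p_L\vee p_R=g_{\{0\}}(\cdot-x_0)+p(x_0)$ --- and iterate it at every breakpoint, atomizing $p$ into two unbounded half-V's plus bounded-domain affine pieces. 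The cost is that those bounded-domain atoms are not themselves of the form $g_P(\cdot-y)+t$, so you need the secondary backward-split argument on a V-shape, plus an epi-limit to reach the zero-slope case; the benefit is a more elementary, local argument that never has to match the arms of an auxiliary V-shape to the geometry of $u$ near a vertex.
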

\begin{proof}[Proof for $n=1$]
Since the valuations $\oZ_1$ and $\oZ_2$ are continuous, it is enough to consider the case where $u\in\fconv$ is such that
$$u=\bigwedge_{i=1}^m w_i$$
with affine functions $w_i:\R^n\to\R$. In addition, we may also assume that the graph of $u$ has no edges parallel to the coordinate axis. We will use induction on the number $k$ of vertices of the graph of $u$.

\begin{figure}[b]
	\centering
	\begin{tikzpicture}[scale=2.5]
        \draw[very thick,->] (-0.05-0.09,0.1) -- (3.8,0.1);
        \draw (3.7,0.14) node[anchor=south]{\fontsize{10}{10}\selectfont $\R$};
        \draw[very thick,->] (-0.09,-0.05+0.1) -- (-0.09,1.95);
        \draw (0.04-0.09,1.85) node[anchor=west]{\fontsize{10}{10}\selectfont $\R$};
        
        \draw[thick] (0.3,1.3)--(0.55,0.5)--(1.1,0.2)--(2,0.7)--(2.4,1.3);
        \draw[thick, dashed] (0.3,1.3)--(0.159375,1.75);
        \draw[thick, dashed] (2.4,1.3) -- (2.7,1.75);
        \draw (0.8,0.35) node[anchor=south west]{\fontsize{10}{10}\selectfont $u$};
        
        \draw[gray, densely dashdotted, thick] (2,0.7)--(2.9,1.3);
        \draw[gray, densely dashdotted, thick] (2,0.7)--(1.8125,1.3);
        \draw[gray, dashed, thick] (1.8125,1.3)--(1.671875,1.75);
        \draw[gray, dashed, thick] (2.9,1.3) -- (3.575,1.75);
        \draw[gray] (2.85,1.2) node[anchor=west]{\fontsize{10}{10}\selectfont $\bar{u}$};
        
        \draw[gray] (1.875,1.1)--(2.6,1.1);
        \draw[gray] (2.05,1.1) node[anchor=south]{\fontsize{10}{10}\selectfont $\bar{P}$};
        
        \draw[draw=black, fill=black] (2,0.7) circle (0.018);
        \draw (2,0.7) node[anchor=north west]{\fontsize{10}{10}\selectfont $(\bar{x},\bar{t})$};
    	\end{tikzpicture}
	\caption{Illustration of $u$ and $\bar{u}$ for the case $k=3$.}
	\label{fig:reduction}
\end{figure}
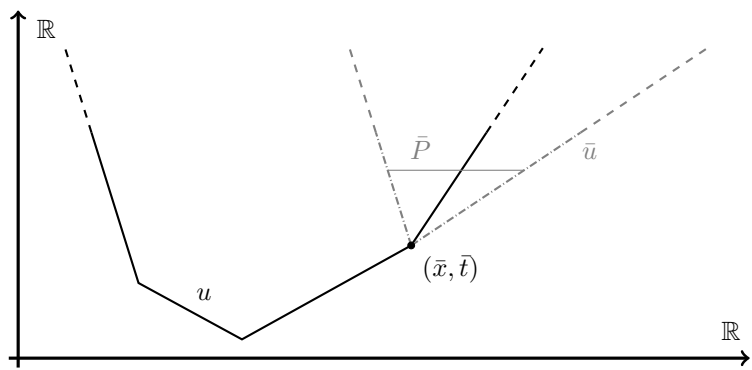

If $k=1$, then $u$ must be of the form
$$u(x)=g_P(x-x_0)+t$$
for some polytope $P\in\cPo^n$, $t\in\R$ and $x_0\in\R$. Thus, it follows from translation invariance and \eqref{eq:coincide_gauge_function} that $\oZ_1(u)=\oZ_2(u)$.

Assume now that the statement is true for $k-1$ and let the graph of $u$ have $k$ vertices. Denote by $(\bar{x},\bar{t})\in\R^2$ one of the highest vertices. It is easy to see that we can find a polytope $\bar{P}\in\cPo^n$ such that
$$\text{the graph of } u \wedge \bar{u} \text{ has } k-1 \text{ vertices},$$
where $\bar{u}\in\fconv$ is given by $\bar{u}(x):=g_{\bar P}(x-\bar{x})+\bar{t}$ for $x\in\R^n$. See Figure~\ref{fig:reduction}.

Since the graph of $u\vee \bar{u}$ has only one vertex, it follows from our induction assumption and the valuation property that
\begin{align*}
\oZ_1(u) &= \oZ_1(u\vee \bar{u})+\oZ_1(u\wedge \bar{u}) - \oZ_1(\bar{u})\\
&=\oZ_2(u\vee \bar{u})+\oZ_2(u\wedge \bar{u})-\oZ_2(\bar{u})\\
&=\oZ_2(u),
\end{align*}
which completes the proof.
\end{proof}

We can now proceed with the proof of our classification result.
\begin{proof}[Proof of Theorem~\ref{thm:functional_blaschke}]
If $\oZ:\fconv\to\R$ is as in \eqref{eq:functional_blaschke}, then it follows from Lemma~\ref{le:affine_inv_vals} that $\oZ$ is a continuous, $\SL(n)$ and translation invariant valuation. Furthermore, it is easy to see that it satisfies \eqref{eq:vertical_condition}.

Conversely, let $\oZ:\fconv\to\R$  be a continuous, translation and $\SL(n)$  invariant valuation that satisfies \eqref{eq:vertical_condition}. By Lemma~\ref{le:oz_on_gauge}, there exist $c_0,c_n\in\R$ such that
$$\oZ(g_K+t)=c_0\, \E^{-\min_{x\in\R^n} (g_K(x)+t)}+c_n \int_{\R^n} \E^{-(g_K(x)+t)} \,\D x$$
for every $K\in\cKo^n$ and $t\in\R$. Define now $\bar{\oZ}:\fconv\to\R$ as
$$\bar{\oZ}(u):=c_0\, \E^{-\min_{x\in\R^n} u(x)} + c_n \int_{\R^n} \E^{-u(x)}\,\D x.$$
By the first part of the proof, this is a continuous and translation invariant valuation such that $\bar{\oZ}(g_P+t)=\oZ(g_P+t)$ for every $P\in\cPo^n$ and $t\in\R$. The result now follows from Lemma~\ref{le:reduction_1}.
\end{proof}

\goodbreak
\subsection{Homogeneity}
For $u\in \fconv$ and $\lambda>0$, define $\lambda \odot u\in\fconv$ by
$$\lambda \odot u (x):=u\left(\frac x\lambda\right)$$
for $x\in\R^n$.
This definition is motivated by the fact that for $t\in\R$,
$$\{\lambda \odot u \leq t\}= \lambda \{u\leq t\}.$$
For $p\in\R$, an operator $\oZ:\fconv\to\R$ is \emph{horizontally $p$-homogeneous}  if
$$\oZ(\lambda\odot u)=\lambda^p\oZ(u)$$
for every $u\in\fconv$ and $\lambda>0$.

It is straightforward to see that $u\mapsto \E^{-\min_{x\in\R^n} u(x)}$ is horizontally $0$-homogeneous and that $u\mapsto \int_{\R^n} \E^{-u(x)} \,\D x$ is horizontally $n$-homogeneous. One might hope that similar to McMullen's decomposition theorem for valuations on $\cK^n$, Theorem~\ref{thm_hom_decomp}, every continuous and translation invariant valuation on $\fconv$ can be written as a sum of horizontally homogeneous valuations. However, such a result fails, and counterexamples were constructed in \cite[Theorem 1.2]{Colesanti-Ludwig-Mussnig-3}, where the following classes of functionals were studied. For $\zeta\in C_c(\R\times\R^n)$, the space of continuous functions with compact support on $\R\times \R^n$, consider
\begin{equation*}
\label{eq:counter_example_1}
u \mapsto \int_{\dom u} \zeta(u(x),\nabla u(x)) \,\D x,
\end{equation*}
which is well-defined since convex functions are differentiable almost everywhere on the interior of their domains. More general examples can be written as 
\begin{equation}
\label{eq:counter_example_2}
u \mapsto \int_{\R^{n}}\zeta(u(x),\nabla u(x))\,[\Hess u(x)]_i \, \D x,
\end{equation}
if in addition $u\in C^2(\R^n)$. Here $\Hess u(x)$ denotes the Hessian matrix of $u$ at $x\in \R^n$ and $[\Hess u(x)]_i$  the $i$th \emph{elementary symmetric function} of its eigenvalues for $0\leq i \leq n$. We remark that \eqref{eq:counter_example_2} can be extended to general $u\in\fconv$ where essentially $[\Hess u(x)]_i \, \D x$ is replaced by the so-called 
\emph{Hessian measures}.
The examples above are continuous and translation invariant. Still, due to their dependence on the gradient of the convex function, they cannot be decomposed into homogeneous terms  for all $\zeta\in C_c(\R\times\R^n)$ (see \cite{Colesanti-Ludwig-Mussnig-3} for details). Nevertheless, we will establish a functional analog of Theorem~\ref{thm_hom_decomp} in Section~\ref{se:hom_conv_fct}.

\section{Valuations on Super-Coercive Convex Functions}
\label{se:val_super_coerc}
To obtain a homogeneous decomposition theorem, we will restrict to the smaller space of super-coercive convex functions and shift our attention from sublevel sets to epi-graphs.

\subsection{Definitions and First Examples}
We consider the space of super-coercive convex functions,
$$\fconvs:=\Big\{u\in\fconvx\colon \lim_{|x|\to+\infty} \frac{u(x)}{|x|}=+\infty\Big\}.$$
Obviously, $\fconvs$ is a subspace of $\fconv$. Note that for differentiable $u\in\fconvs$, the property 
$$\lim_{|x|\to+\infty} \frac{u(x)}{|x|}=+\infty$$
implies that also
\begin{equation}
\label{eq:grad_infty}
\lim_{|x|\to+\infty} |\nabla u(x)| =  +\infty.
\end{equation}

The space of super-coercive convex functions is, in the following way, closely connected to the space of finite-valued convex functions,
$$\fconvf:=\{v:\R^n\to \R\colon v\text{ is convex}\}.$$
Recall that the Legendre transform is defined for $u\in\fconvx$ by
$$u^*(x):=\sup_{y\in\R^n} \big(\langle x,y \rangle - u(y) \big)$$
for $x\in\R^n$. By standard properties of the Legendre transform, we now have
\begin{equation}
\label{eq:fconvs_start_fconvf}
\{u^*\colon u\in\fconvs\}=\fconvf.
\end{equation}
This relation allows us to translate results for valuations on $\fconvs$ easily to results on $\fconvf$ and vice versa.

\goodbreak
A valuation $\oZ:\fconvs\to\R$ is  \emph{epi-translation invariant} if it is vertically translation invariant in addition to having the usual translation invariance, that is, if
$$\oZ(u\circ \tau^{-1}+\alpha)=\oZ(u)$$
for every $u\in\fconvs$, translation $\tau$ on $\R^n$ and $\alpha\in\R$. Note that this means that $\oZ$ is invariant under translations of the epi-graph of $u$ in $\R^{n+1}$, where the epi-graph of $u$ is given by
$$\epi u:=\{(x,t)\in\R^n\times \R\colon u(x)\leq t\}$$
and is a closed, convex subset of $\R^{n+1}$ for every $u\in \fconvx$.

\goodbreak
For $u,v\in\fconvs$, let 
$$(u\infconv v)(x):= \inf_{y\in\R^n} \big(u(x-y)+v(y)\big)$$
denote their \emph{infimal convolution} at $x\in\R^n$. Note that also $u\infconv v\in\fconvs$ and that
$$\epi (u\infconv v) = \epi u + \epi v,$$
where the addition on the right side is Minkowski addition of closed, convex sets in $\R^{n+1}$. 
The infimal convolution is also called \emph{epi-addition}. 
It naturally induces the following operation. 
For $\lambda >0$ and $u\in\fconvs$, define $\lambda\sq u\in\fconvs$ as
$$(\lambda \sq u)(x):=\lambda \sq u\left(\frac x\lambda \right)$$
for $x\in\R^n$. In addition, set $0\sq u := \ind_{\{0\}}$. It is easy to see that
$$\epi(\lambda \sq u)= \lambda \epi u$$
for every $\lambda >0$ and $u\in\fconvs$ and that
$$k\sq u = \underbrace{u \infconv \cdots \infconv u}_{k\text{ times}}$$
for every $k\in\N$ and $u\in\fconvs$.

For $p\in\R$, a valuation $\oZ:\fconvs\to\R$ is \emph{epi-homogeneous of degree $p$} if
$$\oZ(\lambda \sq u) = \lambda^p \oZ(u)$$
for every $u\in\fconvs$ and $\lambda>0$.
We will present two examples of continuous, epi-homogeneous, and epi-translation invariant valuations on $\fconvs$.

First, it is easy to see that the constant map $u\mapsto c$ for $c\in\R$ defines a continuous, epi-translation invariant valuation that is epi-homogeneous of degree $0$. In fact, it is the only valuation with these  properties (see \cite[Theorem 25]{Colesanti-Ludwig-Mussnig-4}).

\goodbreak
Next, let $\zeta \in C_c(\R^n)$, that is, $\zeta$ is continuous with compact support.  Consider the map
\begin{equation}
\label{eq:z_int_grad}
\oZ(u) = \int_{\dom u} \zeta(\nabla u(x)) \,\D x
\end{equation}
for $u\in\fconvs$.
Because of \eqref{eq:grad_infty}, it is at least plausible that $\oZ(u)$ is well-defined and finite for every $u\in\fconvs$. On the other hand, the example $u(x):=|x|$ shows that $\oZ$ defined by \eqref{eq:z_int_grad} is not a well-defined (finite) map on the larger space $\fconv$.

\goodbreak
We will state the following result from \cite[Proposition 20]{Colesanti-Ludwig-Mussnig-4} without proof.

\begin{lemma} For $\zeta\in C_c(\R^n)$, the map
$$u\mapsto \int_{\dom u} \zeta(\nabla u(x)) \,\D x$$
defines a continuous and epi-translation invariant valuation on $\fconvs$, which is epi-homogeneous of degree $n$.
\label{le:int_grad_is_a_val}
\end{lemma}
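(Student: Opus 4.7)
The plan is to verify finiteness, epi-translation invariance, epi-homogeneity of degree $n$, the valuation identity, and continuity in sequence, with the valuation identity being the principal difficulty.

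For finiteness, fix $R>0$ with $\operatorname{supp}\zeta\subseteq R\,B^n$. Property \eqref{eq:grad_infty}, which follows from super-coercivity, ensures that $\{x\in\dom u:|\nabla u(x)|\le R\}$ sits inside a bounded ball; hence $\zeta\circ\nabla u$ is a bounded function with support of finite Lebesgue measure. Epi-translation invariance is immediate: adding a constant does not affect $\nabla u$, and translating the argument by $\tau$ induces a unit-Jacobian change of variables. For epi-homogeneity of degree $n$, the identity $(\lambda\sq u)(x)=\lambda\,u(x/\lambda)$ gives $\nabla(\lambda\sq u)(x)=\nabla u(x/\lambda)$ almost everywhere on $\lambda\dom u$, and the substitution $y=x/\lambda$ produces the factor $\lambda^n$.

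For the valuation identity, fix $u,v\in\fconvs$ with $u\wedge v,u\vee v\in\fconvs$ and partition $\dom u\cap\dom v$ into the open sets $A:=\{u<v\}$, $B:=\{u>v\}$ and the closed set $C:=\{u=v\}$. On $A$ one has locally $u\wedge v=u$ and $u\vee v=v$, so $\nabla(u\wedge v)=\nabla u$ and $\nabla(u\vee v)=\nabla v$ almost everywhere there; the symmetric statement holds on $B$. On $C$, if $u$ and $v$ are both differentiable at $x_0\in C$ with $\nabla u(x_0)\neq\nabla v(x_0)$, a short computation along the line through $x_0$ in direction $\nabla u(x_0)-\nabla v(x_0)$ shows that $u\wedge v$ would admit a strict downward kink at $x_0$, contradicting convexity; hence $\nabla u=\nabla v$ almost everywhere on $C$, and consequently $\nabla(u\wedge v)=\nabla(u\vee v)=\nabla u=\nabla v$ a.e.\ on $C$. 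On the asymmetric piece $\dom u\setminus\dom v$, one has $u\wedge v=u$ while $u\vee v$ is not finite, so this region contributes to the $u$ and $u\wedge v$ integrals only, in matching amounts; analogously for $\dom v\setminus\dom u$. Summing over all pieces yields $\oZ(u)+\oZ(v)=\oZ(u\wedge v)+\oZ(u\vee v)$.

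Finally, for continuity let $u_k\to u$ in $\fconvs$. A uniform super-coercivity estimate for this tighter space (the natural analog of Lemma~\ref{le:uniform_cone}, obtained from the fact that epi-limits of super-coercive functions are controlled from below by a common super-linear minorant) confines $\{x:|\nabla u_k(x)|\le R\}$ inside a common bounded set, yielding an integrable majorant. Epi-convergence of convex functions then gives $\nabla u_k(x)\to\nabla u(x)$ at almost every $x$ where $\nabla u$ exists (Attouch's theorem), and dominated convergence closes the proof. The chief obstacle throughout is the handling of the valuation identity on the exceptional set $C=\{u=v\}$: the identification $\nabla u=\nabla v$ a.e.\ there must be extracted from the convexity assumption on $u\wedge v$; once this local-to-global matching is secured, the remaining verifications reduce to substitutions and routine estimates.
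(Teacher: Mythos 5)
Your proposal is correct, but it takes a genuinely different route from the paper's. The paper defers to \cite[Proposition~20]{Colesanti-Ludwig-Mussnig-4}, where the key step is to pass to $\fconvf$ via the Legendre transform and to recognize the functional as $v\mapsto\int_{\R^n}\zeta(x)\,\D\MA(v;x)$; the valuation property and continuity are then inherited from the corresponding (well-established) properties of the Monge--Amp\`ere measure $\MA(v;\cdot)$ as a weakly continuous, measure-valued valuation on $\fconvf$. You instead attack the functional directly on $\fconvs$, splitting $\dom u\cup\dom v$ into $\{u<v\}$, $\{u>v\}$, $\{u=v\}$ and the two asymmetric pieces, and showing by a pointwise one-dimensional convexity argument that on $\{u=v\}$ the gradients $\nabla u$, $\nabla v$, $\nabla(u\wedge v)$, $\nabla(u\vee v)$ agree almost everywhere once $u\wedge v$ is assumed convex. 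This is the same geometric mechanism that underlies the valuation property of $\MA$, but worked out by hand rather than quoted, so your argument is more elementary and self-contained, at the cost of being longer. It checks out: the kink argument on $C$ is sound, and one must (as you implicitly do) also verify that once $\nabla u(x_0)=\nabla v(x_0)$ at $x_0\in C$, the functions $u\wedge v$ and $u\vee v$ are both differentiable at $x_0$ with that common gradient, which follows by squeezing between the two tangent planes. The continuity argument is correct in outline but would need two facts supplied in a fully written proof: first, that a sequence $u_k\to u$ in $\fconvs$ admits a common super-coercive convex minorant, which is most cleanly obtained by passing to $u_k^*$ and using local uniform convergence on $\fconvf$; and second, that for $x\notin\dom u$ one has $\zeta(\nabla u_k(x))\to 0$, which follows because a bounded sequence of subgradients of $u_k$ at such an $x$ would force $u_k\to+\infty$ everywhere, contradicting epi-convergence to a proper limit. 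With those two points made explicit, the proposal is a complete alternative proof; the paper's route buys brevity by offloading everything onto the Monge--Amp\`ere machinery, while yours avoids invoking that theory at all.
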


\goodbreak
\noindent
We remark that in the proof of this lemma, the map
$$v \mapsto \int_{\dom v^*} \zeta(\nabla v^*(x)) \,\D x$$
on $\fconvf$ is considered, 
which is well-defined by \eqref{eq:fconvs_start_fconvf}. This map can be written as
$$v\mapsto \int_{\R^n} \zeta(x) \,\D \MA(v;x)$$
for $v\in\fconvf$. Here, $\MA(v;\cdot)$, the \emph{Monge--Amp\`ere measure} of $v$, is a Radon measure on $\rn$, which can be defined as a continuous extension of the measure $\det(\Hess v(x))\,\D x$ from $C^2(\R^n)$ to $\fconvf$.

The valuation $\oZ$ defined in \eqref{eq:z_int_grad} can be seen as a further generalization of $n$-dimen\-sional volume on convex bodies to convex functions. Indeed, we have
\begin{equation}
\label{eq:int_grad_vol}
\oZ(\ind_K) = \int_K \zeta(0) \,\D x = \zeta(0) V_n(K)
\end{equation}
for every $K\in\cK^n$. See also Section~\ref{sec:current}.

\subsection{A Homogeneous Decomposition Theorem}
\label{se:hom_conv_fct}
We will prove a functional analog from \cite{Colesanti-Ludwig-Mussnig-4} of the homogeneous decomposition theorem, Theorem~\ref{thm_hom_decomp}.
\begin{theorem}
If $\oZ\colon\fconvs\to\R$ is a continuous, epi-translation invariant valuation, then
$$\oZ = \oZ_0+\cdots+\oZ_n$$
where $\oZ_j:\fconvs\to\R$ is a continuous, epi-translation invariant valuation that is epi-homogeneous of degree $j$.
\label{thm:mcmullen_fcts}
\end{theorem}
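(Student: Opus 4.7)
I would transpose the Hadwiger--McMullen argument underlying Theorem~\ref{thm_hom_decomp} from Minkowski addition on $\cK^n$ to epi-addition on $\fconvs$. For $u\in\fconvs$ and $k\in\N$, set $\phi_u(k):=\oZ(k\sq u)$. The heart of the argument is to show that $\phi_u$ is the restriction to $\N$ of a polynomial in $k$ of degree at most $n$. Once this is established, the Vandermonde system
\[
\oZ(k\sq u)\,=\,\sum_{j=0}^{n} k^{j}\,\oZ_{j}(u),\qquad k=1,\ldots,n+1,
\]
has a unique solution and yields explicit formulas $\oZ_{j}(u)=\sum_{i=1}^{n+1}\alpha_{ij}\,\oZ(i\sq u)$ with constants $\alpha_{ij}\in\R$ depending only on $n$ and $j$; by construction $\oZ=\oZ_{0}+\cdots+\oZ_{n}$.

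Each $\oZ_{j}$ is then a continuous, epi-translation invariant valuation: continuity follows from continuity of $\oZ$ together with the fact that $u_{k}\to u$ in $\fconvs$ implies $i\sq u_{k}\to i\sq u$ via the sublevel-set identity $\{i\sq u\le t\}=i\{u\le t/i\}$; epi-translation invariance follows from $i\sq(u\circ\tau^{-1}+\alpha)=(i\sq u)\circ\tilde\tau^{-1}+i\alpha$, where $\tilde\tau$ is the translation by $i$ times the translation vector of $\tau$; and the valuation property transfers from $\oZ$ thanks to the pointwise identities $(i\sq u)\vee(i\sq v)=i\sq(u\vee v)$ and $(i\sq u)\wedge(i\sq v)=i\sq(u\wedge v)$, both of which hold because $\sq$ only rescales the argument and value of a function pointwise. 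Finally, epi-$j$-homogeneity follows by comparing coefficients in $\oZ((km)\sq u)=\oZ(k\sq(m\sq u))$: this yields $\oZ_{j}(m\sq u)=m^{j}\oZ_{j}(u)$ for $m\in\N$, which extends to all $\lambda>0$ by continuity.

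To prove polynomiality of $\phi_u$ I would pass to epigraphs in $\R^{n+1}$. For $u\in\fconvs$, super-coercivity forces the recession cone of $\epi u$ to be exactly the upward ray $R:=\{0\}\times[0,\infty)$, so $\epi u=Q+R$ for a suitable bounded set $Q\subset\R^{n+1}$, and since $R$ is a cone one has $\epi(k\sq u)=k\,\epi u=kQ+R$. I would first reduce to piecewise affine $u$ (a dense subspace of $\fconvs$; continuity of $\oZ$ propagates polynomiality of bounded degree through epi-limits), in which case $Q$ may be chosen as the graph of $u$, a finite union of polytopes each of dimension at most $n$ in $\R^{n+1}$. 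Applying the $\ell$-cylinder decomposition of Theorem~\ref{thm_icylinder} to each such polytope inside its affine hull gives a dissection of $kQ$ into at most $\sum_{\ell=1}^{n}\binom{k}{\ell}$ translates of fixed $\ell$-cylinders; re-adding the recession $R$ translates this into an ``epi-dissection'' of $k\sq u$, and the valuation property of $\oZ$ applied via the correspondence between $\vee,\wedge$ of functions and $\cap,\cup$ of epigraphs produces
\[
\oZ(k\sq u)\,=\,\sum_{\ell=1}^{n}\binom{k}{\ell}\,\oZ(w_{\ell})
\]
for suitable fixed $w_{\ell}\in\fconvs$, a polynomial in $k$ of degree at most $n$ as required.

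The main obstacle will be carrying out this epi-dissection rigorously. The valuation property of $\oZ$ applies only to pairs $u,v$ with $u\vee v\in\fconvs$, which at the epigraph level translates into the constraint that the intersection of the corresponding epigraphs in $\R^{n+1}$ must itself be the epigraph of a convex function; realising the polyhedral decomposition of $kQ$ provided by Theorem~\ref{thm_icylinder} as a sequence of pairwise $\vee,\wedge$-operations on functions each respecting this constraint---and showing that the degree bound $n$ (rather than $n+1$) genuinely comes out, by using that $Q$ is at most $n$-dimensional in $\R^{n+1}$---is the technical point parallel to how Theorem~\ref{thm_icylinder} itself is deduced from the canonical simplex decomposition in the polytope setting.
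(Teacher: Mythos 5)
Your plan is genuinely different from the paper's, and it has a real gap that you yourself flag.

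The paper never tries to prove polynomiality of $k\mapsto\oZ(k\sq u)$ directly for general $u$. Instead it observes that for each fixed $y\in\R^n$ the map $K\mapsto\oZ(\ell_y+\ind_K)$ is a continuous, translation invariant valuation on $\cK^n$ (Lemma~\ref{le:z_y_is_a_val}), and since $\lambda\sq(\ell_y+\ind_K)=\ell_y+\ind_{\lambda K}$, the polynomial structure in $\lambda$ is imported wholesale from McMullen's Theorem~\ref{thm_hom_decomp} applied on $\cK^n$. The Vandermonde inversion then defines $\oZ_j$ exactly as you propose, but the decomposition is a priori validated only on functions of the form $\ell_y+\ind_K$. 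The final ingredient is the reduction Lemma~\ref{le:reduction_2}: a continuous, epi-translation invariant valuation vanishing on all $\ell_y+\ind_P$, $P\in\cP^n$, vanishes identically. This is proved by reducing to piecewise affine $u$ and either an induction on the number of affine pieces or the inclusion-exclusion principle. Running this with $\bar\oZ:=\oZ-\sum\oZ_i$ and with $\bar\oZ_{\lambda,i}(u):=\oZ_i(\lambda\sq u)-\lambda^i\oZ_i(u)$ closes the proof. The whole argument is short precisely because McMullen's theorem is used as a black box rather than re-proved.

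Your proposal instead tries to transport Hadwiger's $\ell$-cylinder mechanism (Theorem~\ref{thm_icylinder}) to epigraphs in $\R^{n+1}$. The ingredients you line up are all plausible: $\epi(k\sq u)=k\,\epi u$, super-coercivity pins down the recession cone to be the vertical ray $R$, and for a piecewise affine $u$ with compact domain the graph is a finite union of at-most-$n$-dimensional polytopes to each of which Theorem~\ref{thm_icylinder} could be applied inside its affine hull. But you correctly identify that converting the resulting dissection of $k\,Q$ into a chain of $\vee/\wedge$ operations on members of $\fconvs$ is the unresolved technical core: Theorem~\ref{thm_icylinder} produces translates $\tau(C_\ell)$ living in the $n$-dimensional affine hull of each graph piece, these do not dissect the epigraph (only the graph), and the passage from a polyhedral dissection of $Q$ to a sequence of valuation identities requires the intermediate unions to be epigraphs of functions in $\fconvs$ and the intersection terms to be controlled via inclusion-exclusion (since $\oZ$ here is not simple). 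None of this is impossible, but it is exactly the content you would need to supply and you have not supplied it. Moreover, the degree-$n$ (rather than $n+1$) bound is something you assert rather than derive; making that explicit from the fact that each graph piece is $\le n$-dimensional in $\R^{n+1}$ is another step. In effect you would be re-proving the polynomiality half of McMullen's theorem in a more complicated ambient setting; the paper avoids that entirely by moving the problem onto $\cK^n$ where the theorem is already available, and then extending to all of $\fconvs$ by a comparatively light reduction lemma. If you want to pursue your route, the part to focus on is precisely the ``epi-dissection'' step; as written, that step is a gap.
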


Similar to the proof of Theorem~\ref{thm:functional_blaschke}, we first show that the result is true on a restricted set of functions and then use a reduction argument. For $y\in\R^n$, define $\ell_y:\R^n\to\R$  by
$\ell_y(x):=\langle y,x\rangle$.

\begin{lemma}
\label{le:z_y_is_a_val}
Let $\,\oZ:\fconvs\to\R$ be a continuous, epi-translation invariant valuation. For every $y\in\R^n$, the map $\otZ_y:\cK^n\to\R$, defined by
$$\otZ_y(K):=\oZ(\ell_y+\ind_K),$$
is a continuous and translation invariant valuation.
\end{lemma}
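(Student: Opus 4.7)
The proof will proceed by verifying the three properties in turn. First, I will check that $\ell_y+\ind_K\in\fconvs$ for every $K\in\cK^n$: the function is convex and lower semicontinuous with compact effective domain $K$, so super-coercivity is trivial (the function equals $+\infty$ outside a compact set). With $\otZ_y(\varnothing):=0$ by convention, the map $\otZ_y:\cK^n\to\R$ is well-defined. For the valuation property, I will use the elementary identities $\ind_K\vee\ind_L=\ind_{K\cap L}$ and $\ind_K\wedge\ind_L=\ind_{K\cup L}$, which, after adding the common term $\ell_y$, upgrade to
\[
(\ell_y+\ind_K)\vee(\ell_y+\ind_L)=\ell_y+\ind_{K\cap L},\qquad (\ell_y+\ind_K)\wedge(\ell_y+\ind_L)=\ell_y+\ind_{K\cup L}.
\]
For $K,L\in\cK^n$ with $K\cup L\in\cK^n$, a standard connectedness argument (a segment joining a point of $K$ to a point of $L$ cannot be partitioned into two nonempty disjoint closed subsets) shows $K\cap L\neq\varnothing$, so all four functions lie in $\fconvs$. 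Applying the valuation identity for $\oZ$ on $\fconvs$ then delivers the valuation property of $\otZ_y$.

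Next, I will handle translation invariance. For a translation $\tau(x)=x+v$ on $\R^n$, a direct calculation gives
\[
\ell_y+\ind_{\tau K}=(\ell_y+\ind_K)\circ\tau^{-1}+\langle y,v\rangle,
\]
so $\ell_y+\ind_{\tau K}$ arises from $\ell_y+\ind_K$ by an epi-translation, namely a horizontal shift by $v$ composed with a vertical shift by $\langle y,v\rangle$. Since by hypothesis $\oZ$ is epi-translation invariant, meaning invariant under both $u\mapsto u\circ\tau^{-1}$ and $u\mapsto u+\alpha$, this forces $\otZ_y(\tau K)=\otZ_y(K)$. It is precisely here that the vertical invariance built into the definition of epi-translation invariance is used; without it, translation invariance of $\otZ_y$ could fail whenever $y\neq 0$.

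Finally, I turn to continuity, which I expect to require the most care. If $K_j\to K$ in $\cK^n$ with respect to the Hausdorff metric, then $\ind_{K_j}$ epi-converges to $\ind_K$, because the sublevel sets $\{\ind_{K_j}\leq t\}$ are just $K_j$ for $t\geq 0$ (and empty for $t<0$), which converge to $K$ (respectively $\varnothing$) in Hausdorff metric. Since $\ell_y$ is continuous and finite-valued, adding it preserves epi-convergence, so $\ell_y+\ind_{K_j}\to\ell_y+\ind_K$ in the sense of Definition~\ref{def:epi-conv_lvl_sets}, and continuity of $\oZ$ then yields $\otZ_y(K_j)\to\otZ_y(K)$. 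The only subtle point is the verification of Hausdorff convergence of the sublevel sets $\{\ell_y+\ind_{K_j}\leq t\}=K_j\cap\{\ell_y\leq t\}$ at intermediate levels $t$ where the hyperplane $\{\ell_y=t\}$ cuts $K$ into a proper cap; this follows from standard properties of Hausdorff convergence under intersection with a fixed half-space that is not tangent to the limit body, and the one exceptional level $t=\min_K\ell_y$ is precisely the value the definition of epi-convergence permits us to ignore.
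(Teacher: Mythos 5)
Your proof is correct and follows the same route as the paper. The paper disposes of continuity and the valuation property with a one-line remark and writes out only the translation-invariance computation, which you reproduce verbatim; your contribution is to fill in the two ``easy'' parts in detail, and those details are right: the identity $\ind_K \vee \ind_L=\ind_{K\cap L}$, $\ind_K\wedge\ind_L=\ind_{K\cup L}$ together with the connectedness argument that $K\cup L\in\cK^n$ forces $K\cap L\neq\varnothing$ settles the valuation property, and the observation that $\{\ell_y+\ind_{K_j}\leq t\}=K_j\cap\{\ell_y\leq t\}$ converges in Hausdorff metric to $K\cap\{\ell_y\leq t\}$ for every $t>\min_K\ell_y$ (and is eventually empty for $t<\min_K\ell_y$) settles continuity, with the single excluded level $t=\min_K\ell_y$ being exactly the one that Definition~\ref{def:epi-conv_lvl_sets} discards.
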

\begin{proof}
Continuity and the valuation property are easy to obtain. For translation invariance, observe that
\begin{align*}
\ell_y(x)+\ind_{K+x_0}(x) &= \langle y,x \rangle + \ind_K(x-x_0)\\
&= \langle y,x-x_0 \rangle + \ind_K(x-x_0)+\langle y,x_0 \rangle\\
&= \ell_y(x-x_0)+\ind_K(x-x_0)+\langle y,x_0\rangle
\end{align*}
for every $K\in\cK^n$ and $x,x_0,y\in\R^n$. In other words, the epi-graph of $\ell_y + \ind_{K+x_0}$ is a translate of the epi-graph of $\ell_y+\ind_K$. Thus, by the epi-translation invariance of $\oZ$, we now obtain that
$$\otZ_y(K+x_0)=\oZ(\ell_y+\ind_{K+x_0})=\oZ(\ell_y+\ind_K)=\otZ_y(K)$$
for every $K\in\cK^n$ and $x_0,y\in\R^n$.
\end{proof}

Next, we use Theorem~\ref{thm_hom_decomp} to show that Theorem~\ref{thm:mcmullen_fcts} holds on functions of the form $\ell_y+\ind_K$ with $y\in\R^n$ and $K\in\cK^n$.
\begin{lemma}
\label{le:z_i}
If $\,\oZ:\fconvs\to\R$ is a continuous and epi-translation invariant valuation, then for every $u=\ell_y+\ind_K$ with $y\in\R^n$ and $K\in\cK^n$,
$$\oZ(u)=\oZ_0(u)+\cdots + \oZ_n(u)$$
where  $\oZ_i:\fconvs\to\R$ is a continuous and epi-translation invariant valuation such that
$$\oZ_i(\lambda \sq u) = \lambda^i \oZ(u)$$
for every $\lambda\geq 0$ and $u=\ell_y+\ind_K$ with $y\in\R^n$ and $K\in\cK^n$.
\end{lemma}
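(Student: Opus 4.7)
The plan is to reduce Lemma~\ref{le:z_i} to the classical homogeneous decomposition Theorem~\ref{thm_hom_decomp}, applying it to the parametrized family of convex-body valuations produced by Lemma~\ref{le:z_y_is_a_val} and then pulling back the resulting decomposition to $\fconvs$ by Vandermonde inversion.

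First, I would record the elementary identity that for $u = \ell_y + \ind_K$ with $y \in \R^n$ and $K \in \cK^n$,
\[
\lambda \sq u = \ell_y + \ind_{\lambda K}
\]
for every $\lambda \geq 0$ (interpreting $\ind_{0\cdot K} = \ind_{\{0\}}$); this follows directly from $(\lambda \sq u)(x) = \lambda u(x/\lambda)$ and the linearity of $\ell_y$. Combined with Lemma~\ref{le:z_y_is_a_val} and Theorem~\ref{thm_hom_decomp} applied to the continuous, translation invariant valuation $\otZ_y(K) := \oZ(\ell_y + \ind_K)$, this yields
\[
\oZ(\lambda \sq u) = \otZ_y(\lambda K) = \sum_{i=0}^n \lambda^i\, \otZ_{y,i}(K),
\]
which is a polynomial of degree at most $n$ in $\lambda$, the $\otZ_{y,i}$ being $i$-homogeneous in the sense of Theorem~\ref{thm_hom_decomp}.

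Next, I would invert the Vandermonde system obtained from evaluation at $\lambda = 0, 1, \dots, n$ to produce constants $\alpha_{ij} \in \R$, depending only on $i$ and $j$, with $\otZ_{y,i}(K) = \sum_{j=0}^n \alpha_{ij}\, \oZ(j \sq u)$ for every $y$ and $K$. This motivates the definition
\[
\oZ_i(u) := \sum_{j=0}^n \alpha_{ij}\, \oZ(j \sq u),\quad u \in \fconvs.
\]
Each summand $u \mapsto \oZ(j \sq u)$ is itself a continuous, epi-translation invariant valuation on $\fconvs$: the valuation property rests on the pointwise identities $j \sq (u \vee v) = (j \sq u) \vee (j \sq v)$ and $j \sq (u \wedge v) = (j \sq u) \wedge (j \sq v)$; continuity follows from $\epi(j \sq u) = j\, \epi u$ combined with the preservation of Hausdorff convergence under scaling; and epi-translation invariance follows from $j \sq (u \circ \tau^{-1} + \alpha) = (j \sq u) \circ \tilde\tau^{-1} + j\alpha$, where $\tilde\tau$ denotes the translation by $j$ times the translation vector of $\tau$. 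Hence each $\oZ_i$ inherits these three structural properties.

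Finally, I would verify the two claims on the distinguished class $u = \ell_y + \ind_K$. Evaluation at $\lambda = 1$ gives $\oZ(u) = \sum_{i=0}^n \otZ_{y,i}(K) = \sum_{i=0}^n \oZ_i(u)$, while the identity $j \sq (\lambda \sq u) = (j\lambda) \sq u$ together with the Vandermonde inversion relation $\sum_{j=0}^n \alpha_{ij}\, j^k = \delta_{ik}$ produces $\oZ_i(\lambda \sq u) = \lambda^i\, \oZ_i(u)$ for every $\lambda \geq 0$. The only genuine obstacle is bookkeeping: keeping the indices of the Vandermonde inversion aligned with the homogeneity degrees and checking that the three structural properties survive the map $u \mapsto \oZ(j \sq u)$. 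Conceptually, the entire argument is a transfer of Theorem~\ref{thm_hom_decomp} across the embedding $K \mapsto \ell_y + \ind_K$ of $\cK^n$ into $\fconvs$.
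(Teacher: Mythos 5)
Your proof is correct and follows essentially the same route as the paper: apply Lemma~\ref{le:z_y_is_a_val} and Theorem~\ref{thm_hom_decomp} to $\otZ_y(K)=\oZ(\ell_y+\ind_K)$, invert the Vandermonde system at $\lambda=0,1,\dots,n$, and define $\oZ_i(u)=\sum_{j=0}^n\alpha_{ij}\oZ(j\sq u)$. The only difference is that you spell out the lattice and translation identities satisfied by $u\mapsto j\sq u$ that the paper leaves implicit; the content is identical.
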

\begin{proof}
For $y\in\R^n$, define $\otZ_y:\cK^n\to\R$  by $\otZ_y(K):=\oZ(\ell_y+\ind_K)$. It follows from Lemma~\ref{le:z_y_is_a_val} and Theorem~\ref{thm_hom_decomp} that for every $y\in\R^n$ there exist continuous, translation invariant and $i$-homogeneous valuations $\otZ_{y,i}:\cK^n\to\R$ for $0\leq i\leq n$ such that
$$\otZ_y(K)=\sum_{i=0}^n \otZ_{y,i}(K)$$
for every $K\in\cK^n$. Thus,
$$\oZ(\lambda \sq (\ell_y+\ind_K))=\oZ(\ell_y+\ind_{\lambda K})=\otZ_y(\lambda K)=\sum_{i=0}^n \lambda^i\, \otZ_{y,i}(K)$$
for every $K\in\cK^n$, $y\in\R^n$ and $\lambda \geq 0$, where $0^0:=1$.  
Setting $\lambda:=j$ for every $0\le j\le n$, we therefore obtain
$$
\begin{pmatrix}
\oZ(0\sq (\ell_y+\ind_K)) \\ \vdots \\ \oZ(n\sq (\ell_y+\ind_K))
\end{pmatrix} =
\begin{pmatrix}
0^0 & \cdots & 0^{n}\\
\vdots & \ddots & \vdots\\
n^0 & \cdots & n^n
\end{pmatrix}
\begin{pmatrix}
\otZ_{y,0}(K)\\
\vdots\\
\otZ_{y,n}(K)
\end{pmatrix}
$$
for every $K\in\cK^n$ and $y\in\R^n$. The matrix in the equation is invertible since it is a Vandermonde matrix. Denoting its inverse by $(\alpha_{ij})_{0\leq i,j \leq n}$, we now have
$$\otZ_{y,i}(K)=\sum_{j=0}^n \alpha_{ij} \oZ(j \sq (\ell_y+\ind_K))$$
for $0\leq i \leq n$ and every $K\in\cK^n$ and $y\in\R^n$. Since the coefficients $\alpha_{ij}$ are independent of $K\in\cK^n$ and $y\in\R^n$, we may now define $\oZ_i:\fconvs\to\R$ as
$$\oZ_i(u):=\sum_{j=0}^n \alpha_{ij} \oZ(j\sq u)$$
for every $0\leq i \leq n$. It easily follows from the properties of $\oZ$ that also the functionals $\oZ_i$ for $0\leq i\leq n$ are continuous and epi-translation invariant valuations.

We now have
$$
\oZ_i(\ell_y+\ind_K)= \sum_{j=0}^n \alpha_{ij} \oZ(j\sq (\ell_y+\ind_{K}))= \otZ_{y,i}(K)
$$
and thus
$$\oZ_i(\lambda\sq(\ell_y+\ind_K))=\oZ_i(\ell_y+\ind_{\lambda K})=\otZ_{y,i}(\lambda K)= \lambda^i\, \otZ_{y,i}(K) = \lambda^i \oZ_i(\ell_y + \ind_K)$$
for every $0\leq i\leq n$, $K\in\cK^n$, $y\in\R^n$ and $\lambda>0$. Furthermore,
$$\oZ(\ell_y+\ind_K)=\otZ_y(K)=\sum_{i=0}^n \otZ_{y,i}(K) = \sum_{i=0}^n \oZ_i(\ell_y+\ind_K)$$
for every $K\in\cK^n$ and $y\in\R^n$, which completes the proof.
\end{proof}

We will now show that every continuous, epi-translation invariant valuation on $\fconvs$ is already determined by its values on a particular small set of functions.

\begin{lemma}
\label{le:reduction_2}
Let $\,\oZ:\fconvs\to\R$ be a continuous, epi-translation invariant valuation. If
\begin{equation}
\label{eq:oz_affine_zero}
\oZ(\ell_y+\ind_P)=0
\end{equation}
for every $y\in\R^n$ and $P\in\cP^n$, then $\oZ(u)=0$ for every $u\in\fconvs$.
\end{lemma}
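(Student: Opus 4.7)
The plan is to prove that $\oZ$ vanishes first on a dense subclass $\cS \subset \fconvs$ of piecewise-affine convex functions with polytopal domain, and then to conclude by the continuity of $\oZ$. I let $\cS$ consist of all $v \in \fconvs$ of the form
$$v = \bigl(\max_{i=1}^m w_i\bigr) + \ind_P$$
with $w_1,\dots,w_m$ affine functions on $\R^n$ and $P \in \cP^n$, and would establish $\oZ(v) = 0$ for every $v \in \cS$ by induction on $m$.

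The base case $m = 1$ is immediate: writing $w_1 = \ell_y + c$, epi-translation invariance and hypothesis \eqref{eq:oz_affine_zero} give $\oZ(w_1 + \ind_P) = \oZ(\ell_y + \ind_P) = 0$. For the induction step $m \geq 2$, after removing any dominated affine piece I may assume the representation is irredundant, so that each $w_i$ strictly dominates the others on some open subset of $P$. Choosing two distinct $w_1, w_m$ I cut $P$ along $H := \{w_1 = w_m\}$, obtaining $n$-dimensional polytopes $P^+ := P \cap \{w_1 \geq w_m\}$ and $P^- := P \cap \{w_m \geq w_1\}$. Because $w_m \leq w_1$ on $P^+$, the functions
$$v^+ := \bigl(\max_{1 \leq i \leq m-1} w_i\bigr) + \ind_{P^+}, \qquad v^- := \bigl(\max_{2 \leq i \leq m} w_i\bigr) + \ind_{P^-}$$
both lie in $\cS$ and involve only $m-1$ affine pieces. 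A direct pointwise computation gives $v^+ \wedge v^- = v$ (so convexity of the meet is automatic) and $v^+ \vee v^- = (\max_{1 \leq i \leq m-1} w_i) + \ind_{P \cap H}$, which involves at most $m-1$ affine functions over the polytope $P \cap H \in \cP^n$. The valuation identity $\oZ(v) = \oZ(v^+) + \oZ(v^-) - \oZ(v^+ \vee v^-)$ together with the induction hypothesis yields $\oZ(v) = 0$.

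For density of $\cS$ in $\fconvs$, I would exploit the Legendre duality in \eqref{eq:fconvs_start_fconvf}: for $u \in \fconvs$ the conjugate $u^*$ is finite-valued on $\R^n$, so for a countable dense set $\{y_i\} \subset \R^n$ the functions $\tilde u_k := \max_{i \leq k}\bigl(\ell_{y_i} - u^*(y_i)\bigr)$ increase pointwise to $u = u^{**}$. Taking polytopes $P_k \nearrow \R^n$ and setting $u_k := \tilde u_k + \ind_{P_k}$ produces a sequence in $\cS$. For $t \neq \min u$, I verify the sublevel set convergence required by Definition~\ref{def:epi-conv_lvl_sets}: if $t < \min u$ the sets $\{u_k \leq t\}$ are eventually empty since $\min \tilde u_k \to \min u$, while if $t > \min u$ the compact set $\{u \leq t\}$ is eventually contained in $P_k$ and $\{\tilde u_k \leq t\}$ decreases to $\{u \leq t\}$, yielding Hausdorff convergence. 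Hence $u_k \to u$ in $\fconvs$, and continuity of $\oZ$ gives $\oZ(u) = \lim_k \oZ(u_k) = 0$.

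The main obstacle will be making the induction close in the first step: one must check that the split by $H$ strictly reduces the number of affine pieces in each of $v^\pm$ and $v^+ \vee v^-$, which in turn relies on the base case applying to the lower-dimensional polytope $P \cap H$ (this is the reason the hypothesis is formulated for all $P \in \cP^n$, not only full-dimensional ones). The density argument is standard in spirit but requires careful verification of level set convergence in the sense of Definition~\ref{def:epi-conv_lvl_sets}, not merely pointwise convergence of $\tilde u_k$ to $u$.
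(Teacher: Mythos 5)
Your proof is correct, but it proceeds by a genuinely different induction than the paper's. The class of test functions is the same in substance (piecewise affine convex functions with compact polytopal domain), but you represent them as $v=\bigl(\max_{i=1}^m w_i\bigr)+\ind_P$ over a \emph{single} polytope and induct on the number of affine pieces, cutting along the hyperplane $H=\{w_1=w_m\}$. The paper's first approach represents the same functions as $u=\bigwedge_{i=1}^m(w_i+\ind_{P_i})$ over polytopes with pairwise disjoint interiors and inducts on the number of polytope pieces, splitting the index set $\{1,\dots,m\}$ into $I_1,I_2$ so that $\bigcup_{I_1}P_i$ and $\bigcup_{I_2}P_i$ are convex (a combinatorial fact the paper delegates to Cavallina--Colesanti, Section 7.1). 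Your hyperplane cut is arguably more concrete and avoids that auxiliary combinatorial step; the price is the bookkeeping you already identified (irredundancy, and ensuring that $v^\pm$ and $v^+\vee v^-=(\max_{1\le i\le m-1}w_i)+\ind_{P\cap H}$ each have at most $m-1$ pieces), which you handle correctly. The paper also records a second, even shorter proof via the inclusion--exclusion principle $\bar{\oZ}(u_1\wedge\cdots\wedge u_m)=\sum_{\varnothing\neq I}(-1)^{|I|-1}\bar{\oZ}\bigl(\bigvee_{i\in I}u_i\bigr)$, which sidesteps any induction at all: every $\bigvee_{i\in I}(w_i+\ind_{P_i})$ is itself an affine function plus a polytope indicator, so the hypothesis applies termwise. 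Your density argument through Legendre conjugates and truncation with $P_k\nearrow\R^n$ is sound; the only spot that deserves a sentence more of care is the claim $\min\tilde u_k\to\min u$ used when $t<\min u$, which follows because $\tilde u_k\nearrow u$ implies epi-convergence, hence $(\tilde u_k)^*\to u^*$ pointwise on $\R^n=\operatorname{int}\dom u^*$, and $\min\tilde u_k=-(\tilde u_k)^*(0)$ (and because for $k$ large enough $0$ lies in the interior of $\conv\{y_1,\dots,y_k\}$, so $\tilde u_k$ is coercive and $\{\tilde u_k\le t\}$ becomes compact).
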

\noindent
We present two approaches to prove this result. Similar to the proof of Lemma~\ref{le:reduction_1}, it follows from the continuity of $\oZ$ that we may reduce to the case that $u\in\fconvs$ is piecewise affine. Here, this means that there exist polytopes $P_1,\ldots,P_m\in\cP^n$ with pairwise disjoint interiors and affine functions $w_1,\ldots,w_m:\R^n\to\R$ such that
\begin{equation}
\label{eq:piecewise_affine}
u=\bigwedge_{i=1}^m (w_i+\ind_{P_i}),
\end{equation}
that is, $u$ is a piecewise minimum of affine functions restricted to disjoint polytopes.

\begin{proof}[Proof (First approach)]
We prove the result by induction on the number $m$ in \eqref{eq:piecewise_affine}. We start with the case $m=1$. Since there exist $y_1\in\R^n$ and $t\in\R$ such that $w_1=\ell_y+t$, we have $u=\ell_{y_1}+\ind_{P_1}+t$ and thus, by the epi-translation invariance of $\oZ$, it follows from \eqref{eq:oz_affine_zero} that $\oZ(u)=0$.

Assume that the statement is true for $m-1$ and let $u$ have $m$ components in the representation \eqref{eq:piecewise_affine}.
Without loss of generality, we may assume that there exist disjoint index sets $I_1,I_2\subset \{1,\ldots,m\}$ such that $0<|I_1|,|I_2|<m$ and such that the sets
$$\bigcup_{i\in I_1} P_i \quad \text{and}\quad  \bigcup_{i\in I_2} P_i$$
are convex (for a more detailed discussion of such a partition, we refer to \cite[Section~7.1]{Cavallina:Colesanti}).
Let $u_1,u_2\in\fconvs$ be defined as
$$u_1:=\bigwedge_{i\in I_1} (w_i+\ind_{P_i})\quad \text{and} \quad u_2:=\bigwedge_{i\in I_2} (w_i+\ind_{P_i}).$$
Clearly, $u=u_1\wedge u_2$ and, by our induction assumption, $\oZ(u_1)=\oZ(u_2)=0$. Furthermore, it is easy to see that if $\bar{u}:=u_1\vee u_2$, then there exist polytopes $\bar{P}_1,\ldots,\bar{P}_k\in\cP^n$ (which all have to be at most $(n-1)$-dimensional) with $k\leq m-1$ and affine functions $\bar{w}_1,\ldots,\bar{w}_k$ such that
$$\bar{u}=\bigwedge_{i=1}^k (\bar{w}_i+\ind_{\bar{P}_i}).$$
Using the induction assumption again, we see that also $\oZ(\bar{u})=0$. Thus, by the valuation property of $\oZ$,
$$\oZ(u)=\oZ(u_1)+\oZ(u_2)-\oZ(\bar{u})=0,$$
which completes the proof.
\end{proof}
\begin{proof}[Proof (Second approach)]
Similar to \eqref{eq_in_ex}, 
for every continuous valuation $\bar{\oZ}:\fconvs\to \R$ and $u_1,\ldots u_m\in\fconvs$ such that $u_1\wedge \ldots \wedge u_m\in\fconvs$, we have
$$\bar{\oZ}(u_1\wedge \cdots \wedge u_m)=\sum_{\varnothing \neq I \subset \{1,\ldots,m\}} (-1)^{|I|-1}\bar{\oZ}(u_I),$$
where $u_I:=\bigvee_{i\in I} u_i$. Thus, in order to show that $\oZ$ vanishes on functions of the form \eqref{eq:piecewise_affine}, it suffices to show that
$$\oZ\Big( \bigvee_{i\in I} (w_i+\ind_{P_i}) \Big) =0$$
for every $\varnothing \neq I \subset\{1,\ldots,m\}$. Since every such function $\bigvee_{i\in I} (w_i+\ind_{P_i})$ is again an affine function restricted to a polytope, the statement follows from \eqref{eq:oz_affine_zero} and the vertical translation invariance of $\oZ$.
\end{proof}

We now have all ingredients to prove the homogeneous decomposition theorem.
\begin{proof}[Proof of Theorem~\ref{thm:mcmullen_fcts}]
Let the valuations $\oZ_0,\ldots,\oZ_n:\fconvs\to\R$ be given by Lemma~\ref{le:z_i} and define $\bar{\oZ}:\fconvs\to\R$ as
$$\bar{\oZ}(u):=\oZ(u)-\sum_{i=0}^n \oZ_i(u).$$
Clearly, $\bar{\oZ}$ is a continuous and epi-translation invariant valuation. Furthermore, by the properties of the valuations $\oZ_i$ for $0\leq i \leq n$, we have
$$\bar{\oZ}(\ell_y+\ind_P)=0$$
for every polytope $P\in\cP^n$ and $y\in\R^n$. Thus, by Lemma~\ref{le:reduction_2},
$$\oZ(u)=\sum_{i=0}^n \oZ_i(u)$$
for every $u\in\fconvs$.

It remains to show that the valuation $\oZ_i$ is epi-homogeneous of degree $i$ for each $0\leq i\leq n$. For $\lambda \geq 0$ and $0\leq i\leq n$, set
$$\bar{\oZ}_{\lambda,i}(u)=\oZ_i(\lambda\sq u)-\lambda^i \oZ_i(u)$$
for $u\in\fconvs$. Note that $\bar{\oZ}_{\lambda,i}$ is a valuation on $\fconvs$. Using the same arguments as above, we obtain that $\bar{\oZ}_{\lambda,i}\equiv 0$, which shows that for each $0\leq i\leq n$, the valuation $\oZ_i$ is epi-homogeneous of degree $i$.
\end{proof}

\goodbreak
With an approach similar to that for Theorem~\ref{thm_poly_sc}, we obtain the following result by considering $u\mapsto \oZ_i(u \infconv \bar{u})$ for fixed $\bar{u}\in\fconvs$.

\begin{theorem}
Let $1\leq m\leq n$. If $\oZ\colon\fconvs\to\R$ is a continuous, epi-translation invariant valuation that is 
epi-homogeneous of degree $m$, then there is a symmetric function $\obZ:(\fconvs)^m\to\R$ such that 
$$
\oZ(\lambda_1 \sq u_1 \infconv \cdots \infconv \lambda_k \sq u_k) = \sum_{\substack{i_1,\ldots, i_k\in \{0,\ldots,m\}\\i_1+\cdots+ i_k = m}} \binom{m}{i_1 \cdots i_k} \lambda_1^{i_1} \cdots \lambda_k^{i_k} \obZ(u_1 [i_1],\ldots,u_k [i_k])
$$
for every $k\ge 1$, every $u_1,\ldots,u_k\in\fconvs$ and every $\lambda_1,\ldots,\lambda_k\geq 0$.
Moreover, $\obZ$ is epi-additive in each variable and the map 
\[u\mapsto\obZ (u[j],u_{1},\ldots,u_{m-j})\]
is a continuous, epi-translation invariant valuation on $\fconvs$ that is epi-homogeneous of degree~$j$ for $1\le j\le m$ and every $u_{1}, \dots, u_{m-j}\in\fconvs$.
\end{theorem}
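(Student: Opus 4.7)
The plan is to mimic the classical polarization argument yielding Theorem~\ref{thm_poly_sc}, with infimal convolution replacing Minkowski addition. The preliminary ingredient is the functional analog of Lemma~\ref{lem_val_mink_add}: for every fixed $\bar u \in \fconvs$, the map $v \mapsto \oZ(v \infconv \bar u)$ is again a continuous, epi-translation invariant valuation on $\fconvs$. This follows by running the proof of Lemma~\ref{lem_val_mink_add} on the epi-graphs in $\R^{n+1}$, using the identities $\epi(v \infconv \bar u) = \epi v + \epi \bar u$, $\epi(v \vee w) = \epi v \cap \epi w$, and $\epi(v \wedge w) = \epi v \cup \epi w$; continuity transfers from that of $\oZ$ by standard stability of infimal convolution under epi-convergence on $\fconvs$.

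Step 1 (polynomial structure). For fixed $u_1, \dots, u_k \in \fconvs$, set
\[W(\lambda_1, \dots, \lambda_k) := \oZ(\lambda_1 \sq u_1 \infconv \cdots \infconv \lambda_k \sq u_k)\]
for $\lambda_i \ge 0$. I claim $W$ is a homogeneous polynomial of total degree $m$. More generally, for any fixed $\bar w, w_1, \dots, w_\ell \in \fconvs$, the function $(\mu_1, \dots, \mu_\ell) \mapsto \oZ(\bar w \infconv \mu_1 \sq w_1 \infconv \cdots \infconv \mu_\ell \sq w_\ell)$ is polynomial, which I prove by induction on $\ell$. For the step, apply Theorem~\ref{thm:mcmullen_fcts} to the valuation $v \mapsto \oZ(\bar w \infconv v \infconv \mu_2 \sq w_2 \infconv \cdots \infconv \mu_\ell \sq w_\ell)$ (with $\mu_2, \dots, \mu_\ell$ held fixed) to obtain a decomposition into at most $n+1$ epi-homogeneous components; evaluation at $v = \mu_1 \sq w_1$ yields polynomial dependence on $\mu_1$ of degree at most $n$. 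The Vandermonde formula from the proof of Theorem~\ref{thm:mcmullen_fcts} writes the coefficient of $\mu_1^j$ as a finite linear combination of values $\oZ((\bar w \infconv i \sq w_1) \infconv \mu_2 \sq w_2 \infconv \cdots \infconv \mu_\ell \sq w_\ell)$ with $0 \le i \le n$; absorbing $\bar w \infconv i \sq w_1$ into the fixed slot, the induction hypothesis handles dependence on $\mu_2, \dots, \mu_\ell$. Taking $\bar w = \ind_{\{0\}}$, which is neutral for $\infconv$, recovers polynomiality of $W$. Since $\oZ$ is epi-$m$-homogeneous and $t \sq (\lambda_1 \sq u_1 \infconv \cdots \infconv \lambda_k \sq u_k) = t\lambda_1 \sq u_1 \infconv \cdots \infconv t\lambda_k \sq u_k$, one has $W(t\lambda_1, \dots, t\lambda_k) = t^m W(\lambda_1, \dots, \lambda_k)$; combined with the per-variable polynomial degree bound, this forces $W$ to be a homogeneous polynomial of total degree exactly $m$.

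Step 2 (definition of $\obZ$ and multinomial identity). Expand
\[W(\lambda_1, \dots, \lambda_k) = \sum_{\substack{i_1 + \cdots + i_k = m \\ i_j \ge 0}} \binom{m}{i_1 \cdots i_k}\lambda_1^{i_1} \cdots \lambda_k^{i_k}\, c(u_1[i_1], \dots, u_k[i_k]),\]
with uniquely determined symmetric coefficients $c$, and define $\obZ : (\fconvs)^m \to \R$ by $\obZ(u_1, \dots, u_m) := c(u_1, \dots, u_m)$ (the coefficient at $k = m$ and $i_1 = \cdots = i_m = 1$). Equivalently, $\obZ$ admits the inclusion-exclusion formula
\[\obZ(u_1, \dots, u_m) = \frac{1}{m!}\sum_{\varnothing \ne J \subseteq \{1, \dots, m\}} (-1)^{m-|J|}\, \oZ(u_J),\]
where $u_J$ is the infimal convolution of the $u_j$ with $j \in J$. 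Symmetry of $\obZ$ is immediate from the symmetry of $W$ under simultaneous relabeling of the pairs $(\lambda_i, u_i)$; matching coefficients in the polynomial expansion for general exponents against the polarizations of the corresponding diagonal blocks gives $c(u_1[i_1], \dots, u_k[i_k]) = \obZ(u_1[i_1], \dots, u_k[i_k])$, which is the multinomial identity asserted in the theorem.

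Step 3 (remaining properties and main obstacle). The claim that $u \mapsto \obZ(u[j], u_1, \dots, u_{m-j})$ is a continuous, epi-translation invariant valuation on $\fconvs$ follows from the inclusion-exclusion representation, since each summand $\oZ(u_J)$ inherits these properties from $\oZ$ via the ingredient of paragraph~1. Epi-$j$-homogeneity in the diagonal slot follows by substituting $\lambda \sq u$ for $u$ into the polynomial expansion and extracting the coefficient of $\lambda^j$. For epi-additivity of $\obZ$ in a single argument, i.e.\ $\obZ((w \infconv w'), u_2, \dots, u_m) = \obZ(w, u_2, \dots, u_m) + \obZ(w', u_2, \dots, u_m)$, I apply Step~1 to the $(m+1)$-variable polynomial $\oZ(\mu \sq w \infconv \nu \sq w' \infconv \lambda_2 \sq u_2 \infconv \cdots \infconv \lambda_m \sq u_m)$, specialize to $\mu = \nu = \lambda_1$, and compare the coefficients of $\lambda_1 \lambda_2 \cdots \lambda_m$ on both sides. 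The main obstacle is Step~1: Theorem~\ref{thm:mcmullen_fcts} supplies only a single-variable decomposition, and propagating polynomiality to all $k$ variables relies on the explicit Vandermonde expression for the homogeneous pieces together with the induction on the number of varying parameters outlined above.
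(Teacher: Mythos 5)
Your proposal is correct and follows exactly the route the paper sketches: obtain the infimal-convolution analog of Lemma~\ref{lem_val_mink_add} at the level of epi-graphs, iterate Theorem~\ref{thm:mcmullen_fcts} together with the Vandermonde extraction to get polynomiality of $\lambda\mapsto\oZ(\lambda_1\sq u_1\infconv\cdots\infconv\lambda_k\sq u_k)$, and then polarize. This is the same strategy the paper indicates by the remark "considering $u\mapsto\oZ_i(u\infconv\bar u)$" in analogy with Theorem~\ref{thm_poly_sc}.
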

\noindent
Here, a function $\oY: \fconvs\to \R$ is called \emph{epi-additive} if
$$\oY(u\infconv v)= \oY(u)+\oY(v)$$
for every $u,v\in\fconvs$. 
The special case $m=1$ in the previous theorem leads to the following result, which is a functional version of Corollary~\ref{cor_add}.

\begin{corollary}
If $\,\oZ\colon\fconvs\to\R$ is a continuous, epi-translation invariant valuation that is epi-homogeneous of degree 1, then $\oZ$ is epi-additive. 
\end{corollary}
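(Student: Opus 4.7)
My plan is to deduce this corollary as an immediate specialization of the preceding polynomial expansion theorem to the case $m = 1$. The observation driving the argument is that when $m = 1$, the multi-indices $(i_1, \ldots, i_k) \in \{0, 1\}^k$ whose entries sum to one are exactly the standard basis vectors $e_1, \ldots, e_k$, and each carries multinomial coefficient $\binom{1}{i_1 \cdots i_k} = 1$. Hence the expansion stated in the theorem collapses to
\[
\oZ(\lambda_1 \sq u_1 \infconv \cdots \infconv \lambda_k \sq u_k) \;=\; \sum_{j=1}^k \lambda_j\, \obZ(u_j)
\]
for every $k \ge 1$, $u_1, \ldots, u_k \in \fconvs$ and $\lambda_1, \ldots, \lambda_k \ge 0$.

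Next, I would identify the symmetric function $\obZ$ with $\oZ$: setting $k = 1$ and $\lambda_1 = 1$ in the identity above yields $\obZ(u) = \oZ(u)$ for every $u \in \fconvs$. Epi-additivity of $\oZ$ then drops out by choosing $k = 2$, $\lambda_1 = \lambda_2 = 1$, $u_1 = u$ and $u_2 = v$, which gives
\[
\oZ(u \infconv v) \;=\; \obZ(u) + \obZ(v) \;=\; \oZ(u) + \oZ(v).
\]
Equivalently, the same conclusion can be read off directly from the final clause of the theorem: $\obZ$ is asserted to be epi-additive in each of its variables, and for $m = 1$ that assertion coincides verbatim, under the identification $\obZ = \oZ$, with the statement that $\oZ$ is epi-additive.

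Since the corollary is thus a purely formal extraction of the degree-one case of the polynomial expansion theorem, I expect no real obstacle here. All the substantive analytic content is absorbed into the proof of the preceding theorem, which itself rests on the homogeneous decomposition theorem (Theorem~\ref{thm:mcmullen_fcts}) together with the polarization procedure modelled on Theorem~\ref{thm_poly_sc} applied via repeated use of Lemma~\ref{lem_val_mink_add} in its functional form $u \mapsto \oZ_j(u \infconv \bar u)$.
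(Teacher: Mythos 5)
Your proposal is correct and follows exactly the route the paper intends: the corollary is the $m=1$ specialization of the preceding polynomial expansion theorem, with $\obZ$ identified with $\oZ$ via the case $k=1$, $\lambda_1=1$, and epi-additivity read off either from $k=2$, $\lambda_1=\lambda_2=1$ or directly from the asserted epi-additivity of $\obZ$ in each variable. Nothing to add.
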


\subsection{A Classification Result}
In this section, we will show that the valuations described in Lemma~\ref{le:int_grad_is_a_val} are indeed the only continuous and epi-translation invariant valuations on $\fconvs$ which are epi-homogeneous of degree $n$. The following result was established in \cite[Theorem~2]{Colesanti-Ludwig-Mussnig-4}.

\begin{theorem}
A map $\oZ:\fconvs\to\R$ is a continuous and epi-translation invariant valuation that is epi-homogeneous of degree $n$, if and only if there exists $\zeta\in C_c(\R^n)$ such that
\begin{equation}
\label{eq:oz_int_grad}
\oZ(u)= \int_{\dom u} \zeta(\nabla u(x))\,\D x
\end{equation}
for every $u\in\fconvs$. 
\label{thm:class_n-hom_fconvs}
\end{theorem}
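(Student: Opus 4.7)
The plan is to follow the standard recipe for such classification results: establish the ``if'' direction by direct computation, reduce the ``only if'' direction to analysing the valuation on the family $\{\ell_y+\ind_K\}$, extract a candidate representing function $\zeta$ using Hadwiger's theorem, and then use the reduction lemma to push the representation from this special family to all of $\fconvs$.

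For the ``if'' direction, I note that $\nabla(\lambda\sq u)(x)=\nabla u(x/\lambda)$, so a change of variables in \eqref{eq:oz_int_grad} gives epi-$n$-homogeneity; the remaining properties come from Lemma~\ref{le:int_grad_is_a_val}. For the converse, fix a continuous, epi-translation invariant, epi-$n$-homogeneous valuation $\oZ\colon\fconvs\to\R$. By Lemma~\ref{le:z_y_is_a_val}, $K\mapsto\oZ(\ell_y+\ind_K)$ is a continuous, translation invariant valuation on $\cK^n$ for every $y\in\R^n$. Since a direct computation gives $\lambda\sq(\ell_y+\ind_K)=\ell_y+\ind_{\lambda K}$, this valuation is additionally $n$-homogeneous. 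Hadwiger's Theorem~\ref{thm_hugo_n} will therefore supply a unique number $\zeta(y)\in\R$ with
\[\oZ(\ell_y+\ind_K)=\zeta(y)\,V_n(K)\qquad \text{for every }K\in\cK^n,\]
and continuity of $\zeta$ is immediate from continuity of $\oZ$ applied to sequences $y_k\to y$ with $K$ fixed.

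The main obstacle is to show $\zeta\in C_c(\R^n)$. My plan is to first prove $\zeta\in C_0(\R^n)$ by the following limit argument. Given any sequence with $|y_k|\to\infty$, pass to a subsequence along which $y_k/|y_k|\to\omega\in\sn$ and set $\tilde u_k:=\ell_{y_k}+\ind_{B^n}+|y_k|$, which by epi-translation invariance satisfies $\oZ(\tilde u_k)=\zeta(y_k)\,\kappa_n$. Since the minimum of $\tilde u_k$ on $B^n$ is attained at $-y_k/|y_k|\to-\omega$ with value $0$, while $\tilde u_k(x)=|y_k|(1+\langle y_k/|y_k|,x\rangle)$ tends to $+\infty$ for every other $x\in B^n$, one checks that $\tilde u_k$ epi-converges to $\ind_{\{-\omega\}}$. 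Continuity and translation invariance of $\oZ$ then give $\zeta(y_k)\kappa_n\to\oZ(\ind_{\{0\}})$. Repeating the argument with $B^n$ replaced by $rB^n$ for a second value of $r>0$ forces both $\oZ(\ind_{\{0\}})=0$ and $\zeta(y_k)\to 0$, whence $\zeta\in C_0(\R^n)$. Upgrading $C_0$ to $C_c$ — ruling out slowly decaying tails — is the hard part and will require a more delicate construction of epi-convergent sequences in $\fconvs$ probing $\zeta$ at infinity, exploiting the full interplay of super-coercivity, epi-$n$-homogeneity, and continuity of $\oZ$.

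With $\zeta\in C_c(\R^n)$ in hand, the remainder is routine. By Lemma~\ref{le:int_grad_is_a_val} the functional $\bar\oZ(u):=\int_{\dom u}\zeta(\nabla u(x))\,\D x$ is a continuous and epi-translation invariant valuation on $\fconvs$, and by the ``if'' direction it is epi-$n$-homogeneous. For $u=\ell_y+\ind_P$ with $P\in\cP^n$, $\nabla u\equiv y$ on the interior of $P$, so $\bar\oZ(\ell_y+\ind_P)=\zeta(y)V_n(P)=\oZ(\ell_y+\ind_P)$. Lemma~\ref{le:reduction_2} applied to the continuous, epi-translation invariant valuation $\oZ-\bar\oZ$, which vanishes on every $\ell_y+\ind_P$, will then give $\oZ=\bar\oZ$ on all of $\fconvs$, completing the proof.
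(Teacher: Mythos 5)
Your overall architecture matches the paper's proof exactly: use Lemma~\ref{le:z_y_is_a_val} together with Hadwiger's Theorem~\ref{thm_hugo_n} to extract $\zeta$ from $\otZ_y(K)=\zeta(y)V_n(K)$, prove $\zeta\in C_c(\R^n)$, and then finish with Lemma~\ref{le:int_grad_is_a_val} and the reduction Lemma~\ref{le:reduction_2}. Your ``if'' direction and the final reduction step are both correct and identical to the paper's. The continuity of $\zeta$ is also handled in the same way.

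However, you have a genuine gap at exactly the place you flag: you establish only $\zeta\in C_0(\R^n)$ and openly defer the upgrade to $\zeta\in C_c(\R^n)$ to ``a more delicate construction.'' Your limit argument with $\tilde u_k=\ell_{y_k}+\ind_{B^n}+|y_k|$ (and its rescaled variant with $rB^n$) is fine as far as it goes: the sublevel sets of $\tilde u_k$ are caps of $B^n$ that shrink to a single point, so $\tilde u_k\to\ind_{\{-\omega\}}$ and indeed $\zeta(y_k)\to 0$. But since the test bodies in your family are fixed in size, this construction can never distinguish between $\zeta$ decaying and $\zeta$ vanishing outside a compact set; letting the radius of the balls grow along the sequence would cause the sublevel sets of $\tilde u_k$ to run off to infinity and destroy the epi-convergence. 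The paper's trick is different: it uses bodies whose size is \emph{inversely coupled to $\zeta(y_k)$}. Concretely (arguing by contradiction with $|y_k|\to\infty$ and WLOG $\zeta(y_k)>0$), set $B_k:=\{x\in y_k^\perp:|x|\le 1\}$ and
\[
C_k:=\Bigl\{x+t\,\tfrac{y_k}{|y_k|}:x\in B_k,\ t\in\bigl[0,\tfrac{1}{\zeta(y_k)}\bigr]\Bigr\},
\]
so that $V_n(C_k)=\kappa_{n-1}/\zeta(y_k)$ and hence $\oZ(\ell_{y_k}+\ind_{C_k})=\zeta(y_k)V_n(C_k)=\kappa_{n-1}$ is constant in $k$. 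The short direction of the cylinder is aligned with $y_k$, so for every $s>0$ the sublevel set $\{\ell_{y_k}+\ind_{C_k}\le s\}$ is a slab of $C_k$ of $y_k$-thickness at most $s/|y_k|\to 0$, which converges to the $(n-1)$-dimensional disc $B_\infty\subset e^\perp$ regardless of how large $1/\zeta(y_k)$ is. Thus $\ell_{y_k}+\ind_{C_k}\to\ind_{B_\infty}$ and continuity gives $\kappa_{n-1}=\lim\oZ(\ell_{y_k}+\ind_{C_k})=\oZ(\ind_{B_\infty})=\zeta(0)V_n(B_\infty)=0$, a contradiction. This normalization of the body's volume by $1/\zeta(y_k)$, keeping $\oZ$ pinned at a fixed nonzero value while the functions still epi-converge, is precisely the idea missing from your argument.
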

\begin{proof}
For given $\zeta\in C_c(\R^n)$, it follows from Lemma~\ref{le:int_grad_is_a_val} that \eqref{eq:oz_int_grad} has the desired properties.

Conversely, let $\oZ:\fconvs\to\R$ be a continuous, epi-translation invariant valuation that is epi-homogeneous of degree $n$. For $y\in\R^n$, let $\otZ_y:\cK^n\to\R$ be defined by $\otZ_y(K):=\oZ(\ell_y+\ind_K)$. By Lemma~\ref{le:z_y_is_a_val}, the functional $\otZ_y$ is a continuous and translation invariant valuation. In addition, it is easy to see that $\otZ_y$ is homogeneous of degree $n$. Thus, it follows from Theorem~\ref{thm_hugo_n} that for every $y\in\R^n$ there exists a constant $\zeta(y)\in\R$ such that
\begin{equation}
\label{eq:otz_y_zeta_y_vn}
\oZ(\ell_y+\ind_K)=\otZ_y(K)=\zeta(y) V_n(K)
\end{equation}
for every $K\in\cK^n$. Furthermore, it follows from the continuity of $\oZ$ that $\zeta(y)$ continuously depends on $y\in\R^n$. This defines a continuous function $\zeta:\R^n\to\R$.

Next, we show that $\zeta$ has compact support. Assume on the contrary that there exists a sequence $y_k\in \R^n$ with
\begin{equation}
\label{eq:norm_y_k_infty}
\lim_{k\to\infty} |y_k|=+\infty
\end{equation}
but $\zeta(y_k)\neq 0$ for every $k\in\N$. By possibly restricting to a subsequence, we may assume without loss of generality that $\zeta(y_k)$ is positive for every $k\in\N$ and that there exists a vector $e\in\sn$ such that
$$\lim_{k\to\infty} \frac{y_k}{|y_k|}=e.$$
Let $B_k, B_\infty\in\cK^n$ be given by
$$B_k=\{x\in y_k^\perp\colon |x|\leq 1\},\quad B_\infty=\{x\in e^\perp\colon |x|\leq 1\}$$
and let $C_k\in\cK^n$ be defined as
\begin{equation}
\label{eq:def_c_k}
C_k=\Big\{x+t \frac{y_k}{|y_k|} \colon x\in B_k, t\in \Big[0,\frac{1}{\zeta(y_k)} \Big]\Big\}
\end{equation}
for $k\in\N$.
Observe that $C_k$ is an orthogonal cylinder and that
\begin{equation}
\label{eq:vn_ck}
V_n(C_k)= V_{n-1}(B_k) \frac{1}{\zeta(y_k)} = \frac{\kappa_{n-1}}{\zeta(y_k)}
\end{equation}
for $k\in\N$. Next, set $u_k:=\ell_{y_k}+\ind_{C_k}$ for $k\in\N$ and note that $u_k\in\fconvs$. It follows from \eqref{eq:norm_y_k_infty} and \eqref{eq:def_c_k} that
$u_k\to \ind_{B_\infty}$
as $k\to\infty$. Thus, the continuity of $\oZ$ combined with \eqref{eq:otz_y_zeta_y_vn} implies that
$$0=\oZ(\ind_{B_\infty})=\lim_{k\to\infty}\oZ(u_k).$$
On the other hand, it follows from \eqref{eq:otz_y_zeta_y_vn} and \eqref{eq:vn_ck} that
$$\oZ(u_k)=\zeta(y_k) V_n(C_k) = \kappa_{n-1}>0$$
for every $k\in\N$, which is a contradiction. Hence, we conclude that $\zeta$ has compact support.

It remains to show that \eqref{eq:oz_int_grad} holds. Define $\bar{\oZ}:\fconvs\to\R$ as
$$\bar{\oZ}(u):=\oZ(u)-\int_{\dom u} \zeta(\nabla u(x)) \,\D x.$$
By Lemma \ref{le:int_grad_is_a_val} and our assumptions on $\oZ$, the operator $\bar{\oZ}$ is a continuous and epi-translation invariant valuation. Furthermore, it follows from \eqref{eq:otz_y_zeta_y_vn} that
$$\bar{\oZ}(\ell_y+\ind_K) = \oZ_y(K)-\int_K \zeta(y) \, \D x = 0$$
for every $y\in\R^n$ and $K\in\cK^n$. Thus, Lemma~\ref{le:reduction_2} implies that
$$\bar{\oZ}(u)=0$$
for every $u\in\fconvs$, which completes the proof.
\end{proof}
  
\subsection{A Glimpse at the Current State of Research}
\label{sec:current}
As pointed out in \eqref{eq:int_grad_vol}, for any $\zeta\in C_c(\R^n)$, the operator
$$u\mapsto \int_{\dom u} \zeta(\nabla u(x))\,\D x$$
can be seen as a functional analog of the $n$-dimensional volume on $\fconvs$. This interpretation is further supported by Theorem~\ref{thm:class_n-hom_fconvs}, which (up to the assumption of continuity)  is a functional version of Theorem~\ref{thm_hugo_n}. In the following, we restrict to the rotation invariant case, where we say that a valuation $\oZ:\fconvs\to\R$ is \emph{rotation invariant} if
$$\oZ(u\circ \vartheta^{-1}) = \oZ(u)$$
for every $u\in\fconvs$ and $\vartheta\in \son$. Define
$$\oZZb{n}{\alpha}(u):=\int_{\dom u} \alpha(\vert \nabla u(x) \vert ) \,\D x$$
for $u\in \fconvs$ and $\alpha\in C_c([0,\infty))$.

It is a consequence of Theorem~\ref{eq:int_grad_vol} that, for each $0\leq j \leq n-1$, there exists a continuous, epi-translation invariant valuation $\oZZb{j}{\alpha}:\fconvs\to\R$ that is epi-homogeneous of degree $j$ such that
\begin{equation}
\label{eq:functional_steiner_formula}
\oZZb{n}{\alpha}(u\infconv r \sq \ind_{B^n}) = \sum_{j=0}^n r^{n-j}\kappa_{n-j} \oZZb{j}{\alpha}(u)
\end{equation}
for every $u\in\fconvs$ and $r\geq 0$. Observe that \eqref{eq:functional_steiner_formula} corresponds to the classical Steiner formula \eqref{eq_steiner} where we have replaced the $n$-dimensional volume with $\oZZb{n}{\alpha}$ and where now $\ind_{B^n}$ plays the role of the unit ball.

In many ways, the functionals $\oZZb{j}{\alpha}$ behave like the classical intrinsic volumes. First, it follows from the rotation invariance of $\oZZb{n}{\alpha}$ and the radial symmetry of $\ind_{B^n}$ that also $\oZZb{j}{\alpha}$ is rotation invariant for every $0\leq j \leq n-1$. Next, since
$$\ind_{K}\infconv r\sq \ind_{B^n} = \ind_{K + r B^n},$$
it follows from \eqref{eq_steiner}, \eqref{eq:int_grad_vol} and \eqref{eq:functional_steiner_formula} that
$$\oZZb{j}{\alpha}(\ind_K) = \alpha(0) V_j(K)$$
for every $0\leq j\leq n$ and $K\in\cK^n$. Last but not least, the functionals $\oZZb{j}{\alpha}$ are characterized by a Hadwiger-type theorem. The version that is stated here follows from \cite[Theorem 1.3]{Colesanti-Ludwig-Mussnig-5} and \cite[Theorem 1.4]{Colesanti-Ludwig-Mussnig-7}.

\begin{theorem}
Let $n\geq 2$. A functional $\oZ:\fconvs\to\R$ is a continuous, epi-translation and rotation invariant valuation if and only if there are functions $\alpha_0,\ldots,\alpha_n\in C_c([0,\infty))$ such that
$$\oZ(u)=\oZZb{0}{\alpha_0}(u)+\cdots+\oZZb{n}{\alpha_n}(u)$$
for every $u\in\fconvs$.
\label{thm:hadwiger_fcts}
\end{theorem}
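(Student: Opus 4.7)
The plan is to reduce to the top-degree classification by first applying the homogeneous decomposition. The sufficiency direction is essentially routine: each $\oZZb{j}{\alpha_j}$ is defined via \eqref{eq:functional_steiner_formula} as the coefficient of $r^{n-j}\kappa_{n-j}$ in a polynomial whose leading-order piece is $\oZZb{n}{\alpha_j}$, so continuity, the valuation property, and epi-translation invariance descend via Vandermonde inversion (exactly as in the proof of Theorem~\ref{thm_hom_decomp}) from Lemma~\ref{le:int_grad_is_a_val}, while rotation invariance of each $\oZZb{j}{\alpha_j}$ follows because both $\oZZb{n}{\alpha_j}$ and $\ind_{B^n}$ are rotation invariant and infimal convolution commutes with rotations, forcing the whole Steiner polynomial in $r$ to be rotation invariant coefficient-wise.

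For necessity, I would first apply Theorem~\ref{thm:mcmullen_fcts} to split $\oZ=\oZ_0+\cdots+\oZ_n$ with each $\oZ_j$ a continuous, epi-translation invariant, $j$-epi-homogeneous valuation; since that proof realizes $\oZ_j$ as an explicit linear combination $\oZ_j(u)=\sum_k\alpha_{jk}\,\oZ(k\sq u)$ and since $k\sq(u\circ\vartheta^{-1})=(k\sq u)\circ\vartheta^{-1}$ for every $\vartheta\in\son$, each $\oZ_j$ inherits rotation invariance from $\oZ$. In the top degree, Theorem~\ref{thm:class_n-hom_fconvs} yields $\oZ_n(u)=\int_{\dom u}\zeta(\nabla u(x))\,\D x$ for some $\zeta\in C_c(\R^n)$, and the substitution $\nabla(u\circ\vartheta^{-1})(x)=\vartheta\,\nabla u(\vartheta^{-1}x)$ combined with testing against piecewise-affine $u$ (whose gradient image covers a neighborhood of an arbitrary point of $\R^n$) forces $\zeta(\vartheta y)=\zeta(y)$, so $\zeta=\alpha_n(|\cdot|)$ for some $\alpha_n\in C_c([0,\infty))$ and $\oZ_n=\oZZb{n}{\alpha_n}$. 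In the bottom degree, the cited fact that any continuous, epi-translation invariant, $0$-epi-homogeneous valuation is constant lets one match $\oZ_0$ with $\oZZb{0}{\alpha_0}$ by picking $\alpha_0(0)$ equal to that constant (consistency on all of $\fconvs$ follows from \eqref{eq:functional_steiner_formula} applied to $u=\ind_{\{0\}}$ and $0$-epi-homogeneity).

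The main obstacle, and the step that requires the genuinely new input of \cite{Colesanti-Ludwig-Mussnig-5,Colesanti-Ludwig-Mussnig-7}, is the intermediate range $1\le j\le n-1$. My approach would be to polarize $\oZ_j$ into the symmetric, slot-wise epi-additive function $\obZ_j$ produced by the polarization theorem that follows Theorem~\ref{thm:mcmullen_fcts}, and then probe it with copies of $\ind_{B^n}$: for each $0\le i\le j$, the specialization $u\mapsto\obZ_j(u[i],\ind_{B^n}[j-i])$ is continuous, epi-translation and rotation invariant, epi-homogeneous of degree $i$, and is structurally precisely the functional that must appear as the $r^{j-i}$-coefficient in the Steiner-type expansion of $\oZZb{j}{\alpha_j}(u\infconv r\sq\ind_{B^n})$. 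The hard part is to show that rotation invariance, together with epi-additivity in the remaining slot and a representation of $\oZ_j$ against Hessian/Monge--Amp\`ere measures of $u$, forces these coefficient functionals to have the radial integral form $\oZZb{j}{\alpha_j}$ with $\alpha_j\in C_c([0,\infty))$; once this is established for every intermediate $j$, summing the homogeneous components yields the stated representation.
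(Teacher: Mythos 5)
The paper you are working from does not itself prove Theorem~\ref{thm:hadwiger_fcts}: the result is quoted from \cite{Colesanti-Ludwig-Mussnig-5} and \cite{Colesanti-Ludwig-Mussnig-7}, so there is no in-paper argument to compare your proposal against. Judged on its own merits, your skeleton is sound as far as it goes. Sufficiency via the functional Steiner formula~\eqref{eq:functional_steiner_formula} and Vandermonde inversion is correct, and your observation that rotation invariance descends coefficient-wise because $\ind_{B^n}$ is rotation invariant and infimal convolution commutes with the $\son$-action is right. In the necessity direction, the inheritance of rotation invariance by each $\oZ_j$ from the explicit combination $\oZ_j(u)=\sum_k\alpha_{jk}\,\oZ(k\sq u)$ is correct; the top-degree case follows cleanly from Theorem~\ref{thm:class_n-hom_fconvs} together with testing on $u=\ell_y+\ind_K$ to extract $\zeta(\vartheta y)=\zeta(y)$; and the bottom-degree case is fine once one checks that $\oZZb{0}{\alpha_0}$ is indeed the constant $\alpha_0(0)$.

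The genuine gap is exactly where you place it, and your sketch does not close it. Polarizing $\oZ_j$ and specializing with copies of $\ind_{B^n}$ produces functionals $u\mapsto\obZ_j(u[i],\ind_{B^n}[j-i])$ that are again continuous, epi-translation and rotation invariant, $i$-epi-homogeneous valuations, that is, objects of precisely the kind you set out to classify, so there is no induction to climb and no base case to land on. Identifying such a functional with the $r^{j-i}$-coefficient in the Steiner expansion of $\oZZb{j}{\alpha_j}(u\infconv r\sq\ind_{B^n})$ presupposes the representation $\oZ_j=\oZZb{j}{\alpha_j}$ that you are trying to prove, and invoking ``a representation of $\oZ_j$ against Hessian/Monge--Amp\`ere measures'' is circular for the same reason. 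The actual proof in \cite{Colesanti-Ludwig-Mussnig-5,Colesanti-Ludwig-Mussnig-7,Colesanti-Ludwig-Mussnig-8} is of a genuinely different character: it proceeds by a Klain-type induction on dimension using restrictions to lower-dimensional subspaces and functional Cauchy--Kubota formulas, controls a possibly singular density near $0^+$ via an explicit integral transform linking the two parametrizations $\zeta$ and $\alpha$ appearing in \eqref{eq:functional_intrinsic_volume} and in the definition of $\oZZb{j}{\alpha}$, and develops mixed Monge--Amp\`ere measures to make the Steiner-type identity available independently of the classification. None of that machinery appears in your outline, so for $1\le j\le n-1$ your proposal is a description of the target rather than an argument.
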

\noindent
Theorem~\ref{thm:hadwiger_fcts} is a functional analog of the Hadwiger theorem, Theorem~\ref{thm_hugo}, and shows that the valuations $\oZZb{j}{\alpha}$ clearly play the role of the intrinsic volumes on $\fconvs$.

In \cite{Colesanti-Ludwig-Mussnig-5}, a different approach and notation are used. The functionals there take the form
\begin{equation}
\label{eq:functional_intrinsic_volume}
u\mapsto \int_{\R^n} \zeta(|\nabla u(x)|) [\Hess u(x)]_{n-j} \,\D x
\end{equation}
if in addition $u\in C^2(\R^n)$, where $\zeta\colon(0,\infty)\to\R$ has bounded support and might have a certain singularity at $0^+$. It was later shown in \cite[Theorem 1.4]{Colesanti-Ludwig-Mussnig-7} that the continuous extensions of \eqref{eq:functional_intrinsic_volume} to $\fconvs$ coincide with the functionals $\oZZb{j}{\alpha}$, that are considered here, where $\zeta$ and $\alpha$ are connected via an integral transform.

There are many open questions concerning  functional intrinsic volumes and related functionals. Current research topics  include characterization results, particularly for further groups of transformations,  and the program to obtain results in the integral geometry of function spaces and to establish inequalities for the newly defined functionals. We refer to \cite{Alesker:ConvFcts,Knoerr:JFA,Knoerr:JDG, Colesanti-Ludwig-Mussnig-6, Colesanti-Ludwig-Mussnig-7, Colesanti-Ludwig-Mussnig-8} for some recent results.

\subsection*{Acknowledgement}
 M. Ludwig was supported, in part, by the Austrian Science Fund (FWF):  P~34446, and F. Mussnig was supported by the Austrian Science Fund (FWF): J~4490.

\bigskip

\end{document}